\newcommand{\glp}{{\ensuremath{\textup{\textbf{GLP}}}}\xspace}
\newcommand{\pair}[1]{{\ensuremath{\langle #1 \rangle}}\xspace}
\theoremstyle{plain}
\newtheorem{theorem}{Theorem}[section]
 \newtheorem{lemma}[theorem]{Lemma}
 \newtheorem{conjecture}[theorem]{Conjecture}
 \newtheorem{corollary}[theorem]{Corollary}
 \newtheorem{corol}[theorem]{Corollary}
 \newtheorem{fact}[theorem]{Fact}
 \newtheorem{remark}[theorem]{Remark}
  \newtheorem{pro}[theorem]{Proposition}
 \newtheorem{definition}[theorem]{Definition}
\newcommand{\s}{{\ensuremath {\tt S}}\xspace}
\newcommand{\extil}[1]{\ensuremath{\textup{\textbf{IL}}{\sf\ensuremath{#1}}}\xspace}
\newcommand{\ilw}{\extil{W}}
\newcommand{\restintl}[2]{{\ensuremath {\textup{\textbf{IL}}}_{#2}({\rm #1})}}
\newcommand{\pra}{\ensuremath{{\mathrm{PRA}}}\xspace}
\newcommand{\il}{{\ensuremath{\textup{\textbf{IL}}}}\xspace}
\newcommand{\formil}{\ensuremath{{\sf Form}_{\il}} \xspace}
\newcommand{\gl}{{\ensuremath{\textup{\textbf{GL}}}}\xspace}
\newcommand{\formgl}{\ensuremath{{\sf Form}_{\gl}} \xspace}
\newcommand{\intl}[1]{{\ensuremath {\textup{\textbf{IL}}}({\rm #1})}}
\newcommand{\principle}[1]{\formal{#1}}
\newcommand{\formal}[1]{\ensuremath{{\sf {#1}}\xspace}}
\newcommand{\ir}[1]{{\ensuremath{\mathrm{I}\Sigma^R_{#1}}}\xspace}
\newcommand{\isig}[1]{{\ensuremath {\mathrm{I}\Sigma_{#1}}}\xspace}
\newcommand{\ilp}{\extil{P}}
\newcommand{\ea}{\ensuremath{{\rm{EA}}}\xspace}
\newcommand{\ilm}{\extil{M}}
\begin{document}

\author{Thomas F. Icard}
\address{Stanford University}
\email{icard@stanford.edu}

\author{Joost J. Joosten}
\address{Universidad de Sevilla}
\email{jjoosten@us.es}

\title{Provability and Interpretability Logics with Restricted Realizations}

\maketitle 

\begin{abstract}
The provability logic of a theory $T$ is the set of modal formulas, which under any arithmetical realization are provable in $T$. We slightly modify this notion by requiring the arithmetical realizations to come from a specified set $\Gamma$. We make an analogous modification for interpretability logics. This is a paper from 2012. 

We first studied provability logics with restricted realizations, and show that for various natural candidates of theory $T$ and restriction set $\Gamma$, where each sentence in $\Gamma$ has a well understood (meta)-mathematical content in $T$, the result is the logic of linear frames. However, for the theory Primitive Recursive Arithmetic (\pra), we define a fragment that gives rise to a more interesting provability logic, by capitalizing on the well-studied relationship between \pra and \textsf{I}$\Sigma_1$.

We then study interpretability logics, obtaining some upper bounds for \intl{\pra}, whose characterization remains a major open question in interpretability logic. Again this upper bound is closely relatively to linear frames. The technique is also applied to yield the non-trivial result that $\intl{\pra}\subset \ilm$.

\end{abstract}



\section{Introduction} 
In a recent discussion on a mailing list on the foundations of mathematics\footnote{FOM mailing list, 8/19/2009, 
{\tt http://cs.nyu.edu/pipermail/fom/2009-August/013994.html }
} Joe Shipman asked for important theorems that have essentially only one proof. In reply, Giovanni Sambin provided the example of Solovay's arithmetical completeness theorem  of the provability logic \textbf{GL} (\cite{Sol76}). 

This paper deals with restricted cases of Solovay's theorem where alternative proof-methods are available. One of the broad motivations for this paper is the hope of obtaining an alternative proof of Solovay's Theorem (see Section \ref{section:Solovay}). However, the method of provability logics with restricted realizations, we feel, merits interest in its own right, as we shall explain shortly. Let us first briefly restate Solovay's Completeness Theorem, which is the cornerstone result in the study of provability logics.

\subsection{Provability Logics}

The propositional modal logic \textbf{GL}, known as \textit{G\"{o}del-L\"{o}b Logic},  captures exactly the behavior of the standard provability predicate in arithmetic. For a given theory $T$ (e.g. Peano Arithmetic), formulas $\Box A$ are interpreted as, ``$A$ is provable in $T$". It is defined by extending the basic modal logic \textbf{K} with a schematic formalization of L\"{o}b's Theorem (\textbf{L} in the following definition).
\begin{definition} \textnormal{\textbf{GL}} is given by all boolean tautologies, in addition to all instances of the following schemata
\[
\begin{array}{lll}
\mbox{\textnormal{\textbf{K}}} &:& \Box (A\to B) \to (\Box A \to \Box B);\\
\mbox{\textnormal{\textbf{L}}} &:& \Box (\Box A\to A) \to \Box A.\\
\end{array}
 \]  The logic is closed under \textit{modus ponens} and necessitation. \end{definition}
\textbf{GL} enjoys modal completeness with respect to a simple class of frames, in particular the class of finite, irreflexive, and transitive frames, which we henceforth refer to as \textbf{GL}-frames. The logic is linked to formalized provability via \emph{arithmetical realizations}. An arithmetical realization is a function $*$ that maps propositional variables to sentences in the language of (a given) arithmetic, sending $\bot$ to $0=1$. A realization $*$ can be extended uniformly so that we can interpret an arbitrary modal formula as an arithmetical formula by stipulating,
\[
\begin{array}{l}
(A\to B)^* = A^* \to B^*\\
(\Box A)^* = {\sf Bew}_T(\ulcorner A^* \urcorner).
\end{array}
\]
Here $\ulcorner \cdot \urcorner$ is a function that maps a formula $\varphi$ to its code $\ulcorner \varphi \urcorner$ and ${\sf Bew}_T(\cdot)$ is a predicate in the language of $T$ formalizing provability in $T$, so that $T\vdash \varphi$ just in case $\mathbb{N} \vDash  {\sf Bew}_T(\ulcorner \varphi \urcorner)$.

We define \textbf{PL}($T$), the \emph{provability logic} of a theory $T$, as follows
\[
\mbox{\textbf{PL}($T$)} := \{ A \mid \forall * T \vdash A^* \}.
\] 
Since L\"ob (\cite{Loeb55}) it is known that \textbf{GL} is sound for a large class of theories $T$, that is,  \textbf{GL} $\subseteq$ \textbf{PL}(\textsf{T}). The reverse inclusion is Solovay's completeness result.

\begin{theorem}[Solovay's Theorem]\label{theorem:solovay}
\textnormal{\textbf{PL}(}$T$\textnormal{)} = \textnormal{\textbf{GL}} for a wide range of theories $T$.
\end{theorem}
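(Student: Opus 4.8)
The plan is to prove Solovay's Theorem in the standard way, establishing the two inclusions \textbf{GL} $\subseteq$ \textbf{PL}($T$) and \textbf{PL}($T$) $\subseteq$ \textbf{GL} separately. The soundness direction is the routine one: one checks by induction on \textbf{GL}-derivations that every theorem has every arithmetical realization provable in $T$. The only nontrivial case is the \textbf{L} axiom, whose soundness under $*$ is precisely the formalized L\"ob's Theorem for ${\sf Bew}_T$; this works provided $T$ extends a weak base theory (enough to verify the Hilbert--Bernays--L\"ob derivability conditions for ${\sf Bew}_T$) and is $\Sigma_1$-sound, so that $\mathbb{N}\vDash{\sf Bew}_T(\ulcorner\varphi\urcorner)$ really does mirror provability. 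I would state these hypotheses explicitly as the meaning of ``a wide range of theories.''

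For the completeness direction I would argue contrapositively: given a modal formula $A$ with $A\notin\textbf{GL}$, use modal completeness of \textbf{GL} to obtain a finite, irreflexive, transitive \textbf{GL}-frame $(W,R)$ with a valuation falsifying $A$ at some node. The heart of the proof is Solovay's construction of an arithmetical realization $*$ together with an auxiliary primitive recursive function $h$ — the ``Solovay function'' — defined by the recursion theorem so that $h$ climbs the frame $W$ (augmented with a new root $0$), its limit being provably total-ish, and so that $T\vdash \ell = i \to \lnot{\sf Bew}_T(\ulcorner \ell\neq i\urcorner)$ for the nodes $i$ above the root, while $\mathbb{N}\vDash \ell = 0$. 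Setting $p^* := \bigvee\{\ell = i : i\Vdash p\}$ one proves the key lemma that for every subformula $B$ and every node $i\neq 0$: $T\vdash \ell=i\to B^*$ if $i\Vdash B$, and $T\vdash\ell=i\to\lnot B^*$ if $i\not\Vdash B$. Applying this at the node where $A$ fails, together with $\mathbb{N}\vDash\ell=0$ and the fact that $0$ sees that node, yields $T\nvdash A^*$, hence $A\notin\textbf{PL}(T)$.

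The main obstacle, and the step deserving the most care, is the self-referential definition and verification of the Solovay function $h$: one must define $h$ by the fixed-point/recursion theorem so that its behavior is simultaneously correctly describable inside $T$ (so the provability facts about $\ell = \lim h$ hold) and correctly computed in the standard model (so $\ell = 0$ there). Getting the two ``linking'' properties of $\ell$ right — that from the standpoint of $i$ the value cannot be provably pinned down, yet from outside it stabilizes at the root — is the delicate balancing act, and it is exactly the ingredient that, per the anecdote in the introduction, has resisted genuinely different proofs. Everything else (the adequacy of ${\sf Bew}_T$, the modal completeness of \textbf{GL} over finite transitive irreflexive frames, the propositional bookkeeping in the truth lemma) I would either cite or relegate to straightforward induction.

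Since this excerpt is really a survey setup rather than the place where Solovay's Theorem is reproved, an alternative and arguably more honest ``proof'' is simply to cite \cite{Sol76} for the original argument and point forward to Section \ref{section:Solovay}, noting that the rest of the paper is devoted precisely to situations where the restricted realizations admit the alternative, frame-theoretic proof methods that the full theorem seems to lack.
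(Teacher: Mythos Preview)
Your proposal is correct and matches the paper's treatment essentially point for point: the paper does not give a full proof but only the outline you describe---take a finite tree countermodel, adjoin a new root, define a primitive recursive Solovay function via the arithmetical fixed-point theorem whose limit stays at the root in $\mathbb{N}$, and set $p^*$ to the disjunction of the limit statements $\lambda_y$ over nodes $y$ forcing $p$. Your closing remark is also on target: the paper treats Theorem~\ref{theorem:solovay} as background, citing \cite{Sol76} (and \cite{JongJumeletMontagna} for the lower-bound refinement), and the whole point of the surrounding discussion is precisely that no genuinely different proof is known.
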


For soundness, i.e.\ \textbf{GL}$\subseteq$\textbf{PL}$(T)$, the theory can be as weak as $\textsf{I}\Delta_0 + \Omega_1$ or equivalently Buss's ${\sf S^1_2}$ (see \cite{BerarducciVerbrugge} and \cite{Buss86}).
Arithmetical completeness, i.e. that \textbf{PL}${(T)\subseteq}$\textbf{GL}, is known to hold for any sound\footnote{$\Sigma_1$-sound is sufficient here.\label{footnote:ClassOfTheories}} theory extending $\textsf{I}\Delta_0 + \exp$ (see \cite{JongJumeletMontagna}).

Solovay proved that whenever \textbf{GL} $\nvdash A$, there is a realization $*$ so that \textsf{PA} $\nvdash A^*$. An outline of the proof runs as follows. First, a modal countermodel $\mathcal{M}$ in the form of a rooted tree is taken that witnesses \textbf{GL} $\nvdash A$. Next, a new root is added to this model. A primitive recursive function $f$ on this model is defined in terms of its own provable limit behavior. This definition is made using an arithmetical  fixed point. The function $f$ starts in the newly added root and $f(x)$ remains where it is unless $x$ is a proof that the function does not have the node $y$, which is accessible from $x$, as a limit, in which case the function jumps to $y$. If $T$ is a sound theory, the function must stay where it started, in the newly added root. The realization $*$ is defined as a disjunction of the limit-statements $\lambda_y$ of the function $f$, where $\lambda_y$ says ``$y$ is the limit of $f$". More specifically  $p^*:= \bigvee_{\mathcal{M}, y \Vdash p}\lambda_y$.

\subsection{Restricted Realizations}
This ingenious proof thus gives us the concrete realization $*$. However, the arithmetical content of this realization $*$ is not exactly transparent.\footnote{There is a paper by de Jongh, Jumelet and Montagna \cite{JongJumeletMontagna} where an alternative proof of Solovay's theorem is given. In that proof, using the diagonal lemma, one finds some sentences with the required properties rather than defining the sentences and then proving the necessary properties. However, the obtained sentences are essentially the same as the ones defined in Solovay's original proof. } A natural question to ask is whether we can find translations with more clear arithmetical and proof theoretic content. And conversely, given a set of arithmetical sentences with a clear arithmetical content, what modal logics results from restricting realizations to this particular set? These questions motivate the following definition. We shall write, \emph{par abus de langage}, $*\in \Gamma$ to mean that the realization $*$ takes on all its values within the set of sentences $\Gamma$.

\begin{definition}
\textnormal{\textbf{PL}}$_{\Gamma}(T) := \{ A : \forall *\in \Gamma , \ T\vdash A^* \}$
\end{definition}
Notice that in a strict sense, \textnormal{\textbf{PL}}$_{\Gamma}(T)$ need not even be a logic as in general it is not closed under substitution. From the definition the following lemma is evident.
\begin{lemma}\label{lemma:upperboundmethod}
If $\Gamma \subseteq \Delta$, then \textnormal{\textbf{PL}}$_{\Delta}(T) \ \subseteq$ \textnormal{\textbf{PL}}$_{\Gamma}(T)$.
\end{lemma}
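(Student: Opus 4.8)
The plan is simply to unwind the two definitions; essentially no work is required beyond the bookkeeping of an inclusion of quantifier ranges. Assume $\Gamma \subseteq \Delta$ and take an arbitrary modal formula $A \in \textbf{PL}_{\Delta}(T)$; the goal is to place $A$ in $\textbf{PL}_{\Gamma}(T)$. By the definition of $\textbf{PL}_{\Gamma}(T)$, what has to be verified is that $T \vdash A^{*}$ holds for every realization $*$ with $* \in \Gamma$ (in the \emph{abus de langage} of the excerpt, i.e.\ every realization taking all its values in $\Gamma$). So fix such a $*$.

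Since $\Gamma \subseteq \Delta$, a realization that takes all of its values in $\Gamma$ in particular takes all of its values in $\Delta$; that is, $* \in \Gamma$ implies $* \in \Delta$. Hence the hypothesis $A \in \textbf{PL}_{\Delta}(T)$ applies to this very $*$ and yields $T \vdash A^{*}$. As $*$ was arbitrary among realizations into $\Gamma$, we conclude $A \in \textbf{PL}_{\Gamma}(T)$, which is the desired inclusion.

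\noindent\textbf{Remarks on the argument.} There is no real obstacle here: the content of the lemma is the elementary monotonicity principle that shrinking the set of admissible realizations from $\Delta$ to $\Gamma$ weakens the universally quantified closure condition defining the provability logic, so the family of modal formulas satisfying it can only grow. I record it nonetheless because it is exactly the mechanism behind the ``upper bound method'' used later: to bound $\textbf{PL}_{\Gamma}(T)$ above by some target modal logic $L$ one exhibits enough realizations living inside $\Gamma$, and Lemma~\ref{lemma:upperboundmethod} then controls how such bounds behave as $\Gamma$ is enlarged towards the full set of arithmetical sentences, where one recovers the genuine $\textbf{PL}(T)$. The identical one-line argument will also give the corresponding monotonicity in the interpretability setting once the restricted interpretability logics $\textbf{IL}_{\Gamma}(T)$ are introduced, since it uses nothing about the modal language or the arithmetical interpretation beyond the shape of the defining quantifier.
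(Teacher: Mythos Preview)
Your proof is correct and matches the paper's own treatment: the paper simply declares the lemma ``evident from the definition'' without writing out any argument, and your unwinding of the definitions is exactly that evident verification. There is nothing to add or correct.
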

Clearly, by taking $\Gamma$ to be the set of \emph{all} arithmetical sentences we get  \textbf{PL}$_{\Delta}(T)=$ \textbf{PL}$(T)$. For a large class of theories however, we can improve this to the following theorem.

\begin{theorem}\label{theorem:BoolSigmaPL}
For all those theories $T$ for which Solovay's Theorem \ref{theorem:solovay} can be proved using the original proof we have that
\[
\textnormal{\textbf{PL}}_{\mathcal{B}(\Sigma_1)}(T) \ \subseteq \textnormal{\textbf{PL}}(T).
\]
Here, $\mathcal{B}(\Sigma_1)$ denotes the class of Boolean combinations of $\Sigma_1$ sentences.
\end{theorem}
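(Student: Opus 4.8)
The plan is to show that Solovay's original proof, when run for such a theory $T$, already yields a realization whose values all lie in $\mathcal{B}(\Sigma_1)$; the only real work is a complexity bookkeeping of the Solovay sentences, which is exactly why the statement is restricted to those $T$. Since any such $T$ is in particular $\Sigma_1$-sound and satisfies $\gl\subseteq\textnormal{\textbf{PL}}(T)$, it is enough to prove: for every modal formula $A$ with $\gl\nvdash A$ there is a realization $*\in\mathcal{B}(\Sigma_1)$ with $T\nvdash A^*$. (The inclusion $\textnormal{\textbf{PL}}(T)\subseteq\textnormal{\textbf{PL}}_{\mathcal{B}(\Sigma_1)}(T)$ is immediate from Lemma~\ref{lemma:upperboundmethod}, so in fact equality holds and $\textnormal{\textbf{PL}}_{\mathcal{B}(\Sigma_1)}(T)=\gl$.) So fix such an $A$, take a finite rooted $\gl$-countermodel $\mathcal{M}$, adjoin a new root, and let $f$ be the primitive recursive Solovay function on the enlarged tree, defined by the usual arithmetical fixed point.

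The key point is the complexity of the limit sentences $\lambda_y\equiv$ \emph{``$y$ is the limit of $f$''}. Written naively this is $\Sigma_2$, but we can do better by exploiting two structural features of $f$: it is non-decreasing along the tree order, and the tree is finite (so $f$ is eventually constant). Let $\sigma_y:=\exists s\,(f(s)=y)$ say that $f$ visits $y$; since the graph of the primitive recursive $f$ is $\Delta_1$ and $T$ reasons about primitive recursive functions, $\sigma_y$ is (equivalent to) a $\Sigma_1$ sentence. As $f$ can move only from a node to one of its finitely many immediate successors, ``$f$ visits $y$ but visits no immediate successor of $y$'' is equivalent, provably in $T$, to $\lim f=y$; thus
\[
T\vdash\lambda_y\ \leftrightarrow\ \Bigl(\sigma_y\wedge\bigwedge_{yRz}\neg\sigma_z\Bigr),
\]
and the right-hand side is a Boolean combination of finitely many $\Sigma_1$ sentences. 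We accordingly take $\lambda_y$ to \emph{be} this $\mathcal{B}(\Sigma_1)$ sentence and define the realization by $p^*:=\bigvee_{\mathcal{M},\,y\Vdash p}\lambda_y$ and $\bot^*:=(0{=}1)$. Since $\mathcal{M}$ is finite and $\mathcal{B}(\Sigma_1)$ is closed under finite Boolean combinations, $p^*\in\mathcal{B}(\Sigma_1)$ for every variable $p$, that is, $*\in\mathcal{B}(\Sigma_1)$.

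It remains to run Solovay's argument to conclude $T\nvdash A^*$. Its ingredients --- that $T$ proves $\bigvee_y\lambda_y$ and the mutual exclusivity of the $\lambda_y$, the relevant ``$\Box$''-transition lemmas, and, via $\Sigma_1$-soundness of $T$, that $\lim f$ is the new root --- are properties of the function $f$ that are unaffected by taking $\lambda_y$ in the Boolean-$\Sigma_1$ form above; concretely, the safest route is to re-run Solovay's construction with this form of $\lambda_y$ from the outset and re-verify his lemmas, which is routine given the equivalence above. This produces a realization $*\in\mathcal{B}(\Sigma_1)$ with $T\nvdash A^*$, hence $A\notin\textnormal{\textbf{PL}}_{\mathcal{B}(\Sigma_1)}(T)$, as required. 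The one place that genuinely needs care is the passage just flagged: recognizing that the monotonicity of the Solovay function on the finite tree collapses the a priori $\Sigma_2$ limit statements to Boolean combinations of the $\Sigma_1$ ``visiting'' statements, and checking that this replacement leaves all of Solovay's lemmas intact.
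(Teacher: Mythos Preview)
Your proposal is correct and follows the same overall strategy as the paper: inspect Solovay's construction and show that the limit sentences $\lambda_y$ can be taken in $\mathcal{B}(\Sigma_1)$, so that the realizations produced by the completeness proof already lie in this class. The only difference is in the specific $\mathcal{B}(\Sigma_1)$ rendering of $\lambda_y$. You use the finite tree structure and write ``$f$ visits $y$ but visits none of the (finitely many) $z$ with $yRz$'', i.e.\ $\sigma_y\wedge\bigwedge_{yRz}\neg\sigma_z$; the paper (following a remark of Visser) instead uses the monotonicity of $f$ directly and writes ``$f$ visits $i$, and once at $i$ it stays there'', i.e.\ $\exists x\,(h(x)=i)\ \wedge\ \forall y,z\,((y\le z\wedge h(y)=i)\to h(z)=i)$, which is $\Sigma_1\wedge\Pi_1$. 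Both are valid; yours hard-codes the successor set of each node into the sentence, while the paper's version is uniform in the node and does not mention the tree at all. One small wrinkle in your write-up: you say ``immediate successor'' in the prose but quantify over all $z$ with $yRz$ in the displayed formula; either choice works, but they should agree.
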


\begin{proof}
By close inspection of the proof of Solovay's theorem, we see that all substitutions are disjunctions of limit statements.  It is clear that for elementary functions $h$, the statement ``$h$ has a limit" can be expressed in a $\Sigma_2$ fashion. However, as Albert Visser pointed out to us, the statement ``$h$ has limit $i$" which is only actually needed in Solovay's proof, can be expressed as $\mathcal{B}(\Sigma_1)$:
\[
\left[ \exists x\; h(x)=i\right] \ \ \wedge \ \ \left[ \forall y,z \; ((y\leq z \wedge hy=i) \to h(z) = i)\right].
\]
Taking disjunctions of those sentences will of course not get us out of the class $\mathcal{B}(\Sigma_1)$.
\end{proof}

The simple Lemma \ref{lemma:upperboundmethod} can be used to establish upperbounds for a provability logic if one is unable to find the full provability logic. For example, it is a long standing open question what the provability logic is of bounded arithmetics such as $\sf S^1_2$.\footnote{This question has been studied in depth in \cite{BerarducciVerbrugge}. The question also has important connections to matters in computational complexity. For example, it is shown in \cite{Buss86} that if $\sf S^1_2$ proves $\Pi_1^b$-completeness with parameters ($\Pi_1^b$ is the set of formulas $(\forall x \leq t)\  \theta$ with $\theta$ sharply bounded), then {\sf NP = coNP}.} 

\subsection{Applications and plan of the paper}\label{subsection:ApplicationsAndPlan}

One can thus use $\Gamma$ to study the provability logic of $T$. On the other hand, we shall see that \textbf{PL}$_{\Gamma}(T)$ can also be used to \emph{characterize}  the fragment $\Gamma$. For example, in Theorem \ref{theorem:mainTheorem}  below we consider the closed fragment $\mathcal{B}$ of provability logic, which consists of boolean combinations of iterated (in)consistency statements. This fragment is given by the following grammar.
\[
\mathcal{B}\ \ \ := \ \ \ \bot \ \mid \ \mathcal{B}\to \mathcal{B}   \ \mid \ \Box \mathcal{B}.
\]
We shall see that the modal formulas valid under all realizations from this fragment are exactly the formulas valid on all finite strict linear orders. This can be said to provide yet further evidence that reflection principles and iterated consistency statements are inherently linearly ordered. 

Moreover, this fact also gives us information on what kind of arithmetical fixed point constructions are needed in the proof of Theorem \ref{theorem:solovay}. By the modal Fixed Point Theorem, independently due to de Jongh and Sambin (see \cite{Sambin} (de Jongh actually never published his proof)), we know that certain applications of the arithmetical fixed point theorem can be dispensed with. More precisely, if we have a  formula $A(x)$ where the $x$ only occurs directly under the scope of a ${\sf Bew}_T$ predicate then applying the fixed point to this formula does not give us new expressive power. That is, if we can prove $B \leftrightarrow A(\ulcorner B \urcorner)$ then $B$ is actually provably equivalent to a formula in the language of provability logic.
Thus, these sort of applications of the arithmetical fixed point theorem only yield formulas in $\mathcal{B}$, whence, \textit{pace} Theorem \ref{theorem:closedFragment}, cannot suffice for a proof of Solovay's completeness result, Theorem \ref{theorem:solovay}.

Another example of restricting the substitutions is known in the literature. In \cite{viss:81} Visser studied the provability logic that arises when restricting substitutions to $\Sigma_1$ sentences. 

\medskip

In Section \ref{section:glp} we shall consider a fragment $\mathcal{D}$ which contains infinitely many copies of $\mathcal{B}$ for increasingly strong provability predicates. It turns out that even for this richer fragment we do not move beyond linear frames (cf. Theorem \ref{theorem:ClosedFragmentGLP}). However, in Section \ref{section:NonLinear} we shall see that there is a natural fragment for \textsf{PRA} whose associated provability logic lies strictly in between the logic of linear frames and \textbf{GL}.

In Section \ref{section:InterpretLogics} we shall see how restricted realizations can also be applied to interpretability logics.

\section{Fragments and Logics} \label{fragmentslogics}


In this section we show that certain conditions on a given fragment translate to a semantic characterization of the corresponding restricted provability logics. First, some preliminaries on basic relational semantics for provability logics.

Recall that a \textit{frame} $\mathbb{F}$ for \textbf{GL} is an ordered pair $\langle W, R \rangle$, where $W$ is a set of points and $R \subseteq W \times W$ is a finite, irreflexive, and transitive relation. Given a set \textsf{Prop} of propositional variables, a \textit{model} $\mathcal{M}$ \textit{based on $\mathbb{F}$} is a triple $\langle W,R,V \rangle$, where $V: \textsf{Prop} \rightarrow \wp(W)$ is a \textit{valuation function} assigning to each variable the set of the points where it is true. We shall also write $V$ for the straightforward extension of $V$ to arbitrary modal formulas. We then write $\langle W,R,V \rangle , w \vDash A$, just in case $w \in V(A)$. We write $\langle W,R,V\rangle \vDash A$ if $A\in V(w)$ for all $w\in W$. Overloading notation, we also write $\langle W,R \rangle \vDash A$, if $\langle W,R,V \rangle \vDash A$ for all $V$. We say $A$ is \textit{valid} in the model and in the frame, respectively. 



When dealing with fragments, however, arbitrary variables will not be present. All of the fragments we shall consider in this paper will extend the fragment $\mathcal{B}$ defined above, by adding constants $\sigma _1, \sigma _2, \sigma _3,...$, with some clear arithmetical content. As these constants will be fixed, and as we would like to characterize the sentences in this fragment modally, we shall add constants $s_1, s_2, s_3,...$, to our modal language, and correspondingly extend the definition of a realization to ensure that $(s_i)^* = \sigma _i$. In fact, given this convention, we will be able to define our fragments in a single language and throughout treat each constant simultaneously as a constant in the modal language and as a specified arithmetical formula, disambiguating whenever the distinction is not clear from context. In other words, we will usually not distinguish between $A$ and $A^*$.

On the other hand, as far as the relational semantics is concerned, the constants $s_1, s_2, s_3,...$ are simply treated as variables. Therefore the above notation is extended in the obvious way to this setting.

Suppose we would like to obtain a modal characterization of \textbf{PL}$_{\mathcal{F}}(T)$. Under certain circumstances, it suffices to know how $\mathcal{F}$ is characterized according to $T$. To be precise, if we have a model $\mathcal{M}$ based on a frame $\mathbb{F}$, such that for each $A \in \mathcal{F}$, the following condition holds:
\begin{equation}\label{equation:FroedelCondition}
T \vdash A \ \Leftrightarrow \ \mathcal{M} \vDash A,
\end{equation}
then, as is shown in Theorem \ref{theorem:mainTheorem} below, \textbf{PL}$_{\mathcal{F}}(T) = \mathcal{L}(\mathbb{F})$. Here, $\mathcal{L}(\mathbb{F})$ is the set of formulas in the basic modal language (with propositional variables) valid on the frame $\mathbb{F}$.

There are two side conditions to our theorem. One of them involves image-finiteness. We call a model \emph{image-finite} if $\{y : xRy\}$ is finite for each $x$. We shall denote the set $\{y : xRy\} \cup \{x\}$ by $x\uparrow$.
Our theorem thus reads as follows:

\begin{theorem}\label{theorem:mainTheorem}
Suppose that \eqref{equation:FroedelCondition} holds for a model $\mathcal{M}$ based on frame $\mathbb{F}$. Suppose moreover that $\mathcal{M}$ is image-finite and that each point $x\in \mathcal{M}$ is uniquely definable by a formula $D_x \in \mathcal{F}$. Then, we have that \textnormal{\textbf{PL}}$_{\mathcal{F}}(T) = \mathcal{L}(\mathbb{F})$.
\end{theorem}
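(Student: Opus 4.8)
The plan is to prove the two inclusions $\mathcal{L}(\mathbb{F}) \subseteq \textbf{PL}_{\mathcal{F}}(T)$ and $\textbf{PL}_{\mathcal{F}}(T) \subseteq \mathcal{L}(\mathbb{F})$ separately, using condition \eqref{equation:FroedelCondition} to transfer between the arithmetical and the modal sides, and using the definability of points together with image-finiteness to pass from the single model $\mathcal{M}$ to all valuations on $\mathbb{F}$.

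For the inclusion $\mathcal{L}(\mathbb{F}) \subseteq \textbf{PL}_{\mathcal{F}}(T)$, I would take a modal formula $A$ valid on $\mathbb{F}$ and an arbitrary realization $* \in \mathcal{F}$; I must show $T \vdash A^*$. The idea is that a realization $*$ with values in $\mathcal{F}$ induces, via \eqref{equation:FroedelCondition}, a valuation $V$ on $\mathbb{F}$ (reading, for a constant/variable $s_i$, $V(s_i) := \{x : \mathcal{M}, x \vDash s_i \text{ under the original valuation}\}$, but more precisely defining $V$ so that the arithmetical truth-in-$\mathcal{M}$ of $s_i^*$ matches validity at the relevant points). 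One then shows by induction on the structure of modal formulas $B \in \mathcal{F}$ that $T \vdash D_x \to B^*$ iff $\mathcal{M},x \vDash B$, i.e. that the realization $*$ is ``simulated'' pointwise by the model $\mathcal{M}$ with valuation $V$; the Boolean cases are routine, and the $\Box$ case uses \eqref{equation:FroedelCondition} applied to $\Box(\cdots)$ together with the fact that the $D_x$ provably partition the relevant universe and that image-finiteness makes $\Box B^*$ provably equivalent to a finite conjunction $\bigwedge_{xRy} (D_y \to B^*)$-type statement. Since $A$ is valid on $\mathbb{F}$, it is in particular true at every point of $\mathcal{M}$ under $V$, and then $T \vdash \bigvee_x D_x \to A^*$, hence $T \vdash A^*$ because $T \vdash \bigvee_x D_x$ (this last fact also following from \eqref{equation:FroedelCondition}, since $\bigvee_x D_x$ is true throughout $\mathcal{M}$).

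For the converse $\textbf{PL}_{\mathcal{F}}(T) \subseteq \mathcal{L}(\mathbb{F})$, I would argue contrapositively: if $A \notin \mathcal{L}(\mathbb{F})$, there is a valuation $V$ on $\mathbb{F}$ and a point $w$ with $\mathcal{M}' := \langle W, R, V\rangle, w \nvDash A$. I then need to build a realization $* \in \mathcal{F}$ with $T \nvdash A^*$. The natural move, mirroring Solovay's own construction but now in the finished semantic form, is to set $p^* := \bigvee_{\mathcal{M}', y \vDash p} D_y$ for each propositional variable $p$ occurring in $A$ (and $s_i^* := \sigma_i$ as stipulated). This realization lands in $\mathcal{F}$ because $\mathcal{F}$ is closed under the connectives present in the $D_y$'s and under disjunction. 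One then proves, by induction on subformulas $B$ of $A$, the ``truth lemma'' $\mathcal{M},x \vDash B^*$ under the original valuation $\iff$ $\mathcal{M}',x \vDash B$, and more sharply $T \vdash D_x \to B^* \iff \mathcal{M}',x\vDash B$ and $T \vdash D_x \to \neg B^* \iff \mathcal{M}', x\nvDash B$, again using \eqref{equation:FroedelCondition} and image-finiteness for the modal clause. Applying this at $w$ with $B = A$ gives $T \vdash D_w \to \neg A^*$; since $D_w$ is consistent with $T$ (it is true in $\mathcal{M}$, so by \eqref{equation:FroedelCondition} $T \nvdash \neg D_w$), we get $T \nvdash A^*$, hence $A \notin \textbf{PL}_{\mathcal{F}}(T)$.

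The main obstacle I expect is the $\Box$-case of the induction in both directions: one must show that $T \vdash D_x \to (\Box B)^*$ is governed purely by the truth values of $B^*$ at the $R$-successors of $x$, uniformly with respect to $T$-provability rather than just truth in $\mathbb{N}$. This is exactly where image-finiteness and the unique definability of points by formulas $D_y \in \mathcal{F}$ are essential — they let one prove, inside $T$, that $(\Box B)^*$ is equivalent (relative to $D_x$) to $\bigwedge_{xRy} \Box(D_y \to B^*)$ or a similar finitary statement, and then feed this through \eqref{equation:FroedelCondition}; handling the quantifier-like behavior of $\Box$ with only finitely many successors, and making sure the equivalences used are $T$-provable and stay within $\mathcal{F}$, is the delicate part. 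The irreflexivity and transitivity of $R$ (the \textbf{GL}-frame conditions) are what guarantee that the induction is well-founded and that $\mathcal{L}(\mathbb{F})$ is a sensible target.
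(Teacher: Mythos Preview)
Your overall strategy---defining the realization via disjunctions of the defining formulas $D_y$ and proving a truth lemma by induction---is in the same spirit as the paper's, but there is a genuine gap and an unnecessary detour.

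The gap is that your disjunctions may be infinite. In the contrapositive direction you set $p^* := \bigvee_{\mathcal{M}',\,y\vDash p} D_y$, ranging over \emph{all} points of the model at which $p$ holds; and in the forward direction you invoke $T \vdash \bigvee_x D_x$ over all $x$. But the frames in the intended applications (e.g.\ $\langle \omega, > \rangle$ and $\mathfrak{G}_n$) are infinite, so these are not formulas at all, let alone formulas in $\mathcal{F}$. The paper instead fixes the single point $i$ where the countermodel lives and takes $p^* := \bigvee_{x \in V(p)\cap i{\uparrow}} D_x$; image-finiteness is invoked precisely \emph{here}, to guarantee that $i{\uparrow}$ is finite and hence that $p^* \in \mathcal{F}$. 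You mention image-finiteness only in connection with the $\Box$-clause, but its real job in this proof is to make the realization well-defined.

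The detour is working at the level of $T$-provability of implications $D_x \to B^*$. Condition~\eqref{equation:FroedelCondition} already gives $T \vdash A \Leftrightarrow \mathcal{M} \vDash A$ for every $A \in \mathcal{F}$; since $A^* \in \mathcal{F}$ whenever $* \in \mathcal{F}$, the question of whether $A \in \textbf{PL}_{\mathcal{F}}(T)$ reduces \emph{immediately} to whether $\mathcal{M} \vDash A^*$ for all $* \in \mathcal{F}$---a purely semantic statement about the single model $\mathcal{M}$, with $T$ eliminated. Both directions then become short inductions on formula structure: for $\Leftarrow$ define $V_*(p) := \{i : \mathcal{M},i \vDash p^*\}$ and show $\langle \mathbb{F}, V_*\rangle, i \vDash B \Leftrightarrow \mathcal{M}, i \vDash B^*$; for $\Rightarrow$ use the restricted disjunction above and show $\langle \mathbb{F}, V\rangle, j \vDash C \Leftrightarrow \mathcal{M}, j \vDash C^*$ for all $j \in i{\uparrow}$. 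There is no need to track $T$-provability of localized statements, and your anticipated obstacle in the $\Box$-case---getting $T$ to ``see'' only the successors of $x$---dissolves once the arithmetic has been factored out up front.
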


\begin{proof}
In the light of \eqref{equation:FroedelCondition} it suffices to prove that 
\[
\forall \, *\, \in \mathcal{F},\  \mathcal{M}\vDash B^*\ \Leftrightarrow \  \mathbb{F} \vDash B.
\]
\begin{itemize}
\item[$\Leftarrow$]
Consider some arbitrary $* \in \mathcal{F}$ and define $V_*(p) := \{ i : \mathcal{M},i \vDash p^*\}$. By induction on $A$ we see that for each $i \in \mathbb{F}$ 
\[
\langle \mathbb{F}, V_* \rangle, i \Vdash A \ \Leftrightarrow \ \mathcal{M}, i \Vdash A [p/p^*]
\]
and we are done.

\item[$\Rightarrow$]
Given some $i \in \mathbb{F}$ and some arbitrary valuation $V$ we define $*$ by
\[
p^* := \bigvee_{x\in V(p) \cap i\uparrow} D_x.
\]
As the frame is image-finite, the disjunction is finite. By an induction\footnote{In order to get the inductive step for the $\Box$ operator going we should prove the slightly stronger statement that for all $j\in i\uparrow$ we have $\langle \mathbb{F}, V \rangle, j \vDash C \ \Leftrightarrow \ \mathcal{M}, j \vDash C^*$.} on $C$ we see again that
\[
\langle \mathbb{F}, V \rangle, i \vDash C \ \Leftrightarrow \ \mathcal{M}, i \vDash C^*.
\]
As $i$ was arbitrary, we see that $\mathbb{F} \vDash C$.
\end{itemize}

\end{proof}

As we shall see below, in many occasions we actually will have something stronger than \eqref{equation:FroedelCondition}. In particular we shall often find ourselves in a situation where we have, apart from the frame, also 
 a modal logic \textbf{L} for which we have
\[
T \vdash A \ \Leftrightarrow \ {\bf L} \vdash A \ \Leftrightarrow \ \mathcal{M} \vDash A.
\]
This logic \textbf{L} will facilitate our calculations considerably.

\section{The Closed Fragment} \label{section:closedfragment}
With Theorem \ref{theorem:mainTheorem} we can calculate our first provability logic with restricted substitutions. Recall the definition of the closed fragment $\mathcal{B}$ in Subsection \ref{subsection:ApplicationsAndPlan}. 
\begin{definition}
\textnormal{\textbf{GL.3}} is the logic \textnormal{\textbf{GL}} together with the linearity axiom:
\[
\Box(\Box A \to B) \vee \Box (\Box^+ B \to A).
\]
Here and below, $\Box^+ A$ is short for $A \wedge \Box A$.
\end{definition}

\begin{theorem}\label{theorem:closedFragment}
\textnormal{\textbf{PL}$_{\mathcal{B}}$($T$)}= \textnormal{\textbf{GL.3}} for a large class\footnote{See Footnote \ref{footnote:ClassOfTheories} on conditions on theories. The current proof of this theorem invokes Solovay's completeness result, Theorem \ref{theorem:solovay}, in full. However, in \cite{Joo03} it is shown how we can substitute the use of Solovay's completeness result by the proof of Theorem \ref{theorem:mainTheorem}. Thus, Theorem \ref{theorem:closedFragment} actually holds for a larger class of theories including $\textsf{I}\Delta_0 + \Omega_1$.} of theories $T$.
\end{theorem}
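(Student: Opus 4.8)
The plan is to instantiate Theorem~\ref{theorem:mainTheorem} with the right $T$-provably-correct model for the closed fragment $\mathcal{B}$, and then identify the resulting frame logic $\mathcal{L}(\mathbb{F})$ with \textbf{GL.3}. First I would recall the structure of the closed fragment: since $\mathcal{B}$ contains no variables and no constants $s_i$, every formula in $\mathcal{B}$ is, modulo provable equivalence in \gl (equivalently, in the theory $T$), equivalent to one of the standard normal forms built from the iterated consistency statements $\Diamond^n\top$. The canonical model to feed into Theorem~\ref{theorem:mainTheorem} is the ``$\omega+1$'' \gl-model on $W=\{\omega,0,1,2,\dots\}$ with $R$ the converse of the usual order (so $\omega R n$ for all $n$, and $m R n$ iff $m>n$), which is exactly the model on which the closed fragment is evaluated: here $\mathcal{M},n\vDash\Diamond^k\top$ iff $n\geq k$, and $\mathcal{M},\omega\vDash\Diamond^k\top$ for all $k$. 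One then checks condition~\eqref{equation:FroedelCondition}, namely $T\vdash A\iff\mathcal{M}\vDash A$ for $A\in\mathcal{B}$; the direction $\Leftarrow$ is soundness (validity on this model is a special case of validity on \gl-frames plus the limit point behaving correctly, which is where $\Sigma_1$-soundness / Solovay enters), and $\Rightarrow$ follows because a closed formula refuted somewhere in $\mathcal{M}$ is refuted at some finite node, hence corresponds via Solovay's theorem to an arithmetical counterexample, so $T\nvdash A$.

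Next I would verify the two side conditions of Theorem~\ref{theorem:mainTheorem} for this model. Image-finiteness is immediate: each finite node $n$ sees only $\{0,\dots,n-1\}$, and the root $\omega$ sees all finite nodes, so $\omega\!\uparrow$ is infinite --- wait, that fails literal image-finiteness at $\omega$. The honest move here is that one only needs the model restricted appropriately, or more precisely one works with the finite initial segments: for the $\Leftarrow$ direction of Theorem~\ref{theorem:mainTheorem} one picks a point $i$ and works inside $i\!\uparrow$, which for a finite node is finite; the limit node $\omega$ is handled by a compactness/continuity argument since any single formula $B$ has bounded modal depth $d$ and is thus decided by the truncation to $\{\omega,0,\dots,d\}$, on which everything is finite. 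So I would state the model-side argument as: every $\mathcal{B}$-formula, and indeed every modal formula, of depth $d$ is validity-equivalent on $\mathcal{M}$ to its validity on the finite \gl-model $\{d,d-1,\dots,0\}$ with converse-order accessibility. Unique definability of points is the clean part: node $n$ among $\{0,\dots,d\}$ is defined within $\mathcal{B}$ by $D_n:=\Diamond^n\top\wedge\neg\Diamond^{n+1}\top$, and the limit node (if retained) by the relevant truncation-consistent formula; these are manifestly in $\mathcal{B}$.

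It then remains to compute $\mathcal{L}(\mathbb{F})$ where $\mathbb{F}$ ranges over finite strict linear orders (equivalently, to show \textbf{GL.3} is complete with respect to finite irreflexive transitive \emph{linear} \gl-frames). This is the classical modal-logic fact: \textbf{GL.3} is exactly the logic of finite strict linear orders. I would cite the standard completeness proof --- \textbf{GL.3} is canonical-model/filtration complete for transitive, irreflexive, conversely-well-founded, and \emph{connected} frames, and on finite such frames connectedness forces linearity --- and verify that the linearity axiom $\Box(\Box A\to B)\vee\Box(\Box^+ B\to A)$ is valid on exactly the connected transitive frames. Combining: \textbf{PL}$_{\mathcal{B}}(T)=\mathcal{L}(\text{finite strict linear orders})=\textbf{GL.3}$.

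The main obstacle I anticipate is not the modal-completeness bookkeeping but the careful handling of the limit point $\omega$ in reconciling the infinite model $\mathcal{M}$ (on which the closed fragment is naturally interpreted, and on which \eqref{equation:FroedelCondition} is stated via Solovay) with the literal image-finiteness hypothesis of Theorem~\ref{theorem:mainTheorem}; the fix is the depth-truncation argument sketched above, reducing each instance to a genuinely finite image-finite model, but it needs to be spelled out so that the $\Box$-step of the induction in Theorem~\ref{theorem:mainTheorem} still goes through uniformly in the chosen point $i$. The other point requiring care, flagged in the footnote to the statement, is that invoking Solovay's full completeness theorem for the $\Rightarrow$ direction restricts the class of $T$; replacing it by a direct Solovay-style construction relative to the linear model (as in \cite{Joo03}) widens the class to include $\textsf{I}\Delta_0+\Omega_1$, but for the present proof I would simply take Theorem~\ref{theorem:solovay} as given, per the footnote's convention.
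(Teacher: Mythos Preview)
Your overall strategy---instantiate Theorem~\ref{theorem:mainTheorem} and then identify the resulting frame logic with \textbf{GL.3}---is exactly what the paper does. The difference is in the choice of model, and here you have made your own life harder than necessary.

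The paper takes the frame $\langle \omega, > \rangle$: carrier $\omega = \{0,1,2,\dots\}$, accessibility $mRn \Leftrightarrow m>n$, with \emph{no} limit point adjoined. Each $n$ then sees only $\{0,\dots,n-1\}$, so the frame is literally image-finite and Theorem~\ref{theorem:mainTheorem} applies without any truncation gymnastics. Condition~\eqref{equation:FroedelCondition} holds because (via Solovay) $T\vdash A$ iff $\textbf{GL}\vdash A$, and for closed $A$ truth at a point in any \textbf{GL}-model depends only on its rank, so $\textbf{GL}\vdash A$ iff $A$ holds at every $n$ in $\langle\omega,>\rangle$. Definability of $n$ is by $\Diamond^n\top\wedge\Box^{n+1}\bot$, as you note, and $\mathcal{L}(\langle\omega,>\rangle)=\textbf{GL.3}$ is the standard modal fact cited from Boolos.

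The extra root $\omega$ you introduced is precisely what breaks image-finiteness, and the depth-truncation repair you sketch does not mesh with the statement of Theorem~\ref{theorem:mainTheorem}, which requires a \emph{single} frame $\mathbb{F}$ and returns $\mathcal{L}(\mathbb{F})$. Passing to finite chains $\{0,\dots,d\}$ gives a different frame for each $d$, and $\mathcal{L}(\{0,\dots,d\})$ is not \textbf{GL.3} (it contains $\Box^{d+1}\bot$), so you would then need a separate argument reconciling the family of frame logics with \textbf{GL.3}. The clean fix is not truncation but simply omitting the superfluous limit point; once you do that, your ``main obstacle'' evaporates and your argument collapses to the paper's.
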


\begin{proof}
It is well known that the truth of a closed formula at a particular point in a model depends solely on the rank of that point. Here, the rank of a point $x$ is defined as the supremum of lengths of paths leading from $x$ to a leaf. See for example Chapter 7 from \cite{Bool93}.

Thus, the linear frame $\langle \omega, > \rangle$ is universal for $\mathcal{B}$ in the sense that if a formula $\varphi \in \mathcal{B}$ is false at some point in some frame, then it is actually false at some point in $\langle \omega, > \rangle$. Thus, by Theorem \ref{theorem:solovay}, we have 
$T \vdash A \ \Leftrightarrow \ \langle\omega,>\rangle \vDash A$.

Furthermore, it is known that the logic of the frame $\langle \omega, > \rangle$ is axiomatized by  \textbf{GL.3}. (See, for example, Chapter 13 of \cite{Bool93}.) Thus, $\langle\omega,>\rangle \vDash A\ \Leftrightarrow \ \mbox{\textbf{GL.3}}\vdash A$ and Condition \ref{equation:FroedelCondition} is satisfied for any model based on $\langle\omega,>\rangle$.

Note that $\langle\omega,>\rangle$ is image-finite and that the point $n$ is defined by $\Diamond^n \top \wedge \Box^{n+1}\bot$. Thus, by Theorem \ref{theorem:mainTheorem} we have our result.
\end{proof}


\section{Substitutions from the Closed Fragment of \textbf{GLP}}\label{section:glp}
Japaridze's Logic \textbf{GLP} (\cite{Jap86}) describes all of the universally valid schemata for reflection principles of restricted logical complexity in arithmetic. It is formulated in a language with infinitely many modalities, where $[n]A$ is read arithmetically as, \begin{center} \emph{$A$ is provable from $T$ along with all true $\Pi_n$ sentences.} \end{center} Arithmetical completeness with respect to this interpretation was proven in \cite{Ign93}, for sound theories containing only a modest amount of arithmetic.
\begin{definition} \textnormal{\textbf{GLP}} is given by the following axiom schemata, \begin{enumerate}
\item[(i)] All boolean tautologies;
\item[(ii)] $[n]([n]A \rightarrow A) \rightarrow [n]A$, for all $n$;
\item[(iii)] $[m]A \rightarrow [n]A$, for $m \leq n$;
\item[(iii)] $\langle m \rangle A \rightarrow [n]\langle m \rangle A$, for $m<n$;
\end{enumerate}
in addition to the rules of \textit{modus ponens} and necessitation for each $[n]$.
\end{definition}
While \textbf{GLP} does not admit of any frame semantics, various other models have been given (see, e.g. \cite{Bek09} and \cite{BBI09}). In particular, Ignatiev \cite{Ign93} has defined a \textit{universal frame} for the closed fragment of \textbf{GLP}, denoted \textbf{GLP}$_0$, which will be of use.\footnote{This frame is studied in detail in \cite{BJV} and \cite{Ica09}.}

Define $\mathcal{D}$ to be the fragment given by the following infinite grammar:
\[\mathcal{D} \; := \; \bot \; \vert \; \mathcal{D} \rightarrow \mathcal{D} \; \vert \; [0]\mathcal{D} \; \vert \; [1]\mathcal{D} \; \vert \; [2]\mathcal{D} \; \vert \; ...\] That is, \textbf{GLP}$_0$ is simply \textbf{GLP} restricted to the fragment $\mathcal{D}$, with no variables.

We can describe Ignatiev's universal frame for \textbf{GLP}$_0$ as follows. Let $\Omega$ consist of the set of $\omega$-sequences of ordinals $(\alpha _0,\alpha _1, \alpha _2,...)$, where each $\alpha _i < \epsilon _0$. Recall $\epsilon _0$ is the least fixed point of the equation $\omega ^{\alpha} = \alpha$. If the Cantor Normal Form of $\alpha$ is $\omega ^{\lambda _n} +...+ \omega ^{\lambda _1}$, then let $e(\alpha) := \lambda _1$ and set $e(0) = 0$.
\begin{definition} Ignatiev's universal frame is defined as $\mathcal{U} := \langle U, \{R_n\}_{n<\omega}\rangle$, with, \[U \; := \; \{\vec{\alpha} \in \Omega: \forall i < \omega, \alpha _{i+1} \leq e(\alpha _i)\};\]
\[\vec{\alpha}R_n\vec{\beta} \; :\Leftrightarrow \; (\forall m < n, \alpha _m = \beta _m \; \& \; \alpha _n > \beta _n).\]
\end{definition}
Notice that each point in $U$ can be seen as a finite, strictly decreasing sequence of ordinals less than $\epsilon _0$, as each sequence ends in an infinite tail of zeros. For a visualization of the frame, see Figure \ref{fig:ignatiev}.

\begin{figure}
\begin{center}
\input{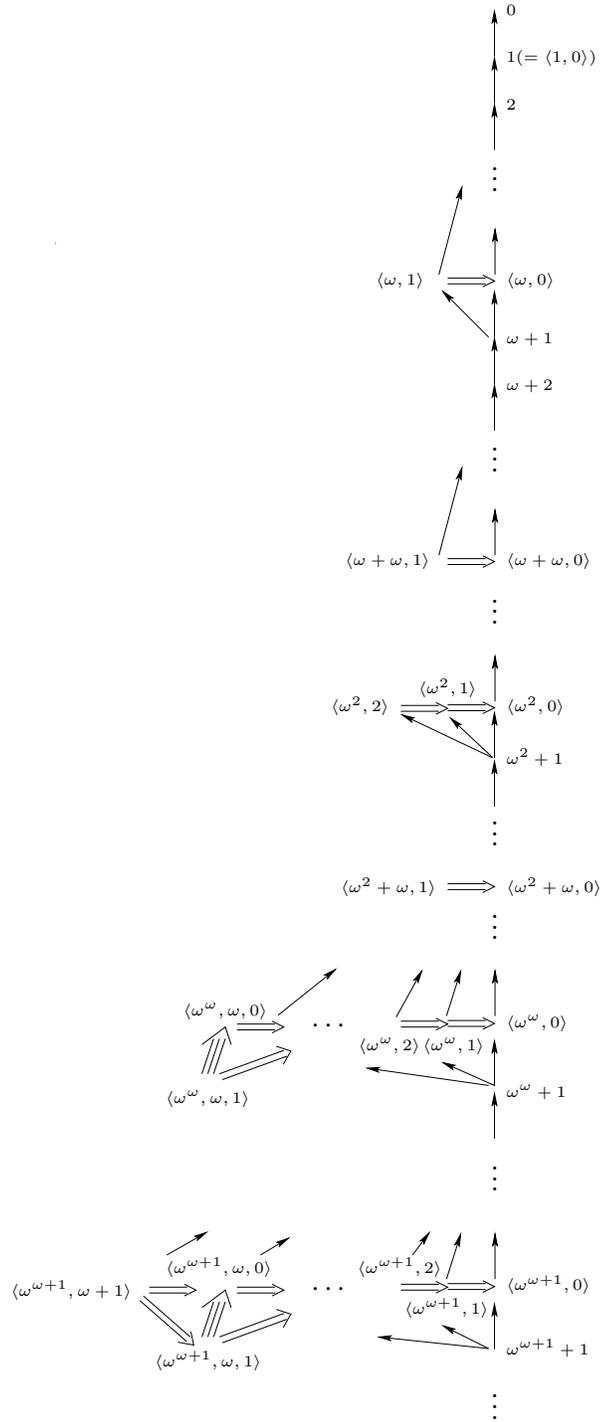}
\caption{The universal model for $\glp_0$} \label{fig:ignatiev}
\end{center}
\end{figure}

A point of the form $(\alpha,e(\alpha),e(e(\alpha)),...)$, where $\alpha _{i+1} = e(\alpha _i)$ for all $i$, is called a \textit{root point}, and is denoted by $\widehat{\alpha}$ when $\alpha$ is the first coordinate. Thus every coordinate of $\widehat{\alpha}$ is uniquely determined by $\alpha$. The following lemma is then obvious, given the definition of $\mathcal{U}$.
\begin{lemma} \label{compar} If $\widehat{\alpha}$ and $\widehat{\beta}$ are root points, then either $\widehat{\alpha} R_0 \widehat{\beta}$, $\widehat{\beta}R_0\widehat{\alpha}$, or $\widehat{\alpha} = \widehat{\beta}$.
\end{lemma}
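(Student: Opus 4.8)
The plan is to unwind the two definitions — that of a root point and that of the accessibility relation $R_0$ — and reduce the trichotomy to the trichotomy of ordinals in the first coordinate. First I would recall that a root point $\widehat{\alpha}$ is the sequence $(\alpha_0,\alpha_1,\alpha_2,\dots)$ with $\alpha_0 = \alpha$ and $\alpha_{i+1} = e(\alpha_i)$ for all $i$; the key observation (already noted in the text) is that every coordinate of $\widehat{\alpha}$ is then a function of $\alpha$ alone, namely $\alpha_i = e^i(\alpha)$ where $e^i$ denotes the $i$-fold iterate of $e$. So a root point is \emph{completely determined by its first coordinate}.

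Next I would split on the trichotomy for the ordinals $\alpha, \beta < \epsilon_0$ in the first coordinates. If $\alpha = \beta$, then since each root point is determined by its first coordinate we get $\widehat{\alpha} = \widehat{\beta}$ outright. If $\alpha > \beta$, I claim $\widehat{\alpha} R_0 \widehat{\beta}$: indeed $\vec{\gamma} R_0 \vec{\delta}$ holds by definition exactly when $\gamma_0 > \delta_0$ (the condition ``$\forall m < 0$'' is vacuous), and here $\widehat{\alpha}$ has first coordinate $\alpha$ and $\widehat{\beta}$ has first coordinate $\beta$, so $\alpha > \beta$ is precisely what is needed. Symmetrically, $\beta > \alpha$ gives $\widehat{\beta} R_0 \widehat{\alpha}$. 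This exhausts all cases, so the lemma follows.

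I do not expect a genuine obstacle here: the only thing to be careful about is the degenerate nature of the $R_0$ condition — that ``$\forall m < n$'' with $n = 0$ imposes no constraint, so $R_0$ compares exactly the zeroth coordinates — together with the fact, which is immediate from the definition, that distinct root points have distinct first coordinates and hence are $R_0$-comparable. No appeal to the side condition $\alpha_{i+1} \leq e(\alpha_i)$ defining $U$ is needed beyond checking that root points indeed lie in $U$, which holds since $\alpha_{i+1} = e(\alpha_i) \leq e(\alpha_i)$. This is exactly why the text calls the lemma ``obvious''.
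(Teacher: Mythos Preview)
Your proposal is correct and is precisely the unpacking of the definitions that the paper intends when it says the lemma is ``obvious, given the definition of $\mathcal{U}$''; the paper offers no proof beyond that remark, and your argument---reducing to ordinal trichotomy on the first coordinate via the vacuity of the $\forall m<0$ clause in $R_0$ and the fact that a root point is determined by its first coordinate---is exactly the intended one-line justification.
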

In addition to the more routine soundness, the following strong completeness theorem has also been proven using several different methods in the works cited above.
\begin{theorem} \label{arithcom} If \textnormal{\textbf{GLP}}$_0 \nvdash A$, then there is a root point $\widehat{\alpha} \in U$, such that $\mathcal{U},\widehat{\alpha} \nvDash A$.
\end{theorem}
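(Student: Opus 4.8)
The plan has two parts: first refute $A$ at some point of $\mathcal{U}$, then push the refutation down to a root point. For the first part, recall (this is implicit in the cited works, and can be reproved by a Solovay-style fixed-point construction carried out on $\mathcal{U}$ in place of a finite \textbf{GL}-tree) that $\textbf{GLP}_0\nvdash A$ already yields some $x\in U$ with $\mathcal{U},x\nvDash A$. The content is in the second part. Let $d$ be the modal depth of $A$, i.e.\ the maximal nesting of the modalities $[n]$ occurring in $A$. The lemma I would isolate is: for every $x\in U$ and every $d<\omega$ there is a root point $\widehat{\gamma}\in U$ that is $d$-bisimilar to $x$, and hence agrees with $x$ on all formulas of $\mathcal{D}$ of modal depth $\le d$. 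Granting this, we are done: choose $\widehat{\gamma}$ $d$-bisimilar to the witness $x$ above, so that $\mathcal{U},\widehat{\gamma}\nvDash A$.

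For the lemma, the guiding idea is that the depth-$d$ modal type of a point of $\mathcal{U}$ is governed by only finitely much data, and that every such truncated type is already realised at a root point. Truth of a depth-$d$ formula at $\vec{\alpha}$ depends only on the finite tree of points reachable from $\vec{\alpha}$ in at most $d$ steps along the relations $R_n$; in $\mathcal{U}$ this tree is determined once one knows, for the finitely many coordinates in play, whether the ordinal sitting there (and the $R_i$-ranks obtained by descending) exceeds $d$, together with finitely many initial values of the iterated $e$-function. One then replaces $\vec{\alpha}=(\alpha_0,\alpha_1,\dots)$ by $(\gamma_0,\gamma_1,\dots)$ obtained by collapsing each coordinate ``at depth $d$'' --- a large ordinal being sent to a small one whose $e$-chain agrees with it up to depth $d$ --- in such a way that the result is a \emph{root} point, i.e.\ $\gamma_{i+1}=e(\gamma_i)$ for all $i$, with the same depth-$d$ tree as $\vec{\alpha}$. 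The apparent obstruction --- that a point may have, say, large $R_0$-rank but $R_1$-rank $0$, which looks incompatible with being a root point --- disappears at bounded depth: an $R_n$-rank that is $\le d$ may be pushed to $0$ beyond the horizon, and one that is $>d$ is indistinguishable from being unbounded. Since the root points are parametrised by a single ordinal $<\epsilon_0$ whereas the depth-$d$ types form a finite set, each type is realised on an entire $R_0$-interval of root points.

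I expect the lemma --- more precisely, the bookkeeping showing that an arbitrary point's bounded-depth type can always be matched at a root point while respecting the admissibility conditions $\alpha_{i+1}\le e(\alpha_i)$ --- to be the main obstacle, and it is exactly here that the fine combinatorics of Ignatiev's frame (Cantor normal forms and the behaviour of $e$, as worked out in the cited detailed studies of $\mathcal{U}$) must be brought to bear, presumably via an induction on $d$ with a nested induction on the coordinates. The one point that needs genuine care is that the collapse on coordinate $i+1$ is constrained by the collapse on coordinate $i$ through the clause $\gamma_{i+1}\le e(\gamma_i)$, so the truncations must be chosen coherently down the sequence rather than coordinatewise in isolation.

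An alternative route that avoids isolating the lemma is to inspect Ignatiev's original arithmetical completeness proof for \textbf{GLP}: given $\textbf{GLP}_0\nvdash A$, it builds, via an arithmetical fixed point, a function whose provable limit behaviour is read off as an arithmetical realisation $*$ that is sound in a suitable theory $T$ with $T\nvdash A^*$, and with the realisation of each variable a disjunction $\bigvee_{y\Vdash p}\lambda_y$ of limit-statements. Tracking that construction, the limit of the function can be arranged to land on a root point of $\mathcal{U}$ --- essentially because the fresh bottom node one adjoins below the refuting configuration can be taken to have a sufficiently large first coordinate and hence be a root point --- and soundness in $T$ then gives $\mathcal{U},\widehat{\gamma}\nvDash A$ directly. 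Either way, the delicate step is the interaction between the ordinal parameters of $\mathcal{U}$ and the truncation forced by the bounded modal depth of $A$.
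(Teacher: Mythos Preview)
The paper does not prove this theorem at all: it is stated with the remark that it ``has also been proven using several different methods in the works cited above'' (namely \cite{Ign93}, \cite{BJV}, \cite{Ica09}), and no argument is given. So there is no proof in the paper to compare your proposal against.

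Your two-step plan --- first refute $A$ somewhere in $\mathcal{U}$, then transfer the refutation to a root point by a bounded-depth bisimulation argument --- is essentially the strategy used in the cited finitary treatments of $\mathcal{U}$ (especially \cite{BJV}), so the outline is sound. Two remarks are in order, though. First, your justification for step one is off: a ``Solovay-style fixed-point construction'' yields \emph{arithmetical} completeness, not completeness with respect to a Kripke-type frame. That $\mathcal{U}$ refutes every non-theorem of $\textbf{GLP}_0$ is established in the cited works by syntactic normal-form arguments (reducing closed $\textbf{GLP}$-formulas to Boolean combinations of iterated consistencies, i.e.\ ``worms'') together with an ordinal calculus for worms on $\mathcal{U}$; arithmetical fixed points play no role there. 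Second, since the language has infinitely many modalities, your depth parameter $d$ alone does not suffice: a depth-$d$ formula may still mention arbitrarily high $[n]$. You need to bound both the modal depth and the maximal index $N$ of modalities occurring in $A$, and prove that every point of $\mathcal{U}$ is $(N,d)$-bisimilar to a root point. With that correction your lemma is true, and indeed the combinatorics of Cantor normal forms and the function $e$ are exactly what make the ``coherent collapse'' you describe go through; this is worked out in detail in \cite{BJV} and \cite{Ica09}. Your alternative route via inspecting Ignatiev's arithmetical proof is more circuitous and conflates the arithmetical and frame-theoretic completeness results.
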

With these results we can now show that even with this much richer fragment the resulting provability logic is exactly the same as for the fragment with only the single $\Box$-operator (c.f. Theorem \ref{theorem:closedFragment}).
\begin{theorem} \label{theorem:ClosedFragmentGLP}
\textnormal{\textbf{PL}$_{\mathcal{D}}$(\textsf{PRA}) = \textbf{GL.3}}.
\end{theorem}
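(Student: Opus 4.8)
The plan is to mimic the structure of the proof of Theorem \ref{theorem:closedFragment} as closely as possible, replacing the universal frame $\langle\omega,>\rangle$ for $\mathcal{B}$ by Ignatiev's universal frame $\mathcal{U}$ for the richer fragment $\mathcal{D}$, and then showing that $\mathcal{U}$ collapses to a linear \gl-frame as far as the modal language with a single box (the diagonal $\Box := [0]$, say) is concerned. Concretely, I would first invoke the arithmetical completeness result, Theorem \ref{arithcom}, together with soundness, to obtain for each $A \in \mathcal{D}$ the equivalence $\textsf{PRA} \vdash A \Leftrightarrow \textbf{GLP}_0 \vdash A \Leftrightarrow \mathcal{U} \vDash A$; by Lemma \ref{compar} one can even restrict attention to the set of root points, which is linearly ordered by $R_0$. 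Here I need to be slightly careful: $\mathcal{U}$ itself is not image-finite (a point like $\widehat{\omega}$ has infinitely many $R_0$-successors), so Theorem \ref{theorem:mainTheorem} does not apply to $\mathcal{U}$ directly.

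The key move is therefore to pass to a sub(model/frame) that is image-finite while still satisfying Condition \eqref{equation:FroedelCondition}. The natural candidate is again $\langle\omega,>\rangle$ viewed as the submodel of $\mathcal{U}$ consisting of the root points $\widehat{0}, \widehat{1}, \widehat{2}, \dots$ — i.e. the points whose first coordinate is a natural number — with $R_0$ restricted to them. I would argue that for closed formulas of $\mathcal{D}$ the truth value at a root point $\widehat{\alpha}$ depends only on its "rank" in an appropriate ordinal sense, and that every formula of $\mathcal{D}$ falsifiable in $\mathcal{U}$ is already falsifiable at one of the finite root points $\widehat{n}$; the cleanest route is to show that the reduct of $\mathcal{U}$ to the single modality $[0]$, restricted to root points, is a \gl.3-frame, so that $\textbf{GLP}_0 \vdash A$ (for $A$ in the single-modality fragment) iff $\langle\omega,>\rangle \vDash A$, and then observe that the constants $s_i$ of the fragment $\mathcal{D}$ (the higher modalities applied to closed formulas) are themselves, at each root point, equivalent to closed formulas in $[0]$ alone — this is exactly the content of Ignatiev's analysis of $\mathcal{U}$, where the $n$-th coordinate is a "reflection rank" computable from the $0$-th. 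Hence every $A\in\mathcal{D}$ is, over root points, equivalent to some $A^\flat$ in the one-box closed fragment, reducing the problem to Theorem \ref{theorem:closedFragment}.

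Having re-established \eqref{equation:FroedelCondition} for the image-finite model $\langle\omega,>\rangle$ (with the $s_i$ interpreted via the corresponding $\mathcal{D}$-sentences, which are provable/refutable in lock-step with their one-box surrogates), the two side conditions of Theorem \ref{theorem:mainTheorem} are met exactly as in the proof of Theorem \ref{theorem:closedFragment}: image-finiteness is immediate, and the point $n$ is definable by $\Diamond^n\top \wedge \Box^{n+1}\bot$ using only the $[0]$ modality, a formula which lies in $\mathcal{D}$. Then Theorem \ref{theorem:mainTheorem} yields $\textbf{PL}_{\mathcal{D}}(\textsf{PRA}) = \mathcal{L}(\langle\omega,>\rangle) = \textbf{GL.3}$. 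For the $\supseteq$ direction one should also double-check soundness directly — that every theorem of \textbf{GL.3} is \textsf{PRA}-provable under every $\mathcal{D}$-realization — which follows since \textbf{GL.3} $\subseteq \textbf{PL}(\textsf{PRA})$ and $\mathcal{D}$-realizations are a special case of arbitrary ones.

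I expect the main obstacle to be the step in the middle paragraph: rigorously justifying that closed $\mathcal{D}$-formulas cannot "see" more of $\mathcal{U}$ than the finite root points $\widehat n$ do — in other words, that the higher modalities $[n]$ with $n\geq 1$, when applied only to closed formulas and evaluated at root points, collapse to Boolean combinations of iterated $[0]$-consistency statements. This requires a genuine appeal to the combinatorial structure of Ignatiev's frame (the constraint $\alpha_{i+1}\le e(\alpha_i)$ and the behavior of $R_n$ on root points), rather than a soft argument, and it is where the "richer fragment nonetheless stays linear" phenomenon really has to be earned.
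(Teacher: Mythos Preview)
Your worry in the final paragraph is justified: the step you flag fails outright. The claim that every $A\in\mathcal{D}$ falsifiable in $\mathcal{U}$ is already falsifiable at some finite root point $\widehat n$ is false. Take $A=[1]\bot$. At any finite root point $\widehat n=(n,0,0,\dots)$ the second coordinate is $0$, so there are no $R_1$-successors and $[1]\bot$ holds there; yet $[1]\bot$ fails at $\widehat\omega=(\omega,1,0,\dots)$, and arithmetically $\textsf{PRA}\nvdash[1]\bot$. Hence Condition~\eqref{equation:FroedelCondition} cannot hold for the submodel on $\{\widehat n:n<\omega\}$, and Theorem~\ref{theorem:mainTheorem} is not applicable along this route. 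The same example refutes the claim that every closed $\mathcal{D}$-formula is, over root points, equivalent to a one-box closed formula: $[1]\bot$ singles out the root points $\widehat\alpha$ with $e(\alpha)=0$, and no Boolean combination of the sets $\{\widehat\alpha:\alpha<k\}$ (which is all that one-box closed formulas can define there) does this.

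There is a second error, in your soundness check: $\textbf{GL.3}\not\subseteq\textbf{PL}(\textsf{PRA})$, since $\textbf{PL}(\textsf{PRA})=\textbf{GL}$ and the linearity axiom is not a $\textbf{GL}$-theorem. The inclusion $\textbf{GL.3}\subseteq\textbf{PL}_{\mathcal D}(\textsf{PRA})$ genuinely depends on the restriction to $\mathcal D$ and does not come for free from unrestricted soundness.

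The paper avoids Theorem~\ref{theorem:mainTheorem} entirely here. The inclusion $\textbf{PL}_{\mathcal{D}}(\textsf{PRA})\subseteq\textbf{GL.3}$ is immediate from $\mathcal{B}\subseteq\mathcal{D}$, Lemma~\ref{lemma:upperboundmethod}, and Theorem~\ref{theorem:closedFragment}. For the converse one only has to verify the linearity axiom under $\mathcal D$-realizations, which by arithmetical completeness of $\textbf{GLP}_0$ reduces to checking it in $\mathcal{U}$. If it failed at some $\vec\alpha$, then $\Box A\wedge\neg B$ and $\Box^+B\wedge\neg A$ would each be satisfiable in $\mathcal{U}$; by Theorem~\ref{arithcom} they are then satisfiable at root points $\widehat\beta$, $\widehat\gamma$, and by Lemma~\ref{compar} these are $R_0$-comparable, which yields an immediate contradiction in each of the three cases. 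No passage to an image-finite model is needed.
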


\begin{proof} By Theorem \ref{theorem:closedFragment}, by Lemma \ref{lemma:upperboundmethod} and by observing that $\Box$ is just $[0]$, it is clear that \textbf{PL}$_{\mathcal{D}}(\textsf{PRA}) \subseteq \textbf{GL.3}$. For the other inclusion, we must show, under the arithmetical interpretation, \[\textsf{PRA} \vdash \Box(\Box A \rightarrow B) \vee \Box (\Box ^+B \rightarrow A),\] for any $A,B \in \mathcal{D}$. However, this follows by arithmetical completeness and by the universality of Ignatiev's frame.

For, suppose $\mathcal{U}, \vec{\alpha} \vDash \Diamond (\Box A \wedge \neg B) \wedge \Diamond (\Box^+B \wedge \neg A)$, for some $\vec{\alpha}$. By Theorem \ref{arithcom} there are root points $\widehat{\beta}$ and $\widehat{\gamma}$, such that $\mathcal{U},\widehat{\beta} \vDash \Box A \wedge \neg B$, and $\mathcal{U},\widehat{\gamma} \vDash \Box^+B \wedge \neg A$. By Lemma \ref{compar}, either $\widehat{\beta} R_0 \widehat{\gamma}$, $\widehat{\gamma}R_0\widehat{\beta}$, or $\widehat{\beta} = \widehat{\gamma}$. All three lead to contradiction.
\end{proof}

\begin{section}{Non-Linear \textbf{GL}-frames}\label{section:NonLinear}

Theorems \ref{theorem:closedFragment} and \ref{theorem:ClosedFragmentGLP} suggest that it may not be straightforward to define a fragment whose associated restricted provability logic is anything other than \textbf{GL.3} or just \textbf{GL}. In this section we fill in this gap by giving some sufficient conditions on constants, so that we obtain logics of non-linear \textbf{GL}-frames. We will be working with generic fragments $\mathcal{F}_n$, with some finite number $n$ of constants: \[\mathcal{F}_n \; := \; s_1 \; \vert \; s_2 \; \vert \; ... \; \vert \; s_n \; \vert \; \bot \; \vert \; \mathcal{F}_n \rightarrow \mathcal{F}_n \; \vert \; \Box \mathcal{F}_n\] As before, we will be viewing formulas in $\mathcal{F}_n$ simultaneously as arithmetical formulas, where each $s_i$ is a specified formula in the language of arithmetic and $\Box$ is the standard provability predicate, and as modal formulas, where each $s_i$ is interpreted as a constant and $\Box$ is a normal modal operator.

\begin{subsection}{Fragments, Logics and Models} Let $\vec{{\bf s_i}}$ stand for the sentence $\bigwedge _{j \in J} s_{j+1} \wedge \bigwedge _{k \in K} \neg s_{k+1}$, where $J$ is the set of places in the binary expansion for $i$ with value 1, and $K$ is the complement of $J$ in $\{  0, \dots,i-1\}$. Then we define the following class of logics.

\begin{definition} 
The logic \textnormal{\textbf{FGL}}$_n$ is formulated in the language $\mathcal{F}_n$ and thus, contains no propositional variables. The axioms and rules are specified by the axioms and rules of \textbf{GL} together with the list of the $2^n$ many axioms below, one axiom for each Boolean combination of the $s_i$. The $B$ in these axioms stands for any formula that is a Boolean combination of formulas of the form $\Box^{\alpha}\bot$, where $\alpha < \omega +1$ and $\Box ^{\omega}\bot := \top$.
\begin{itemize}
\item[] $\Box( \vec{{\bf {s_0}}} \rightarrow B) \rightarrow \Box B$;
\item[] $\vdots$ 
\item[] $\Box( \vec{{\bf s_{2^{n}-1}}} \rightarrow B) \rightarrow \Box B$.
\end{itemize}
\end{definition}

These logics \textnormal{\textbf{FGL}}$_n$ come with an associated model, based on the following frames:
\begin{definition} The frame $\mathfrak{G}_n := \langle G_n, R_n \rangle$, where $G_n := \{\langle m,i \rangle : m \in \omega, i < 2^n\}$, and $\langle m, i \rangle R_n \langle p,j \rangle$ just in case $p < m$.
\end{definition}
The associated model defined on this frame is given \textit{via} the binary expansion, where $J_j$ is given as above, relative to $j$.
\begin{definition} $\mathfrak{G}_n^{\bullet}$ is the triple $\langle G_n, R_n, V_n \rangle$, where $V_n(s_j) = \{\langle m,i \rangle : i \in J_j\}$.
\end{definition}

For a visualization of $\mathfrak{G}_1^{\bullet}$, see Figure \ref{figu:pglsemantics}.

\begin{figure}
\begin{center}
\input{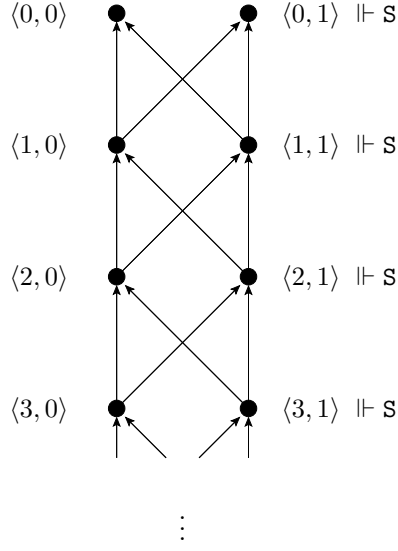}
\caption{The model $\mathfrak{G}_1^{\bullet}$}\label{figu:pglsemantics}
\end{center}
\end{figure}

\begin{theorem} \label{theorem:frodelcompleteness} For all formulas $A \in \mathcal{F}_n$, \textnormal{\textbf{FGL}}$_n \vdash A$, if and only if $\mathfrak{G}_n^{\bullet} \vDash A$.
\end{theorem}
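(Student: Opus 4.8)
The plan is to prove both directions by relating provability in $\textbf{FGL}_n$ to validity in the model $\mathfrak{G}_n^{\bullet}$, using the fact that $\mathfrak{G}_n$ is essentially a finite-width ``fan'' over the linear frame $\langle\omega,>\rangle$ whose closed-fragment behaviour is governed by $\textbf{GL.3}$. For soundness ($\textbf{FGL}_n\vdash A\Rightarrow \mathfrak{G}_n^{\bullet}\vDash A$), I would first check that $\mathfrak{G}_n$ is a $\textbf{GL}$-frame — it is irreflexive and transitive by the definition $\langle m,i\rangle R_n\langle p,j\rangle\Leftrightarrow p<m$, and it is ``finite'' in the sense relevant to $\textbf{GL}$ (no infinite ascending $R_n$-chains), so the usual $\textbf{GL}$ axioms and rules preserve validity. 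Then I would verify that each of the $2^n$ extra axioms $\Box(\vec{\mathbf{s_i}}\to B)\to\Box B$ is valid in $\mathfrak{G}_n^{\bullet}$: fix a point $\langle m,i\rangle$ satisfying the antecedent, take any $R_n$-successor $\langle p,j\rangle$ (so $p<m$); since $\langle p,j\rangle$ and $\langle p,i\rangle$ agree on the rank-parameter $p$, any Boolean combination $B$ of formulas $\Box^\alpha\bot$ (whose truth at a point depends only on its first coordinate $p$) has the same value at both; and the point $\langle p,i\rangle$ is itself an $R_n$-successor of $\langle m,i\rangle$ with $\langle p,i\rangle\Vdash\vec{\mathbf{s_i}}$ by the valuation $V_n$, so the antecedent gives $\langle p,i\rangle\Vdash B$, hence $\langle p,j\rangle\Vdash B$; as $\langle p,j\rangle$ was arbitrary, $\langle m,i\rangle\Vdash\Box B$.

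For completeness ($\mathfrak{G}_n^{\bullet}\vDash A\Rightarrow\textbf{FGL}_n\vdash A$) I would argue contrapositively and reduce to the known completeness of $\textbf{GL.3}$ for $\langle\omega,>\rangle$ (Theorem \ref{theorem:closedFragment} and the facts cited there). The key observation is that every formula $A\in\mathcal{F}_n$ can, provably in $\textbf{FGL}_n$, be brought into a normal form: first do a case split over the $2^n$ mutually exclusive and exhaustive Boolean types $\vec{\mathbf{s_i}}$, so it suffices to prove each $\vec{\mathbf{s_i}}\to A$; and under the hypothesis $\vec{\mathbf{s_i}}$ the added axioms let us ``collapse'' $\Box$ onto a copy of the closed fragment $\mathcal{B}$ — intuitively, once we have committed to the $i$-th column, a box behaves exactly as it does on $\langle\omega,>\rangle$ and on Boolean combinations of $\Box^\alpha\bot$, because the axiom $\Box(\vec{\mathbf{s_i}}\to B)\to\Box B$ is precisely what is needed to recover the linearity/rank principles inside column $i$. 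Thus $\vec{\mathbf{s_i}}\to A$ reduces to a theorem of $\textbf{GL.3}$ in the purely modal (constant-free) fragment, and then $\textbf{GL.3}\vdash$(that formula) $\Leftrightarrow\langle\omega,>\rangle\vDash$(that formula) $\Leftrightarrow$ the $i$-th column of $\mathfrak{G}_n^{\bullet}$ validates it. Assembling the $2^n$ columns recovers $\mathfrak{G}_n^{\bullet}\vDash A\Rightarrow\textbf{FGL}_n\vdash A$.

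I expect the main obstacle to be making precise and rigorous the ``collapse'' step in the completeness direction: showing that, provably in $\textbf{FGL}_n$ and under a fixed Boolean type $\vec{\mathbf{s_i}}$, every subformula of $A$ is equivalent to a Boolean combination of the constants and of formulas $\Box^\alpha\bot$ with $\alpha<\omega+1$, so that the $i$-th axiom actually applies. This is where the restriction on the form of $B$ in the axiom schema (Boolean combinations of $\Box^\alpha\bot$) has to be exploited carefully — one needs a lemma, proved by induction on modal depth inside $\textbf{FGL}_n$, that every formula of $\mathcal{F}_n$ is $\textbf{FGL}_n$-provably equivalent to a disjunction over types $\vec{\mathbf{s_i}}$ of such Boolean combinations (this is the syntactic counterpart of ``truth at a point in $\mathfrak{G}_n^{\bullet}$ depends only on the pair (rank, column)''). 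Once that normal-form lemma is in hand, both directions follow by a straightforward bookkeeping argument, with the $\Leftarrow$ direction of Theorem \ref{theorem:frodelcompleteness} being essentially immediate from soundness of the axioms and the $\Rightarrow$ direction reducing, column by column, to the already-established completeness of $\textbf{GL.3}$ for the frame $\langle\omega,>\rangle$.
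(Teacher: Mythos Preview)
Your proposal is essentially correct and identifies the right key lemma (the normal form: every $A\in\mathcal{F}_n$ is $\textbf{FGL}_n$-provably equivalent to a Boolean combination of the $s_j$ and of $\Box^\alpha\bot$). There are two small slips worth fixing. First, in the soundness argument you use the same letter $i$ for the column of the point $\langle m,i\rangle$ and for the index of the axiom $\Box(\vec{\mathbf{s_i}}\to B)\to\Box B$; to verify axiom index $k$ at an arbitrary point you should pass to the successor $\langle p,k\rangle$ (which satisfies $\vec{\mathbf{s_k}}$), not $\langle p,i\rangle$. Second, in your last sentence you have the directions reversed: soundness is the $\Rightarrow$ direction and is what follows from validity of the axioms; completeness is $\Leftarrow$ and is what your column-by-column reduction establishes.

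On the comparison with the paper: the paper also isolates the normal-form lemma (its Lemma~\ref{lemma:normalforms}), but then finishes completeness differently. Instead of reducing each column to $\textbf{GL.3}$, it proves a box-reflection lemma ($\textbf{FGL}_n\vdash\Box A\Rightarrow\textbf{FGL}_n\vdash A$) and uses the sharper form of the normal form, namely $\textbf{FGL}_n\vdash\Box A\leftrightarrow\Box^\alpha\bot$ for a single $\alpha$, together with soundness, to locate a falsifying point in $\mathfrak{G}_n^\bullet$ directly. Your route is legitimate and perhaps more transparent semantically (each column really is a copy of $\langle\omega,>\rangle$ for closed formulas, via the obvious $p$-morphism $\langle m,j\rangle\mapsto m$), but it imports the external completeness of $\textbf{GL}$/$\textbf{GL.3}$ for the closed fragment. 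The paper's route is more self-contained: once the normal form and box-reflection are in hand, no outside completeness theorem is invoked.
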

\begin{proof} [Proof Sketch]
The full proof for the case of $\mathcal{F}_1$ is established in \cite{Joo04}. Here we give a sketch for the general case. Soundness is routine. For completeness, we use the following two lemmata.
\begin{lemma} \label{lemma:normalforms} Each $A \in \mathcal{F}_n$ is equivalent in \textnormal{\textbf{FGL}}$_n$ to a Boolean combination of formulas of the form $s_1$,..., $s_n$, or $\Box ^{\alpha}\bot$. In particular \textnormal{\textbf{FGL}}$_n \vdash \Box A \leftrightarrow \Box^{\alpha}\bot$ for some $\alpha < \omega +1$.
\end{lemma}
\begin{lemma} \label{lemmabox} If \textnormal{\textbf{FGL}}$_n \vdash \Box A$, then \textnormal{\textbf{FGL}}$_n \vdash A$.
\end{lemma}
These lemmata are straightforwardly proven by manipulation of modal normal forms. Completeness is then clear. If \textbf{FGL}$_n \nvdash A$, then by Lemma \ref{lemmabox}, \textbf{FGL}$_n \nvDash \Box A$, and by Lemma \ref{lemma:normalforms}, \textbf{FGL}$_n \vdash \Box A \leftrightarrow \Box ^{\alpha} \bot$, for some $\alpha < \omega$ (in particular $\alpha \neq \omega$). By soundness, for any point $\langle m,i \rangle \in G_n$ we know $\mathfrak{G}_n^{\bullet}, \langle m,i \rangle \vDash \Box A \leftrightarrow \Box ^{\alpha} \bot$. Certainly $\mathfrak{G}_n^{\bullet}, \langle \alpha, 0 \rangle \nvDash \Box ^{\alpha} \bot$, so $\mathfrak{G}_n^{\bullet}, \langle \alpha, 0 \rangle \nvDash \Box A$. That, in turn, means for some $\langle \beta, j \rangle$ with $\beta < \alpha$, we have $\mathfrak{G}_n^{\bullet}, \langle \beta , j \rangle \vDash \neg A$. So $A$ is falsified on $\mathfrak{G}_n^{\bullet}$.
\end{proof}
\end{subsection}
\begin{subsection}{Conditions for completeness} \label{subsection:conditions} Suppose we have a given theory $T$ and some fragment $\mathcal{F}_n$, and we would like a characterization of \textbf{PL}$_{\mathcal{F}_n}(T)$. In Section \ref{fragmentslogics} we showed that if condition (1) holds for some logic \textbf{L} and model $\mathcal{M}$, then Theorem \ref{theorem:mainTheorem} will follow. Recall Condition (1):
\[
T \vdash A \ \Leftrightarrow \ \textbf{L} \vdash A \ \Leftrightarrow \ \mathcal{M} \vDash A.
\]
To show (1) holds for this case, one merely needs to show arithmetical soundness and completeness of \textbf{L} for $T$ . However, given Lemmata \ref{lemma:normalforms} and \ref{lemmabox}, arithmetical completeness of \textbf{L} depends only on arithmetical soundness of \textbf{L}. 

To see this, suppose \textbf{FGL}$_n \nvDash A$. Then by Lemma \ref{lemma:normalforms}, \textbf{FGL}$_n \nvDash \Box A$. Since \textbf{FGL}$_n \vDash \Box A \leftrightarrow \Box ^{\alpha} \bot$, for $\alpha \neq \omega$, as long as we have soundness of \textbf{L}, $T \vdash \Box A \leftrightarrow \Box ^{\alpha} \bot$, under the arithmetical interpretation. Now, if moreover $T$ is a sound theory in the sense that it does not prove any false statements we get $T \nvdash \Box A$, from which it follows $T \nvdash A$.

Consequently, the following is a corollary of Theorem \ref{theorem:mainTheorem} and Theorem \ref{theorem:frodelcompleteness}. Note that both image finiteness and definability of the states in the model $\mathfrak{G}_n^{\bullet}$ are evident.

\begin{corollary} \label{corollary:sufficient} \textnormal{\textbf{PL}}$_{\mathcal{F}_n}(T) = \mathcal{L}(\mathfrak{G}_n)$ whenever $[\textnormal{\textbf{FGL}}_n \vdash A \  \Rightarrow T \vdash A.]$
\end{corollary}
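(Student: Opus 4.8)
The plan is to verify the hypotheses of Theorem~\ref{theorem:mainTheorem} for the fragment $\mathcal{F}_n$, the model $\mathfrak{G}_n^{\bullet}$ and the frame $\mathfrak{G}_n$, using $\textbf{FGL}_n$ as the auxiliary logic $\textbf{L}$. Concretely, I would establish Condition~\eqref{equation:FroedelCondition} in the strong form $T \vdash A \ \Leftrightarrow\ \textbf{FGL}_n \vdash A \ \Leftrightarrow\ \mathfrak{G}_n^{\bullet} \vDash A$ for every $A \in \mathcal{F}_n$, and then check image-finiteness and point-definability of $\mathfrak{G}_n^{\bullet}$; the identity $\textnormal{\textbf{PL}}_{\mathcal{F}_n}(T) = \mathcal{L}(\mathfrak{G}_n)$ is then immediate from Theorem~\ref{theorem:mainTheorem}.

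The rightmost equivalence $\textbf{FGL}_n \vdash A \Leftrightarrow \mathfrak{G}_n^{\bullet} \vDash A$ is exactly Theorem~\ref{theorem:frodelcompleteness}, so nothing is needed there. The two side conditions are equally cheap: $\mathfrak{G}_n^{\bullet}$ is image-finite because the successor set of $\langle m,i\rangle$ is $\{\langle p,j\rangle : p<m\}$, which is finite; and $\langle m,i\rangle$ is uniquely defined in $\mathcal{F}_n$ by $D_{\langle m,i\rangle} := \vec{{\bf s_i}} \wedge \Diamond^{m}\top \wedge \Box^{m+1}\bot$, where the last two conjuncts pin down the rank $m$ (hence the first coordinate) exactly as in the proof of Theorem~\ref{theorem:closedFragment}, and $\vec{{\bf s_i}}$ pins down the second coordinate $i$ by the definition of $V_n$. (Here $\Diamond$, $\neg$ and $\top$ are the usual abbreviations, which are available inside $\mathcal{F}_n$.)

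So the real content is the arithmetical equivalence $T \vdash A \Leftrightarrow \textbf{FGL}_n \vdash A$. The direction $\textbf{FGL}_n \vdash A \Rightarrow T \vdash A$ is precisely the hypothesis of the corollary, i.e.\ arithmetical soundness of $\textbf{FGL}_n$ for $T$. For the converse I would argue contrapositively, exploiting the collapse provided by Lemmata~\ref{lemma:normalforms} and~\ref{lemmabox}. Assume $\textbf{FGL}_n \nvdash A$. Then $\textbf{FGL}_n \nvdash \Box A$ by Lemma~\ref{lemmabox}, while by Lemma~\ref{lemma:normalforms} $\textbf{FGL}_n \vdash \Box A \leftrightarrow \Box^{\alpha}\bot$ for some $\alpha \le \omega$; since $\Box^{\omega}\bot = \top$ is a theorem of $\textbf{FGL}_n$ but $\Box A$ is not, $\alpha$ must be finite. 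Soundness now gives $T \vdash \Box A \leftrightarrow \Box^{\alpha}\bot$ under the arithmetical reading, and since $\Box^{\alpha}\bot$ is, for finite $\alpha$, a false $\Sigma_1$ sentence (literally $0=1$ when $\alpha = 0$), the $\Sigma_1$-soundness of $T$ forces $T \nvdash \Box^{\alpha}\bot$, hence $T \nvdash \Box A$, hence $T \nvdash A$ by provable $\Sigma_1$-completeness (formalized necessitation). This establishes \eqref{equation:FroedelCondition}, and Theorem~\ref{theorem:mainTheorem} then finishes the proof.

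I do not expect a genuine obstacle here, since the corollary is essentially an assembly of Theorem~\ref{theorem:mainTheorem}, Theorem~\ref{theorem:frodelcompleteness}, and Lemmata~\ref{lemma:normalforms}--\ref{lemmabox}. The one point that needs care is the bookkeeping of the standing assumptions on $T$: the completeness direction above silently uses both that $T$ is $\Sigma_1$-sound and that $T$ is strong enough for provability to be provably $\Sigma_1$-complete (so that $T \vdash A$ entails $T \vdash \Box A$), i.e.\ that $T$ lies in the class of theories of Footnote~\ref{footnote:ClassOfTheories}. This hypothesis should be made explicit in the statement, or at least recalled at the outset of the proof.
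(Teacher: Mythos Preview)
Your proposal is correct and follows essentially the same route as the paper: verify Condition~\eqref{equation:FroedelCondition} via Theorem~\ref{theorem:frodelcompleteness} plus the reduction of arithmetical completeness to arithmetical soundness using Lemmata~\ref{lemma:normalforms} and~\ref{lemmabox}, then invoke Theorem~\ref{theorem:mainTheorem}. Your treatment is in fact slightly more explicit than the paper's---you spell out the defining formula $D_{\langle m,i\rangle}$ and isolate the needed hypotheses on $T$ ($\Sigma_1$-soundness and $\Sigma_1$-completeness) where the paper simply appeals to ``soundness'' and leaves definability as ``evident.''
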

In Section \ref{qgl}, we shall see that each of these frames $\mathfrak{G}_n$ has a simple axiomatization. For the rest of this section, we exhibit a suitable constant for the case of $\mathcal{F}_1$.
\end{subsection}
\begin{subsection}{A Constant for \textsf{I$\Sigma _1$}.}
Recall \textsf{I$\Sigma _1$} is the theory \textsf{Q} (\cite{Tarski}) along with induction over $\Sigma _1$ formulas. This theory is finitely axiomatizable, so let $\sigma$ stand for the sentence axiomatizing it. We then define the fragment $\mathcal{Q}$ as a special case of $\mathcal{F}_1$:
\[\mathcal{Q} \; := \; \sigma \; \vert \; \bot \; \vert \; \mathcal{Q} \rightarrow \mathcal{Q} \; \vert \; \Box Q\]
Our theory $T$ will be Primitive Recursive Arithmetic (\textsf{PRA}), essentially just \textsf{Q} with function symbols for all of the primitive recursive functions and induction over $\Delta _0$ formulas. The relationship between \textsf{I$\Sigma _1$} and \textsf{PRA} is well studied and understood (\cite{Min72}, \cite{Par72}, \cite{Bek96}). By Corollary \ref{corollary:sufficient}, we need to show that \textbf{FGL}$_n$ is sound with respect to \textsf{PRA}. It is already well known that \textbf{PL}(\textsf{PRA}) = \textbf{GL}, so certainly all the axioms and rules of \textbf{GL} are sound. We need only observe the following also hold:
\begin{itemize}
\item[(i)] \textsf{PRA} $\vdash \Box(\sigma \rightarrow B) \rightarrow \Box B$,
\item[(ii)] \textsf{PRA} $\vdash \Box(\neg \sigma \rightarrow B) \rightarrow \Box B$.
\end{itemize}
In fact, item (i) is a direct consequence of what is known as Parson's Theorem (named after Charles Parsons, but discovered independently by Grigori Mints and Gaisi Takeuti), which says that \textsf{I}$\Sigma _1$ is $\Pi _2$-conservative over \textsf{PRA}. In \cite{Bek96} it is shown that this theorem is in fact formalizable in \textsf{PRA}, which gives us (i).
\begin{theorem} [Parson's Theorem] \textnormal{\textsf{PRA}} $\vdash \forall ^{\Pi _2} B \  (\Box  (\sigma \rightarrow B) \rightarrow \Box B)$.
\end{theorem}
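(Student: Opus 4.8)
The statement to prove is Parson's Theorem in the formalized form:
\[
\textnormal{\textsf{PRA}} \vdash \forall ^{\Pi _2} B \  (\Box  (\sigma \rightarrow B) \rightarrow \Box B).
\]

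The plan is to reduce this to the well-known Parsons–Mints–Takeuti conservativity result, in its formalized version. Recall that $\sigma$ is a single sentence axiomatizing $\textsf{I}\Sigma_1$, so $\Box(\sigma \to B)$ says, under the arithmetical interpretation, that $\textsf{PRA}$ proves $\sigma \to B$, which (since $\textsf{I}\Sigma_1 = \textsf{PRA} + \sigma$ up to the obvious translation, and $\textsf{PRA} \vdash \sigma$ is false but $\textsf{I}\Sigma_1$ is $\textsf{PRA}+\sigma$) is equivalent to $\textsf{I}\Sigma_1 \vdash B$. Parsons' theorem states that if $B$ is $\Pi_2$ and $\textsf{I}\Sigma_1 \vdash B$, then $\textsf{PRA} \vdash B$; that is exactly the implication $\Box(\sigma \to B) \to \Box B$ read at the meta-level. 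So the content of the theorem is that this whole argument can be carried out \emph{inside} $\textsf{PRA}$, with $B$ a free $\Pi_2$ variable.

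First I would invoke the formalized conservativity result established in \cite{Bek96}: there it is shown (via a formalized cut-elimination / Herbrand-style argument, or via a formalized model-theoretic argument over definable cuts) that $\textsf{PRA}$ proves ``for every $\Pi_2$ sentence $\psi$, if $\textsf{I}\Sigma_1 \vdash \psi$ then $\textsf{PRA} \vdash \psi$''. Concretely, one fixes a $\Pi_2$ template $\forall x \exists y\, \delta(x,y,\vec{p})$ with $\delta \in \Delta_0$ and parameters $\vec p$, and the proof extracts from a purported $\textsf{I}\Sigma_1$-proof a primitive recursive bounding function $f$ together with a $\textsf{PRA}$-proof of $\forall x\, \delta(x, f(x), \vec p)$; the extraction procedure itself is primitive recursive in the code of the $\textsf{I}\Sigma_1$-proof, and $\textsf{PRA}$ verifies that it does what it should. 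Second, I would match the bookkeeping: the provability predicate $\Box$ here is $\textsf{PRA}$'s own provability predicate, the sentence $\sigma$ axiomatizes $\textsf{I}\Sigma_1$ over the base, and $\Box(\sigma \to B)$ is provably (in $\textsf{PRA}$, using only propositional reasoning under $\Box$) equivalent to $\textup{Bew}_{\textsf{I}\Sigma_1}(\ulcorner B \urcorner)$. Plugging the formalized Parsons statement in for $\psi := B$ then yields $\textsf{PRA} \vdash \forall^{\Pi_2} B\,(\textup{Bew}_{\textsf{I}\Sigma_1}(\ulcorner B\urcorner) \to \Box B)$, which is the claim.

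The main obstacle is not any new mathematics but the \emph{uniformity in $B$}: Parsons' theorem as usually stated is a schema (one statement per $\Pi_2$ sentence), whereas here we need a single $\textsf{PRA}$-sentence quantifying over all $\Pi_2$ $B$. This is precisely the strengthening carried out in \cite{Bek96}, and it works because the proof-transformation underlying Parsons' theorem is given by an explicit primitive recursive operation on proof codes that does not depend on the particular $\Pi_2$ formula being proved — so $\textsf{PRA}$ can reason about it with $B$ as a variable, using a partial truth predicate for $\Pi_2$ (respectively $\Delta_0$) formulas, which is available since $\textsf{PRA}$ proves $\Sigma_1$-induction's worth of facts about $\Delta_0$-truth. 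I would therefore simply cite \cite{Bek96} for the formalized, uniform version and confine the argument here to translating between the modal-arithmetical notation ($\Box$, $\sigma$) and the statement proved there.
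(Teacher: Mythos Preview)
Your proposal is correct and follows exactly the paper's approach: the paper does not prove this statement either but simply cites \cite{Bek96} for the fact that Parsons' $\Pi_2$-conservativity of $\textsf{I}\Sigma_1$ over \textsf{PRA} is formalizable in \textsf{PRA}. Your additional remarks about the uniformity in $B$ and the equivalence of $\Box(\sigma\to B)$ with $\textup{Bew}_{\textsf{I}\Sigma_1}(\ulcorner B\urcorner)$ are accurate elaborations of what that citation is covering.
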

So this certainly holds for $\mathcal{B}(\Sigma _1)$ formulas consisting of Boolean combinations of formulas of the form $\Box ^{\alpha} \bot$. As for (ii), it is shown in \cite{Joo04} that the negation of the sentence axiomatizing \textsf{I}$\Sigma _1$ is $\Pi _3$-conservative over \textsf{PRA}. That is, we have the following lemma:
\begin{lemma} 
\textnormal{\textsf{PRA}} $\vdash \forall ^{\Pi _3} B \ (\Box (\neg \sigma \rightarrow B) \rightarrow \Box B)$.
\end{lemma}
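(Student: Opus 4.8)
The plan is to prove that $\textsf{PRA} \vdash \forall^{\Pi_3} B\ (\Box(\neg\sigma \rightarrow B) \rightarrow \Box B)$, i.e. that the negation of the finite axiomatization $\sigma$ of $\textsf{I}\Sigma_1$ is (provably in $\textsf{PRA}$) $\Pi_3$-conservative over $\textsf{PRA}$. Unravelling the modal abbreviation, what must be shown is that if $\textsf{PRA}$ proves $\neg\sigma \rightarrow B$ for some $\Pi_3$ sentence $B$, then $\textsf{PRA}$ already proves $B$; and moreover that this implication is itself formalizable and provable inside $\textsf{PRA}$. Equivalently, $\textsf{PRA} + \neg\sigma$ is $\Pi_3$-conservative over $\textsf{PRA}$, provably so. The key observation driving everything is that $\neg\sigma$ asserts the \emph{failure} of $\Sigma_1$-induction, and failure of induction is a low-complexity statement: $\neg\sigma$ is (equivalent to) a single $\Sigma_2$-ish assertion that there is a $\Sigma_1$ formula and a witness where induction breaks, but the crucial structural fact is that $\textsf{PRA}$ together with $\neg\sigma$ does not gain any genuinely new $\Pi_3$ consequences because the extra axiom can be "absorbed" by a cut-elimination / partial-truth-definition argument.

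The main steps I would carry out are as follows. First, reduce to the contrapositive reflection form: it suffices to show $\textsf{PRA} \vdash \mathsf{Con}_{\Pi_3}(\textsf{PRA}) \rightarrow \mathsf{Con}_{\Pi_3}(\textsf{PRA} + \neg\sigma)$, or more directly, to produce inside $\textsf{PRA}$ a $\Pi_3$-preserving interpretation-like transformation of $\textsf{PRA}+\neg\sigma$-proofs into $\textsf{PRA}$-proofs. Second — and this is the heart — use the fact, due to (and cited from) \cite{Joo04}, that $\neg\sigma$ is $\Pi_3$-conservative over $\textsf{PRA}$; so the real work is checking that the argument establishing this is \emph{formalizable in $\textsf{PRA}$ itself}, exactly paralleling what Beklemishev \cite{Bek96} does for Parsons' Theorem (item (i) above). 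The standard route: given a purported $\textsf{PRA}$-proof of $\neg\sigma \rightarrow B$ with $B \in \Pi_3$, write $B = \forall x\, \exists y\, \forall z\, \delta(x,y,z)$ with $\delta \in \Delta_0$; we want a $\textsf{PRA}$-proof of $B$. Since $\textsf{PRA}$ proves $\sigma \rightarrow B'$ for the relevant instances (Parsons/$\Pi_2$-conservativity handles the $\sigma$ side up to $\Pi_2$, and one pushes to $\Pi_3$ by a relativization / Herbrandization of the innermost quantifier), it remains to handle the case $\neg\sigma$; but under $\neg\sigma$ one has access to a concrete failure of induction, and a Herbrand/$\Pi_3$-Herbrand analysis of the $\textsf{PRA}$-proof lets one extract the needed $\Sigma_2$-definable Skolem function for $B$ uniformly, with the whole extraction provably total in $\textsf{PRA}$. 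Third, assemble: combine the $\sigma$-case and the $\neg\sigma$-case by cases on $\sigma$, both steps being $\textsf{PRA}$-provable, to conclude $\textsf{PRA} \vdash B$ from $\textsf{PRA} \vdash (\neg\sigma \rightarrow B)$, and formalize this meta-argument as the single arithmetized statement $\forall^{\Pi_3} B\ (\Box(\neg\sigma\rightarrow B) \rightarrow \Box B)$.

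I expect the main obstacle to be the jump from $\Pi_2$ to $\Pi_3$ in the conservativity, formalized inside $\textsf{PRA}$. Parsons' Theorem gives $\Pi_2$-conservativity of $\textsf{I}\Sigma_1$ over $\textsf{PRA}$, and Beklemishev's formalization handles that; but here we need a $\Pi_3$ statement and we are extending by $\neg\sigma$ rather than by $\sigma$. The delicate point is that the extra innermost universal quantifier in a $\Pi_3$ sentence must be absorbed — one wants to say that a $\Sigma_2$ Skolem term for the middle existential is enough, but totality of that term, and the uniformity of its extraction from the given proof, have to be verified in $\textsf{PRA}$, which has only $\Delta_0$-induction (plus the PR function symbols). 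The resolution is to lean on the fact that $\textsf{PRA}$ proves the totality of all primitive recursive functions and proves a $\Pi_2$-reflection-style principle for itself strong enough to run the Herbrand analysis, together with the explicit statement from \cite{Joo04}; in effect one only has to \emph{assemble} cited ingredients — Parsons' Theorem formalized (\cite{Bek96}), the $\Pi_3$-conservativity of $\neg\sigma$ (\cite{Joo04}) — rather than reprove them, so the proof reduces to carefully matching complexities and checking that each cited result is available in the required formalized form.
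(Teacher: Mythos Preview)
The paper itself gives no proof of this lemma; it simply cites \cite{Joo04}. Insofar as your proposal ultimately also defers to \cite{Joo04} (``the $\Pi_3$-conservativity of $\neg\sigma$ (\cite{Joo04})''), you match the paper. However, the reconstruction you sketch around that citation contains a genuine gap.

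Your case-split on $\sigma$ is structurally fine: from $\textsf{PRA}\vdash \neg\sigma\to B$ one gets $\textsf{PRA}+\neg\sigma\vdash B$ for free, so the remaining task is exactly to show $\textsf{PRA}+\sigma\vdash B$, i.e.\ $\textsf{I}\Sigma_1\vdash B$. But your claim that ``Parsons/$\Pi_2$-conservativity handles the $\sigma$ side'' is a misapplication. Parsons' theorem transports $\Pi_2$-theorems \emph{from} $\textsf{I}\Sigma_1$ \emph{down to} $\textsf{PRA}$; it says nothing about producing $\textsf{I}\Sigma_1$-proofs of a given $\Pi_3$ sentence $B$ from the hypothesis $\textsf{PRA}\vdash\sigma\vee B$. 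Indeed, since $\textsf{I}\Sigma_1\vdash\sigma$, the disjunction $\sigma\vee B$ is trivially provable in $\textsf{I}\Sigma_1$ regardless of $B$, so that route carries no information. Your ``Herbrand analysis under $\neg\sigma$'' step is likewise idle: under $\neg\sigma$ you already have $B$ by assumption, so nothing needs extracting there. In short, the case split does not reduce the difficulty --- the implication ``$\textsf{PRA}+\neg\sigma\vdash B\ \Rightarrow\ \textsf{I}\Sigma_1\vdash B$'' is \emph{equivalent} to the conservativity you are trying to prove, and your sketch does not supply an argument for it.

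The actual proof in \cite{Joo04} must exploit the specific shape of $\neg\sigma$ (that it pins down a definable least counterexample to $\Sigma_1$-induction) in a way your outline does not capture; neither Parsons' theorem nor a generic Herbrand analysis of the $\textsf{PRA}$-proof of $\neg\sigma\to B$ is enough on its own.
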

Thus, we can state the following corollary:
\begin{corollary} \label{corollary:logicofq} \textnormal{\textbf{PL}}$_{\mathcal{Q}}(\textnormal{\textsf{PRA}}) = \mathcal{L}(\mathfrak{G}_1)$.
\end{corollary}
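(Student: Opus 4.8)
The plan is simply to instantiate Corollary \ref{corollary:sufficient} with $n=1$ and $T=\textsf{PRA}$, reading the single constant $s_1$ as $\sigma$. Since $\mathcal{Q}$ is nothing but $\mathcal{F}_1$ under this reading of its constant, and the frame logic $\mathcal{L}(\mathfrak{G}_1)$ is insensitive to the arithmetical interpretation, the only thing to check is the side hypothesis of that corollary: that $\textbf{FGL}_1\vdash A$ implies $\textsf{PRA}\vdash A$ (under the arithmetical interpretation) for every $A\in\mathcal{F}_1$. In other words, the whole content is the arithmetical soundness of $\textbf{FGL}_1$ over $\textsf{PRA}$.

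To establish that soundness, recall $\textbf{FGL}_1$ is $\textbf{GL}$ together with the two schemata $\Box(\sigma\to B)\to\Box B$ and $\Box(\neg\sigma\to B)\to\Box B$, with $B$ ranging over Boolean combinations of formulas $\Box^{\alpha}\bot$ for $\alpha<\omega+1$ (and $\Box^{\omega}\bot:=\top$). As $\textbf{PL}(\textsf{PRA})=\textbf{GL}$ is already known, every $\textbf{GL}$-theorem is $\textsf{PRA}$-provable under any realization, and both modus ponens and necessitation preserve $\textsf{PRA}$-provability; so it remains only to see that each instance of the two extra schemata is a theorem of $\textsf{PRA}$. For this, observe that $\Box^{n}\bot$ is $\Sigma_1$ for finite $n$ and $\Box^{\omega}\bot$ is $\top$, so every admissible $B$ is, provably in $\textsf{PRA}$, a Boolean combination of $\Sigma_1$ sentences and hence $\textsf{PRA}$-equivalent to a $\Pi_2$ sentence. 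Instantiating the formalized Parson's Theorem $\textsf{PRA}\vdash\forall^{\Pi_2}B\,(\Box(\sigma\to B)\to\Box B)$ at such a $B$ yields the first schema; instantiating the $\Pi_3$-conservativity lemma $\textsf{PRA}\vdash\forall^{\Pi_3}B\,(\Box(\neg\sigma\to B)\to\Box B)$, together with $\Pi_2\subseteq\Pi_3$, yields the second. Hence $\textbf{FGL}_1$ is arithmetically sound over $\textsf{PRA}$, and Corollary \ref{corollary:sufficient} gives $\textbf{PL}_{\mathcal{Q}}(\textsf{PRA})=\mathcal{L}(\mathfrak{G}_1)$.

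The step requiring the most care is the complexity bookkeeping: one must make precise that the modal formulas $B$ occurring in the $\textbf{FGL}_1$-axioms land in the $\Pi_2$ (respectively $\Pi_3$) class over which the formalized Parson's Theorem and the conservativity lemma quantify --- in particular that a Boolean combination of $\Sigma_1$ sentences can be put into $\Pi_2$ normal form $\textsf{PRA}$-provably and in a way compatible with the internal truth predicates used to state those two results. Everything else is an assembly of results already in hand.
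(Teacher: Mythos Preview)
Your proposal is correct and follows essentially the same route as the paper: apply Corollary~\ref{corollary:sufficient} with $n=1$, $T=\textsf{PRA}$, and $s_1=\sigma$, and verify arithmetical soundness of \textbf{FGL}$_1$ by noting that the admissible $B$'s are $\mathcal{B}(\Sigma_1)$ (hence $\Pi_2$), so that the formalized Parson's Theorem and the $\Pi_3$-conservativity lemma dispatch the two extra axiom schemata. Your explicit remark about the complexity bookkeeping is a welcome elaboration of what the paper leaves implicit in the line ``So this certainly holds for $\mathcal{B}(\Sigma_1)$ formulas\ldots''.
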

While the logic \textbf{GL.3} of the linear frame $\mathfrak{G}_0$ is well known, that of $\mathfrak{G}_1$ is not. Therefore in the following section we provide a simple axiomatization. Our work can then be generalized to arbitrary $\mathfrak{G}_n$.
\end{subsection}
\end{section}

\begin{section}{The Logic of $\mathfrak{G}_1$} \label{qgl}
\begin{subsection}{The Modal Logic \textbf{GL.4} and its corresponding class of frames}
We define \textbf{GL.4} to be the normal modal logic obtained by adding to \textbf{GL} the following two axiom schemata: \begin{itemize}
\item[\textit{Q1}.] $\Box (\Box A \rightarrow (B \vee C)) \vee \Box (\Box ^+ B \rightarrow (A \vee C)) \vee \Box (\Box ^+ C \rightarrow (A \vee B))$;
\item[\textit{Q2}.] $\Diamond (\Diamond A \wedge \Box B) \rightarrow \Box(\Diamond A \vee B)$.
\end{itemize}

\textbf{GL.4} in fact defines a natural class of frames. We define $\mathcal{C}$ to be the class satisfying the following properties: \begin{itemize}\item[\textit{C1}.] Finite, irreflexive and transitive; \item[\textit{C2}.] Non-triple branching: $(xRy \; \& \; xRz \; \& \; xRw) \Rightarrow$ \[(wRy \vee yRw \vee zRw \vee wRz  \vee yRz \vee zRy \vee w=y  \vee z=y  \vee w=z);\]
\item[\textit{C3}.] Strongly confluent: $(xRy \; \& \; xRz \; \& \; yRw) \Rightarrow (zRw \vee wRz \vee yRz)$.
\end{itemize}

\begin{theorem} \label{qglcom} \textnormal{\textbf{GL.4}} is sound and complete with respect to $\mathcal{C}$.
\end{theorem}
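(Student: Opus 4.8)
The plan is to establish soundness by frame correspondence and completeness via the finite model property. \emph{Soundness.} Since every frame in $\mathcal{C}$ is finite, irreflexive and transitive, the \textbf{GL}-axioms are automatically valid, so it remains to check Q1 and Q2, and I would do this by showing, contrapositively, that Q1 corresponds to C2 and Q2 to C3 over the class of \textbf{GL}-frames. If a model on a frame refutes Q1 at a point $x$, pick successors $y,z,w$ of $x$ witnessing the three negated disjuncts; reading off the surviving conjuncts ($\Box A,\neg B,\neg C$ at $y$; $\Box^+B,\neg A,\neg C$ at $z$; $\Box^+C,\neg A,\neg B$ at $w$) one checks that $y,z,w$ are pairwise distinct and pairwise $R$-incomparable, contradicting C2. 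A refutation of Q2 at $x$ yields $y$ with $xRy$ and $y\vDash\Diamond A\wedge\Box B$, a point $w$ with $yRw$ and $w\vDash A$, and a point $z$ with $xRz$ and $z\vDash\Box\neg A\wedge\neg B$; applying C3 to $xRy$, $xRz$, $yRw$ gives $zRw\vee wRz\vee yRz$, each disjunct contradicting one of these facts (for the middle one, $yRw$ and $wRz$ force $yRz$ by transitivity). All of this is routine.

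\emph{Completeness.} Given that \textbf{GL.4} does not prove $A$, I would run the standard selective-filtration construction for \textbf{GL}-style logics. Fix a finite adequate set $\Phi$ that contains all subformulas of $A$, is closed under single negations and Boolean combinations, and --- the crucial point --- is closed under the boxed subformulas occurring in those instances of Q1 and Q2 obtained by substituting $\Phi$-formulas for the schematic letters, so that the extra axioms can ``act'' inside $\Phi$. One must check that $\Phi$ stays finite; this should follow from the fact that \textbf{GL} already collapses the relevant formulas up to provable equivalence, in the spirit of the normal-form manipulations of Lemmata~\ref{lemma:normalforms} and~\ref{lemmabox} for \textbf{FGL}$_n$. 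Let the worlds be the $\Phi$-maximal \textbf{GL.4}-consistent sets, and put $\Gamma\prec\Delta$ iff $\Box B\in\Gamma$ implies $\{B,\Box B\}\subseteq\Delta$ for all $\Box B\in\Phi$, and some $\Box B\in\Phi$ lies in $\Delta\setminus\Gamma$. The usual Truth Lemma goes through, and $\prec$ is transitive, irreflexive, and --- being finite --- conversely well-founded, so a refuting model satisfying C1 is obtained.

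The hard part will be verifying that this finite model actually lands in $\mathcal{C}$, i.e.\ that it satisfies C2 and C3. For C2, suppose some world $\Gamma$ has three pairwise $\prec$-incomparable, pairwise distinct successors $\Delta_1,\Delta_2,\Delta_3$. They are separated by $\Phi$-formulas, and every boxed formula true at $\Gamma$ is inherited by all of them while each $\Delta_i$ picks up a new boxed $\Phi$-formula; from this data one should be able to assemble $\Phi$-formulas in the roles of $A,B,C$ for which the corresponding instance of Q1 --- a member of $\Gamma$ by maximal consistency --- forces one of $\Delta_i\prec\Delta_j$, a contradiction. The treatment of C3 from Q2 is analogous, starting from a putative configuration $\Gamma\prec\Delta_1$, $\Gamma\prec\Delta_2$, $\Delta_1\prec\Delta_3$ in which all three of the C3-conclusions fail. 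Getting this bookkeeping right --- which in particular dictates exactly how much closure $\Phi$ needs and matches each refuted configuration to the precise axiom instance --- is the delicate point; everything else is standard. If the direct filtration proves unwieldy, a fallback is to start instead from a finite transitive irreflexive countermodel supplied by \textbf{GL}'s finite model property and then perform point-identifications, guided by Q1 and Q2, to enforce C2 and C3 while preserving the refutation of $A$.
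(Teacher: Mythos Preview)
Your soundness argument is fine and in fact spells out what the paper leaves implicit (the paper just says ``as usual'' and then later invokes the same correspondence by saying $Q1$ is canonical for $C2$ and $Q2$ for $C3$).

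The completeness argument, however, has a real gap, and the paper's route avoids it entirely. Your plan hinges on producing a \emph{finite} adequate set $\Phi$ closed under the boxed subformulas of all $Q1$/$Q2$-instances built from $\Phi$-formulas. But an instance of $Q1$ with $A,B,C\in\Phi$ introduces boxes such as $\Box(\Box A\to(B\vee C))$ of strictly larger modal depth, and iterating this closure does not obviously terminate. Your justification --- that ``\textbf{GL} already collapses the relevant formulas up to provable equivalence, in the spirit of Lemmata~\ref{lemma:normalforms} and~\ref{lemmabox}'' --- does not apply: those lemmata concern the variable-free fragment $\mathcal{F}_n$ with constants, where every formula reduces to a Boolean combination of $s_i$'s and $\Box^\alpha\bot$'s. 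In the presence of propositional variables there is no such normal form, so you cannot invoke them to bound $\Phi$. Without a finite $\Phi$ your $\prec$-model is not guaranteed finite, and without closure under the axiom instances your proposed verification of $C2$/$C3$ inside the finite model does not go through. The fallback (quotient a finite \textbf{GL}-countermodel to enforce $C2$/$C3$) is also only a sketch: such identifications are $p$-morphic images, which preserve validity, not refutation, so you would need a different mechanism.

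The paper's argument sidesteps all of this. It works first with the \emph{full} canonical model of \textbf{GL.4} (maximal consistent sets, not $\Phi$-types). There $Q1$ and $Q2$ are canonical in the standard sense, so $C2$ and $C3$ hold on the canonical frame outright --- no finite closure is needed. Finiteness and irreflexivity are then obtained by a selective-filtration step: starting from a point $\Gamma$ with $\Box A\wedge\neg A\in\Gamma$ (found via L\"ob), one keeps only those $R^{\mathrm{GL.4}}$-successors $\Delta$ that witness $\Box C\wedge\neg C$ for some $\Box C$ a subformula of $A$ with $\neg\Box C\in\Gamma$. This set is finite (bounded by the number of boxed subformulas, with at most two witnesses each by $C2$), every point is irreflexive by construction, and --- the key point --- $C2$ and $C3$, being universal first-order conditions, are inherited automatically by any induced subframe. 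The Truth Lemma is then the usual one. So the essential idea you are missing is: establish $C2$/$C3$ once on the big canonical model via canonicity, and let them descend for free to the selected finite submodel, rather than trying to rebuild them from a finite type set.
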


Soundness is proven as usual by induction on complexity of proofs. As for completeness, we shall appeal to the canonical model of \textbf{GL.4} (see Definition 4.18 of \cite{Bla01}). In particular we use the finite filtration method to transform the canonical model into a model in the class $\mathcal{C}$.

Recall the canonical model $\mathfrak{M}$ of \textbf{GL.4} is the triple, $\langle W^{GL.4}, R^{GL.4}, V^{GL.4} \rangle$ with \begin{itemize} \item $W^{GL.4}$ is the set of maximal \textbf{GL.4}-consistent sets;
\item For $\Gamma, \Delta \in W^{GL.4}$, define $\Gamma R^{GL.4} \Delta$ if for all $\phi \in \Delta$ we have $\Diamond \phi \in \Gamma$; \item $V(p) = \{\Gamma : p \in \Gamma\}$, for propositional variables $p$. \end{itemize}
First, we make some key observations about this model, the verifications of which are straightforward.
\begin{lemma} \label{c2} C2 holds on $\mathfrak{M}$. \end{lemma}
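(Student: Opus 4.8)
The statement to prove is Lemma \ref{c2}: \emph{C2 holds on $\mathfrak{M}$}, the canonical model of \textbf{GL.4}. Here C2 is the non-triple-branching condition: whenever $xRy$, $xRz$, and $xRw$ in $\mathfrak{M}$, at least one of the disjuncts $wRy \vee yRw \vee zRw \vee wRz \vee yRz \vee zRy \vee w=y \vee z=y \vee w=z$ holds.

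The plan is to argue by contradiction, exploiting the axiom Q1. Suppose $\Gamma R^{GL.4} \Delta_1$, $\Gamma R^{GL.4} \Delta_2$, $\Gamma R^{GL.4} \Delta_3$ but all nine disjuncts fail: the three worlds $\Delta_1, \Delta_2, \Delta_3$ are pairwise $R$-incomparable and pairwise distinct. From pairwise distinctness-and-incomparability I would extract, for each pair $(i,j)$, a formula witnessing that $\Delta_i$ does not see $\Delta_j$, i.e.\ $\neg \Diamond \psi_{ij} \in \Gamma$-style information, together with formulas separating the sets. Concretely: since $\Delta_i \neq \Delta_j$ and $\neg(\Delta_i R \Delta_j)$ and $\neg(\Delta_j R \Delta_i)$, standard canonical-model reasoning gives formulas $A$, $B$, $C$ such that $A \in \Delta_1$, $B \in \Delta_2$, $C \in \Delta_3$, while $\neg A, \Box^+\neg A$ hold in the appropriate places, and similarly for $B$ and $C$ — the point being to set things up so that $\Box A$ "holds toward" $\Delta_1$ while $\Delta_2, \Delta_3 \models \neg A$, etc. Then I would show each of the three boxed disjuncts of Q1 fails at $\Gamma$: e.g.\ $\Box(\Box A \to (B \vee C))$ fails because $\Delta_1$ is an $R$-successor of $\Gamma$ where (arranging $A$ suitably to make $\Box A$ true there, using that $\Delta_1$'s successors all satisfy $A$ via the incomparability data) $\Box A$ holds but $B \vee C$ fails. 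Since $\Gamma$ is \textbf{GL.4}-consistent and contains every instance of Q1, this is a contradiction.

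The main technical obstacle is the extraction of the witnessing formulas $A, B, C$ with the right nested-box properties: I need not just that $\Delta_i$ and $\Delta_j$ disagree on some formula, but that I can choose $A$ so that $\Box A$ is \emph{true at $\Delta_1$} (hence I need $A$ to hold at every $R$-successor of $\Delta_1$ as well) while $A$ fails at $\Delta_2$ and $\Delta_3$; the analogous requirement for $\Box^+ B$ at $\Delta_2$ and $\Box^+ C$ at $\Delta_3$ is what matches the asymmetric shape of Q1. This is where one typically uses that $R^{GL.4}$ is transitive on the canonical model of a \textbf{GL}-extension, so that "$A$ at $\Delta_1$ and all its successors" can be packaged, together with a finiteness/compactness step to turn the separation of the three incomparable worlds into single formulas. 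A clean way to organize this is to first filtrate through a finite set of formulas containing the relevant separators (as the proof of Theorem \ref{qglcom} does anyway) and to verify C2 there; but since the lemma is stated for $\mathfrak{M}$ itself, I would instead do the argument directly in $\mathfrak{M}$ using that $\Diamond\varphi \in \Gamma$ iff some successor contains $\varphi$, and that distinct maximal consistent sets are separated by a formula.

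Once the witnesses are in hand, the finish is routine: plug them into the three disjuncts of Q1, check each disjunct is refuted at a successor of $\Gamma$ (one disjunct per $\Delta_i$, using $\Box$-truth at $\Delta_i$ and falsity of the remaining two literals there), conclude the whole disjunction Q1 is false at $\Gamma$, contradicting $\Gamma \supseteq \textbf{GL.4}$ and $\Gamma$ consistent. Hence the assumption fails and C2 holds on $\mathfrak{M}$.
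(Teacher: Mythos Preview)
Your plan is correct and is exactly the direct verification of the canonicity of $Q1$ for $C2$ that the paper invokes by citation (``axiom $Q1$ is \emph{canonical} for property $C2$, see \cite{Bla01}''). Concretely: from $\neg(\Delta_1 R^{GL.4}\Delta_j)$ one extracts $\Box\phi_{1j}\in\Delta_1$ with $\phi_{1j}\notin\Delta_j$, and setting $A=\phi_{12}\wedge\phi_{13}$ gives $\Box A\in\Delta_1$ and $\neg A\in\Delta_2,\Delta_3$; for $B$ (and symmetrically $C$) one combines the incomparability witness $\psi_{2j}$ (so $\Box\psi_{2j}\in\Delta_2$, $\neg\psi_{2j}\in\Delta_j$) with a distinctness witness $\chi_{2j}\in\Delta_2\setminus\Delta_j$ via $\beta_j:=\chi_{2j}\vee\psi_{2j}$ to get $\Box^+\beta_j\in\Delta_2$ and $\neg\beta_j\in\Delta_j$, then take $B=\beta_1\wedge\beta_3$. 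This refutes all three disjuncts of $Q1$ at $\Gamma$, contradicting maximality.

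One small simplification: your mention of a ``finiteness/compactness step'' is unnecessary. Each failed relation $\neg(\Delta_i R^{GL.4}\Delta_j)$ and each inequality $\Delta_i\neq\Delta_j$ already yields a \emph{single} witnessing formula, and there are only finitely many such relations to witness, so finite conjunctions suffice with no appeal to compactness. Likewise, there is no need to pass to a filtration first; the argument goes through verbatim in $\mathfrak{M}$.
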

\begin{lemma} \label{c3} C3 holds on $\mathfrak{M}$. \end{lemma}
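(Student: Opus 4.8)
The plan is to verify that the property \textit{C3} (strong confluence) holds on the canonical model $\mathfrak{M}$ of \textbf{GL.4}. So suppose $\Gamma, \Delta, \Theta, \Xi \in W^{GL.4}$ satisfy $\Gamma R^{GL.4} \Delta$, $\Gamma R^{GL.4} \Theta$ and $\Delta R^{GL.4} \Xi$; we must show $\Theta R^{GL.4} \Xi$, or $\Xi R^{GL.4} \Theta$, or $\Delta R^{GL.4} \Theta$. The only tool available is axiom \textit{Q2}, $\Diamond(\Diamond A \wedge \Box B) \to \Box(\Diamond A \vee B)$, together with the definition of $R^{GL.4}$ and the standard existence lemma for canonical models of \textbf{GL}-like logics. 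I would argue by contradiction: assume all three of $\Theta R^{GL.4} \Xi$, $\Xi R^{GL.4} \Theta$, $\Delta R^{GL.4} \Theta$ fail.

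The key step is to translate each failure into a witnessing formula. Negating $R^{GL.4}$ between maximal consistent sets gives: there is $A$ with $A \in \Xi$ but $\Diamond A \notin \Theta$ (i.e.\ $\Box \neg A \in \Theta$), there is $C$ with $C \in \Theta$ but $\Diamond C \notin \Xi$, and there is $B$ with $B \in \Theta$ but $\Diamond B \notin \Delta$ (i.e.\ $\Box \neg B \in \Delta$). Since $\Gamma R^{GL.4} \Theta$ and $C \wedge B \in \Theta$, we get $\Diamond(C \wedge B) \in \Gamma$; since $\Gamma R^{GL.4} \Delta$ and $\Box \neg B \in \Delta$, a routine application of $\Diamond(\Diamond A \wedge \Box B')$-type reasoning is needed. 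Concretely, the aim is to assemble from $\Gamma$'s perspective a formula of the form $\Diamond(\Diamond A' \wedge \Box B')$ which, by \textit{Q2}, forces $\Box(\Diamond A' \vee B')$, and then to derive a contradiction at $\Delta$ or $\Theta$. Here $\Delta R^{GL.4} \Xi$ and $A \in \Xi$ give $\Diamond A \in \Delta$; combined with $\Box \neg B \in \Delta$ this yields $\Diamond A \wedge \Box \neg B \in \Delta$, hence $\Diamond(\Diamond A \wedge \Box \neg B) \in \Gamma$; applying \textit{Q2} with $B := \neg B$ gives $\Box(\Diamond A \vee \neg B) \in \Gamma$, so $\Diamond A \vee \neg B \in \Theta$ (using $\Gamma R^{GL.4} \Theta$ and the fact that $\Box \varphi \in \Gamma$ implies $\varphi \in \Theta$). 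Since $B \in \Theta$ we conclude $\Diamond A \in \Theta$, contradicting $\Box \neg A \in \Theta$.

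I would present this as the proof of Lemma \ref{c3}, being careful about the direction of the modalities and which of the three disjuncts in \textit{C3} is actually being refuted; the bookkeeping of which formula plays the role of $A$ and which the role of $B$ in the instance of \textit{Q2} is the only delicate point, and the argument above shows a single application of \textit{Q2} suffices once the three failures are encoded correctly. I expect the main obstacle to be precisely this matching: \textit{Q2} is not symmetric in $A$ and $B$ (one appears under $\Diamond$ twice, the other under $\Box$), so one must choose the contradiction target ($\Theta$, via $\Box\neg A \in \Theta$) in advance and feed the right conjunction into $\Gamma$. Everything else — the existence/truth lemmas for canonical models, the transitivity and conversely well-foundedness built into \textbf{GL} — is standard and can be cited from \cite{Bla01}, so no extended calculation is required.
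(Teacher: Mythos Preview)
Your proof is correct and is precisely the canonicity verification the paper alludes to (the paper merely asserts that \textit{Q2} is canonical for \textit{C3}, citing \cite{Bla01}, without spelling out the argument). Note that the witness $C$ you extract from the failure of $\Xi R^{GL.4}\Theta$ is never used: your argument in fact establishes the stronger two-disjunct conclusion $\Theta R^{GL.4}\Xi \ \vee\ \Delta R^{GL.4}\Theta$, which of course implies \textit{C3}.
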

In fact, these follow by the fact that axiom \textit{Q1} is \textit{canonical} for property \textit{C2}, as is axiom \textit{Q2} for \textit{C3} (see \cite{Bla01}, Definition 4.31). Thus, it remains to show that we can transform the underlying frame of $\mathfrak{M}$ into a finite partial order, while preserving validity of formulas.

\begin{proof} [Proof of Theorem \ref{qglcom}]
Suppose that \textbf{GL.4} $\nvdash A$, for some formula $A$. We would like to find a maximal consistent set $\Gamma$ such that $(\Box A \wedge \neg A) \in \Gamma$, so that $\Gamma$ is an `irreflexive' point in the canonical model.

By the fact that $A$ is not a theorem, we are guaranteed of some $\Delta \in W^{GL.4}$ such that $A \notin \Delta$. If $\Box A \in \Delta$, then set $\Gamma := \Delta$. Otherwise, since $\neg \Box A \in \Delta$, by the contrapositive form of L\"{o}b's Theorem $\Diamond (\Box A \wedge \neg A) \in \Delta$. Thus by the so-called `Existence Lemma' (\cite{Bla01}, Lemma 4.20) for normal modal logics, $\Delta$ is $R^{GL.4}$-related to some $\Sigma$ for which $(\Box A \wedge \neg A) \in \Sigma$. In that case, set $\Gamma := \Sigma$.

Either way we have some $\Gamma$ with $(\Box A \wedge \neg A) \in \Gamma$. Notice also, if $\Box C$ is a subformula of $A$, and $\Box C \notin \Gamma$, then by the same argument there is some `irreflexive' $\Delta$ such that $\Gamma R^{GL.4} \Delta$ and that $(\Box C \wedge \neg C) \in \Delta$. Moreover, by Lemma \ref{c2} there are at most two distinct such $\Delta$.

With these observations in place, our filtrated model $\mathfrak{M'} = \langle W,R,V \rangle$ will be defined as a submodel of $\mathcal{M}$: \begin{itemize} \item[(i)] $W := \{\Gamma\} \cup \{\Delta: \Gamma R^{GL.4} \Delta$, and there is $\Box C$ subsentence of $A$, such that \\ $(\Box C \wedge \neg C) \in \Delta$ and $\neg \Box C \in \Gamma\}$;
\item[(ii)] $R$ is just $R^{GL.4}$ restricted to points in $W$;
\item[(iii)] $V(p) := V^{GL.4}(p) \cap W$.
\end{itemize}
The model $\mathfrak{M'}$ satisfies \textit{C2}, \textit{C3}, and transitivity simply because $\mathfrak{M}$ does. It is clearly finite. And irreflexivity, as hinted above, follows from the fact that each point in $W$ was chosen to contain some formulas $\Box C$ and $\neg C$, ensuring the point is not related to itself. It follows $\mathcal{M'}$  is in $\mathcal{C}$.

The standard `Truth Lemma' is then proven by induction:
\begin{lemma} If $\Delta \in W$ and $B$ is a subsentence of $A$, then $B \in \Delta$ iff $\mathfrak{M'}, \Delta \vDash B$. \end{lemma}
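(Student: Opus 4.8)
The plan is to prove the lemma by induction on the structure of the subsentence $B$ of $A$, following the standard pattern for a truth lemma over a filtrated canonical model. The base case $B = p$ (a propositional variable) is immediate from clause (iii) in the definition of $\mathfrak{M'}$: since $\Delta \in W$, we have $p \in \Delta \Leftrightarrow \Delta \in V^{GL.4}(p) \Leftrightarrow \Delta \in V^{GL.4}(p) \cap W = V(p) \Leftrightarrow \mathfrak{M'},\Delta \vDash p$; the case $B = \bot$ is trivial, as $\bot$ lies in no consistent set. For the Boolean connectives the induction step is routine maximal-consistency bookkeeping: $\Delta$ is deductively closed and maximal, so membership commutes with $\to$ (and hence with $\wedge$, $\vee$, $\neg$), and the induction hypothesis transfers this to forcing in $\mathfrak{M'}$. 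I would only state these cases.

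The real content is the modal step $B = \Box C$, where $C$ is again a subsentence of $A$, so the induction hypothesis is available for $C$. For the direction $\Box C \in \Delta \Rightarrow \mathfrak{M'},\Delta \vDash \Box C$: given any $\Theta \in W$ with $\Delta R \Theta$, we have $\Delta R^{GL.4}\Theta$, and since the canonical relation satisfies $\{\varphi : \Box\varphi \in \Delta\} \subseteq \Theta$, we get $C \in \Theta$, so by the induction hypothesis $\mathfrak{M'},\Theta \vDash C$; as $\Theta$ was arbitrary, $\mathfrak{M'},\Delta \vDash \Box C$. For the contrapositive, suppose $\Box C \notin \Delta$. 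We must produce a $\Theta \in W$ with $\Delta R \Theta$ and $C \notin \Theta$; the induction hypothesis then gives $\mathfrak{M'},\Theta \nvDash C$, hence $\mathfrak{M'},\Delta \nvDash \Box C$. Since $\neg\Box C \in \Delta$, L\"ob's theorem gives $\Diamond(\Box C \wedge \neg C) \in \Delta$, so by the Existence Lemma (\cite{Bla01}, Lemma 4.20) there is a maximal consistent $\Theta$ with $\Delta R^{GL.4}\Theta$ and $(\Box C \wedge \neg C) \in \Theta$; in particular $C \notin \Theta$.

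The one point that needs care — and the step I expect to be the main obstacle — is verifying that this witness $\Theta$ actually lies in the restricted domain $W$, not merely in $W^{GL.4}$. I would argue as follows. Either $\Delta = \Gamma$ or, by definition of $W$, $\Gamma R^{GL.4}\Delta$; in either case transitivity of the canonical frame of \textbf{GL} gives $\Gamma R^{GL.4}\Theta$. Next, $\neg\Box C \in \Gamma$: if $\Delta = \Gamma$ this is the hypothesis, and if $\Delta \neq \Gamma$ then $\Box C \notin \Delta$ together with $\Gamma R^{GL.4}\Delta$ and the transitivity principle $\Box\varphi \to \Box\Box\varphi$ of \textbf{GL} (which forces $\Box C \in \Gamma \Rightarrow \Box C \in \Delta$ whenever $\Gamma R^{GL.4}\Delta$) yields $\Box C \notin \Gamma$. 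Now $\Box C$ is a subsentence of $A$, $(\Box C \wedge \neg C) \in \Theta$, $\Gamma R^{GL.4}\Theta$, and $\neg\Box C \in \Gamma$: these are exactly the clauses of (i) defining membership in $W$, so $\Theta \in W$, and hence $\Delta R \Theta$. This closes the modal case and the induction. Finally, applying the lemma to $B := A$ at the point $\Gamma$, where $(\Box A \wedge \neg A) \in \Gamma$, gives $\mathfrak{M'},\Gamma \nvDash A$ with $\mathfrak{M'} \in \mathcal{C}$, which completes the proof of Theorem \ref{qglcom}.
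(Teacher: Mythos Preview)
Your proof is correct and follows exactly the approach the paper indicates: the paper merely states that the Truth Lemma ``is then proven by induction'' without spelling out any details, and you have supplied precisely the standard inductive argument, including the only non-routine point---namely, that the L\"ob-style witness $\Theta$ with $(\Box C \wedge \neg C)\in\Theta$ actually lies in the restricted domain $W$, which you handle correctly via transitivity of the canonical relation and the $\Box C\to\Box\Box C$ axiom.
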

Concluding the proof, since $A \notin \Gamma$, we have that $\mathfrak{M'},\Gamma \nvDash A$.
\end{proof}
\end{subsection}

\begin{subsection}{The Class $\mathcal{C}$ and the Frame $\mathfrak{G}_1$}

We must now show that \textbf{GL.4} is the logic of the frame $\mathfrak{G}_1$. 

Recall a $p$-morphism from $\mathbb{F} = \langle W,R \rangle$ to $\mathbb{F'} = \langle W',R' \rangle$ is a function $f: W \rightarrow W'$, such that $xRy$ implies $f(x)R'f(y)$; and if $f(x)R'y'$ then there is some $y \in W$ such that $f(y) = y'$ and $xRy$. The following theorem is standard:\footnote{See, e.g. \cite{Bla01}, Definition 3.13, where $p$-morphisms go under the name \textit{bounded morphism}.}
\begin{theorem} If there is a $p$-morphism from $\mathbb{F}$ to $\mathbb{F'}$, then the existence of a valuation $V'$ and point $w' \in W'$ such that $\langle \mathbb{F'},V' \rangle , w' \nvDash A$, ensures the existence of a valuation $V$ and point $w \in W$, such that $\langle \mathbb{F}, V \rangle , w \nvDash A$.
\end{theorem}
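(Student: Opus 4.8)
The plan is the standard pull-back argument for bounded morphisms. Suppose $f\colon \mathbb{F}\to\mathbb{F'}$ is a $p$-morphism and $V'$ is a valuation on $\mathbb{F'}$ with $\langle\mathbb{F'},V'\rangle, w'\nvDash A$. I would define a valuation $V$ on $\mathbb{F}$ by pulling $V'$ back along $f$: for each propositional variable $p$, set
\[
V(p)\ :=\ \{\,x\in W : f(x)\in V'(p)\,\}.
\]
One then fixes some $w\in W$ with $f(w)=w'$; this is available since in every situation where this theorem is applied below $f$ will be surjective (more generally it suffices that $w'$ lie in the range of $f$, which is automatically a generated subframe of $\mathbb{F'}$).

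The core step is the \emph{$p$-morphism lemma}: for every modal formula $B$ and every $x\in W$,
\[
\langle\mathbb{F},V\rangle, x\vDash B \quad\Longleftrightarrow\quad \langle\mathbb{F'},V'\rangle, f(x)\vDash B,
\]
which I would prove by induction on the construction of $B$. The atomic case is immediate from the definition of $V$, and the Boolean connectives are routine. The modal step $B=\Box C$ is the only place the two clauses of the definition of a $p$-morphism enter: the ``forth'' clause ($xRy\Rightarrow f(x)R'f(y)$) shows that a refutation of $C$ at some $R$-successor of $x$ maps to a refutation of $C$ at some $R'$-successor of $f(x)$, while the ``back'' clause (if $f(x)R'y'$ then there is $y$ with $xRy$ and $f(y)=y'$) gives the converse; both directions invoke the induction hypothesis for $C$.

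With the lemma established the theorem follows at once: since $f(w)=w'$ and $\langle\mathbb{F'},V'\rangle, w'\nvDash A$, the lemma gives $\langle\mathbb{F},V\rangle, w\nvDash A$. I do not anticipate any genuine obstacle here: this is a textbook preservation fact for bounded morphisms (cf.\ \cite{Bla01}), and the only points needing a word of care are the surjectivity remark above and the bookkeeping in the $\Box$-case of the induction, both entirely routine.
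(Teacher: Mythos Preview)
Your proposal is correct and is precisely the standard textbook argument. In fact the paper does not supply its own proof of this theorem at all: it is stated as a known result with a reference to \cite{Bla01}, so there is nothing to compare against beyond noting that your pull-back construction and induction on formula complexity is exactly the proof one finds there. Your remark about needing $w'$ in the range of $f$ is apt; in the paper's application (Proposition~\ref{pmorph}) the $p$-morphisms are surjective by construction, so the point is moot there.
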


To demonstrate that \textbf{GL.4} is the logic of $\mathfrak{G}_1$, we use the following proposition:
\begin{pro} \label{pmorph} For any frame $\mathbb{F} \in \mathcal{C}$ and any point $x$ in $\mathbb{F}$, there is some point $\langle m,i \rangle$ in $\mathfrak{G}_1$, such that there exists a $p$-morphism from the subframe generated by $\langle m,i \rangle$ to the subframe generated by $x$.
\end{pro}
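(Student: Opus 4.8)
The plan is to fix a frame $\mathbb{F} = \langle W, R \rangle$ in $\mathcal{C}$ together with a point $x$, and to build the required $p$-morphism by first analyzing the ``shape'' of the subframe generated by $x$. The key structural observation is that, thanks to strong confluence (\textit{C3}) and non-triple-branching (\textit{C2}), the subframe generated by $x$ looks very much like an initial segment of $\mathfrak{G}_1$: at each ``height'' there are at most two $R$-incomparable points, and the points of each height all see exactly the points below them. More precisely, I would define the \emph{depth} $d(y)$ of a point $y$ in the generated subframe as the length of the longest $R$-chain descending from $y$, and then show by a downward induction on depth, using \textit{C2} and \textit{C3}, that the set of points at depth $k$ has size $1$ or $2$, and that every point at depth $k+1$ is $R$-related to every point at depth $\le k$. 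This gives the subframe generated by $x$ essentially the same ``two-column ladder'' structure that $\mathfrak{G}_1$ has, truncated at height $d(x)$ and possibly with some rungs having only one node.

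Having established this, I would choose $m := d(x)$ and pick $i \in \{0,1\}$ so that $\langle m, i\rangle$ lies in the correct column (either column works if the top rung has two nodes; if it has one, we take the column that matches). The subframe of $\mathfrak{G}_1$ generated by $\langle m, i\rangle$ consists of all $\langle p, j\rangle$ with $p < m$, together with $\langle m, i\rangle$ itself — a full, untruncated two-column ladder of height $m$. I would then define $f$ from this ladder onto the generated subframe of $x$ by: send $\langle m, i\rangle$ to $x$; and for $p < m$, send $\langle p, 0\rangle$ and $\langle p, 1\rangle$ to the (one or two) points of depth $p$, sending both $\langle p,0\rangle$ and $\langle p,1\rangle$ to the unique point when that depth-level is a singleton. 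Checking the two $p$-morphism conditions is then routine: the forth condition ($aR_1 b \Rightarrow f(a) R f(b)$) is immediate since $R_1$ just compares first coordinates and $R$ relates every point to all strictly-lower-depth points; the back condition follows because $f$ is onto each depth-level and every point of depth $< d(f(a))$ is $R$-reachable from $f(a)$, so we can lift any witness in $\mathbb{F}$ to an appropriate $\langle p, j\rangle$.

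The main obstacle I expect is the structural lemma — showing that \textit{C2} and \textit{C3} force each depth-level to have at most two elements and to be fully connected to all lower levels. The subtlety is that \textit{C3} (strong confluence) is stated as $(xRy \,\&\, xRz \,\&\, yRw) \Rightarrow (zRw \vee wRz \vee yRz)$, which does not obviously give ``every point at depth $k+1$ sees every point at depth $k$''; one has to iterate it carefully, combining it with \textit{C2} to rule out a third incomparable branch appearing at any level. In particular, one must be careful about points at the \emph{same} depth that might nonetheless be $R$-related — I would argue this cannot happen within a generated subframe of the required form, since $R$-related points have strictly decreasing depth (by transitivity and irreflexivity of $R$, i.e.\ \textit{C1}). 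Once the ladder structure is nailed down, the construction and verification of $f$ are bookkeeping.
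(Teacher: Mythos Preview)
Your approach is correct and genuinely different from the paper's. The paper proceeds by induction on the number of points in the frame, with a case split on how many immediate successors the root $x$ has (none, one, or two); in the one-successor case it removes $x$ and applies the inductive hypothesis to the remaining subframe, and in the two-successor case it removes the maximal points and shifts the induced $p$-morphism up by one level. You instead establish the global ``two-column ladder'' structure in one shot via the depth function and then write down the $p$-morphism explicitly. Your structural lemma is indeed provable: the at-most-two-per-level claim follows from \textit{C2} (three same-depth successors of $x$ would force two of them to be $R$-comparable, contradicting equality of depth), and the full-connectivity claim follows from a single application of \textit{C3} together with transitivity (given $y$ at depth $k{+}1$, $z$ at depth ${\le}\,k$, pick $w$ with $yRw$ and $d(w)=k$; then \textit{C3} applied to $x,y,z,w$ forces $yRz$, since $zRw$ is ruled out by depth and $wRz$ gives $yRz$ by transitivity).

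Your direct construction is arguably cleaner than the paper's case analysis and yields an explicit description of the $p$-morphism. The inductive argument, on the other hand, is more modular and is what the paper leans on when remarking that the method generalizes to $\mathfrak{G}_n$; your approach should also generalize (replacing ``at most $2$'' by ``at most $2^n$'' per level), though the verification of \textit{C3}-style connectivity becomes a bit more involved. One small wrinkle in your write-up: the top level of the generated subframe in $\mathfrak{G}_1$ always has exactly one node, so the remark about choosing $i$ to ``match the column'' is unnecessary---either $i$ works.
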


In other words, falsifiability is reflected by $p$-morphisms, which gives us the following corollary of Proposition \ref{pmorph} and improvement upon Corollary \ref{corollary:logicofq}.
\begin{corol} \textnormal{\textbf{PL}}$_\mathcal{Q}$\textnormal{(\textsf{PRA}) =} \textnormal{\textbf{GL.4}}.
\end{corol}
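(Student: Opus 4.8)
The plan is to combine the three results just assembled. Corollary~\ref{corollary:logicofq} already gives $\mathbf{PL}_{\mathcal{Q}}(\textsf{PRA}) = \mathcal{L}(\mathfrak{G}_1)$, so it suffices to establish $\mathcal{L}(\mathfrak{G}_1) = \textbf{GL.4}$, which I would do by proving the two inclusions separately and then citing Corollary~\ref{corollary:logicofq} at the end.

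For $\textbf{GL.4} \subseteq \mathcal{L}(\mathfrak{G}_1)$ (soundness), I would observe that although $\mathfrak{G}_1$ is infinite and hence not literally a member of $\mathcal{C}$, every point $\langle m,i\rangle$ of $\mathfrak{G}_1$ generates only a finite subframe: the successors of $\langle m,i\rangle$ are exactly the $\langle p,j\rangle$ with $p<m$, so the generated subframe has at most $2m+1$ points. That generated subframe is finite, irreflexive, and transitive, and it inherits non-triple-branching $(C2)$ and strong confluence $(C3)$ from $\mathfrak{G}_1$; each of these is a one-line check straight from the definition of $R_n$. Hence every point-generated subframe of $\mathfrak{G}_1$ lies in $\mathcal{C}$, and since validity on $\mathfrak{G}_1$ coincides with validity on all of its point-generated subframes, the soundness half of Theorem~\ref{qglcom} yields $\textbf{GL.4}\vdash A \Rightarrow \mathfrak{G}_1 \vDash A$.

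For $\mathcal{L}(\mathfrak{G}_1) \subseteq \textbf{GL.4}$ (completeness), I would argue contrapositively. If $\textbf{GL.4}\nvdash A$, then by the completeness half of Theorem~\ref{qglcom} there are a frame $\mathbb{F}\in\mathcal{C}$, a valuation, and a point $x$ at which $A$ is false; passing to the subframe generated by $x$ (still in $\mathcal{C}$, since $C1$--$C3$ are inherited, and $A$ is still false at $x$) I may assume $\mathbb{F}$ is generated by $x$. Proposition~\ref{pmorph} then supplies a point $\langle m,i\rangle$ of $\mathfrak{G}_1$ together with a (surjective) $p$-morphism from the subframe of $\mathfrak{G}_1$ generated by $\langle m,i\rangle$ onto $\mathbb{F}$, so that $x$ lies in its image. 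By the $p$-morphism transfer theorem stated above, the falsifiability of $A$ at $x$ in $\mathbb{F}$ transfers to falsifiability of $A$ on the generated subframe of $\mathfrak{G}_1$, hence on $\mathfrak{G}_1$ itself, so $A\notin\mathcal{L}(\mathfrak{G}_1)$. Together with the soundness inclusion and Corollary~\ref{corollary:logicofq} this gives $\mathbf{PL}_{\mathcal{Q}}(\textsf{PRA})=\textbf{GL.4}$.

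The real mathematical content here resides in the results already in hand, above all Proposition~\ref{pmorph}, whose proof must actually exhibit the $p$-morphism from an initial segment of $\mathfrak{G}_1$ onto an arbitrary rooted frame of class $\mathcal{C}$; within the proof of the corollary itself the only points to watch are the mismatch between the infinite frame $\mathfrak{G}_1$ and the finite frames comprising $\mathcal{C}$ — resolved cleanly by passing to point-generated subframes — and making sure the $p$-morphism produced by Proposition~\ref{pmorph} is onto, so that the transfer theorem applies to the specific falsifying point $x$ rather than merely to some point of $\mathbb{F}$.
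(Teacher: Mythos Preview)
Your argument is correct and follows essentially the same route as the paper: reduce to showing $\mathcal{L}(\mathfrak{G}_1)=\textbf{GL.4}$ via Corollary~\ref{corollary:logicofq}, get completeness by combining Theorem~\ref{qglcom} with Proposition~\ref{pmorph} and the $p$-morphism transfer theorem, and handle soundness by checking that the axioms hold on $\mathfrak{G}_1$. You are in fact more careful than the paper on two points it leaves implicit---that one must pass to finite point-generated subframes of $\mathfrak{G}_1$ to invoke soundness for $\mathcal{C}$, and that surjectivity of the $p$-morphism onto the generated subframe is what makes the transfer go through at the specific falsifying point---but the underlying strategy is identical.
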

It remains only to verify Proposition \ref{pmorph}.

\begin{proof} [Proof Sketch of Proposition \ref{pmorph}]
The proof proceeds by induction on the number of points in a frame in $\mathcal{C}$. The basic case is obvious. Supposing we have a frame with one point, say $x$, then consider the subframe generated by $\langle 0,0 \rangle$, and the $p$-morphism mapping $\langle 0,0 \rangle$ to $x$.

Supposing we have a frame in $C$ with $n+1$ points, consider the subframe $\mathbb{F} = \langle W,R \rangle$ generated by some point $x\in C$. We would like to use the inductive hypothesis to obtain a $p$-morphism to some subframe of $\mathbb{F}$ containing $\leq n$ points, and extend it to all of $\mathbb{F}$. To do this we consider three cases: (i) $x$ has no successors; (ii) $x$ has one immediate successor (i.e. point $y$ such that $xRy$ and there is no $z$ with $xRzRy$); and (iii) $x$ has two immediate successors. More than 2 immediate successors is ruled out by property \textit{C2}.

Case (i) is trivial. For case (ii), let $\mathbb{F'}$ be $\mathbb{F}$ without the point $x$, and let $y$ be the unique immediate successor of $x$. Then since $\mathbb{F'} \in \mathcal{C}$ and it has $n$ points, we have a $p$-morphism $f$ from the subframe generated by some point $\langle m,i \rangle$ in $\mathfrak{G}_1$ to $\mathbb{F'}$, the subframe generated by $y$. We then consider the subframe generated by $\langle m+1,i \rangle$ instead, and extend the $p$-morphism $f$ so that $f(\langle m+1,i \rangle) = x$ and $f(\langle m,i-1 \rangle) = y$.

Verifying case (iii) is similar, except that instead of removing the point $x$, we must remove the `maximal' points of $\mathbb{F}$. Then the $p$-morphism obtained by inductive hypothesis is extended by shifting each point in the morphism by one. Thus, e.g. if $\langle m,i \rangle$ is mapped to $y$, then in the new mapping $\langle m+1,i \rangle$ is mapped to $y$. And we let $f(\langle 0,0 \rangle) = f(\langle 0,1 \rangle) = x$. The details are straightforward and are left to the reader (or can be found in \cite{Ica07}).
\end{proof}

\end{subsection}

\begin{remark} The methods in this section carry over to the general case of frames $\mathfrak{G}_n$ for arbitrary $n$. By an analogous argument, one can prove the logic is simply \textit{Q2} (strong confluence) and the axiom corresponding to ``non-n+2-ary-branching", which is just a generalization\footnote{It is not hard to see that $\Box(\Box A \to B) \vee \Box(\Box^+ B \to A)$ is equivalent to $\Box(\Box^+ A \to B) \vee \Box(\Box^+ B \to A)$ over \textbf{GL}. } of non-branching and non-triple-branching:
\[\bigvee_{i \leq n+1} \Box(\Box^+ A_i \to \bigvee_{i\neq j} A_j).\]
\end{remark}

\end{section}

\begin{section}{On the proof of Solovay's Theorem} \label{section:Solovay}
In Sections \ref{section:closedfragment} and \ref{section:glp} we showed that \textbf{PL}$_{\mathcal{F}}(T) =$ \textbf{GL.3} for a wide range of arithmetical theories $T$ and fragments $\mathcal{F}$. Otherwise put, \textbf{PL}$_{\mathcal{F}}(T)$ gives us the logic of non-branching \textbf{GL}-frames. \textit{Prima facie}, one might imagine the possibility of strategically adding sentences into the fragment $\mathcal{F}$ (where $\mathcal{F}$ is, e.g. $\mathcal{B}$), so as to obtain the logic of non-triple-branching \textbf{GL}-frames, then that of non-quadruple-branching \textbf{GL}-frames, and so on. Assuming this could be generalized it would be possible to define an infinite fragment $\mathcal{H}$, for which \textbf{PL}$_{\mathcal{H}}(T) =$ \textbf{GL}. At that point, to the extent that Solovay's Theorem is not already assumed in the determination of $\mathcal{H}$, we would have a new proof of the result. After all, any non-theorem of \textbf{GL} can be falsified on some finite, and thus finitely branching, frame. So the witnessing realization would make use of some finite subset of the fragment, sufficient to falsify the formula.

What we have shown is that the first step in this process is (almost) possible, \textit{vis-\`{a}-vis} Corollary \ref{corollary:logicofq}. Adding the constant for \textsf{I$\Sigma _1$} and capitalizing on the well studied relationship between that theory and \textsf{PRA}, we are able to obtain the logic of non-triple-branching (and strongly confluent) \textbf{GL}-frames. Two important questions remain, however, before taking the next step.

The first and most obvious question is what the further constants will be. The particular case of \textsf{I}$\Sigma _1$ and \textsf{PRA} is already well studied. Going beyond that may require some significant arithmetical investigation. In Section \ref{subsection:conditions} we isolated what arithmetical facts are sufficient to hold. So on the proposed strategy it would simply be a matter of finding a theory and a fragment that satisfy these requirements.

The second, and more curious, question is how to dispense with property \textit{C3}, strong confluence. We have seen that the logic of the frame \textbf{G}$_n$ always contains the formula \textit{Q2}, and so it will clearly remain in the limit. However \textit{Q2} is obviously not a theorem of \textbf{GL}. Finding constants whose associated provability logics do not validate \textit{Q2} may prove a challenge. Understanding this situation may shed light on Solovay's original proof.

\end{section}

\section{Interpretability Logics with Restricted Substitutions}\label{section:InterpretLogics}

Interpretations are used throughout mathematics and logic.
Loosely speaking, an interpretation from a theory $V$ into a theory $U$ is structure preserving map that translates theorems of $V$ to theorems of $U$. The notion of interpretability that we discuss below is \emph{grosso modo} that of \cite{Tarski} and details can be found in, e.g.\ \cite{JapJongh} or in \cite{Visser97}.   

\subsection{Interpretability Logics}
Interpretability can be seen as a generalization of provability. By $\alpha \rhd_T \beta$ we denote a natural formalized version of the statement that $T + \beta$ is interpretable in $T+\alpha$.

\emph{Interpretability Logics} are designed to capture the structural behavior of formalized interpretability. The language of these logics is that of provability logic together with a binary modality $\rhd$, orthographically identical to the arithmetical operator, to model formalized interpretability. And indeed, \emph{arithmetical realizations} are extended as expected by imposing that 
\[
(A \rhd B)^* \ =\  A^* \rhd B^*. 
\]  
For a clear distinction, let \formil denote the class of modal formulas in language of interpretability logic and \formgl the standard modal language of basic provability logic. In analogy with the definition of \textbf{PL}$(T)$ we define \textbf{IL}$(T)$, the interpretability logic of a theory $T$
\[
\textbf{IL}(T) := \{  A \in \formil \mid \forall * \ T \vdash A^*\}  \ \ \ \mbox{and}  \ \ \ \textbf{IL}_{\Gamma}(T) := \{  A \in \formil \mid \forall *{\in} \Gamma \ T \vdash A^*\}.
\]
By Theorem \ref{theorem:solovay} and Footnote \ref{footnote:ClassOfTheories} we see that provability logics are the same for all sufficiently strong theories. This is certainly not the case for interpretability logics, which turn out to be more sensitive to differences between theories. One such example is the notion of an \emph{essential reflexive} theory. 

A theory is \emph{reflexive} if it proves the consistency of any finite subpart of it. A theory is essentially reflexive whenever any finite extension of it is reflexive. The following theorem is due independently to A.\ Berarducci and V. Shavrukov. The definition of \ilm will follow below.
\begin{theorem}[Berarducci \cite{bera:inte90},  Shavrukov \cite{shav:logi88}]\label{theo:shav}
If $T$ is an essentially reflexive and $\Sigma_1$ sound theory, then $\intl{T}=\ilm$.
\end{theorem}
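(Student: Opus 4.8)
\textbf{Proof proposal for Theorem \ref{theo:shav}.}

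The plan is to follow the standard Berarducci--Shavrukov argument, which splits into the routine soundness half and the harder arithmetical completeness half, and to keep everything at the level of the essentially reflexive theories under consideration. For soundness, \emph{i.e.} $\ilm \subseteq \intl{T}$, one checks that each axiom schema of \ilm is provable in $T$ under every arithmetical realization $*$. Since $T$ is essentially reflexive, the crucial Montagna-style axiom $\ilm$, namely $A \rhd B \to (\Diamond A \to \Diamond B)$ (or the equivalent ``$\Box$ over $\rhd$'' principle), is arithmetically sound: interpretability of $T+B$ in $T+A$, together with the reflexiveness of all finite extensions of $T$, yields that consistency of $T+A$ provably implies consistency of $T+B$. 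The remaining \il axioms (the \gl axioms for $\Box$, together with the core interpretability schemata J1--J5) are sound over any reasonable theory, so this half amounts to collecting known facts and formalizing the reflexiveness argument inside $T$; I expect no real difficulty here.

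For completeness, $\intl{T} \subseteq \ilm$, the task is: whenever $\ilm \nvdash A$, produce a realization $*$ with $T \nvdash A^*$. Here I would invoke the modal completeness of \ilm with respect to its class of Veltman (or generalized Veltman) frames: pick a finite \ilm-countermodel $\mathcal M$ falsifying $A$ at its root, add a new root below it as in Solovay's construction, and define a primitive recursive ``Solovay function'' $f$ climbing the tree, via an arithmetical fixed point, exactly as in the provability case but now also tracking the $S$-relation needed to interpret $\rhd$. The limit statements $\lambda_y$ again give the realization $p^* := \bigvee_{\mathcal M, y \Vdash p} \lambda_y$. The new ingredient, relative to the \gl case, is verifying the $\rhd$-clause: when $\mathcal M, y \Vdash B \rhd C$, one must exhibit, provably in $T$, an interpretation of $T + (B\rhd C)^*$-style statements, and this is exactly where essential reflexiveness is used --- it supplies, uniformly and formalizably, interpretations witnessing $\lambda_y$-relativized consistency claims (the Henkin/Orey-style construction of interpretations from consistency, available because every finite extension of $T$ proves the consistency of its finite subtheories).

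The main obstacle is precisely this $\rhd$-step: one must show that the modal frame condition defining $y \Vdash B \rhd C$ translates, under $*$, into an \emph{actual formalized interpretability} $B^* \rhd C^*$ provable in $T$, and conversely that failures of $B \rhd C$ in $\mathcal M$ become failures of $B^* \rhd C^*$ (this direction uses that $T$ is $\Sigma_1$-sound, so the Solovay function really stays at the new root in the standard model, making the relevant $\lambda_y$ true/false as dictated by the frame). Getting the interpretations to be uniform in the fixed-point parameter, and checking that the Veltman $S_x$-relation is faithfully mirrored, is the delicate bookkeeping; everything else is either the cited modal completeness of \ilm or a direct adaptation of Solovay's proof of Theorem \ref{theorem:solovay}. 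I would structure the write-up as: (1) recall Veltman semantics and modal completeness of \ilm; (2) soundness via essential reflexiveness; (3) the Solovay construction and the verification of the $\Box$-clauses (citing Theorem \ref{theorem:solovay}); (4) the verification of the $\rhd$-clause, which is the heart of the matter.
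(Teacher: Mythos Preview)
The paper does not prove this theorem: it is stated as a background result with attribution to Berarducci and Shavrukov, and no argument is supplied. So there is nothing in the paper to compare your proposal against.

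Your outline is, broadly, the standard Berarducci--Shavrukov strategy (soundness from essential reflexiveness; completeness via a Solovay-type function on a finite Veltman countermodel), and as a high-level plan it is correct. One slip worth fixing: the formula you call ``the crucial Montagna-style axiom \ilm'', namely $A \rhd B \to (\Diamond A \to \Diamond B)$, is actually the base-\il axiom {\sf J4}, not {\sf M}. The schema {\sf M} is $A \rhd B \to (A \wedge \Box C) \rhd (B \wedge \Box C)$, and \emph{its} arithmetical soundness is where essential reflexiveness genuinely enters on the soundness side (via the Orey--H\'ajek characterization of interpretability as $\Pi_1$-conservativity over reflexive extensions, Theorem~\ref{theorem:OH} here). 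If you were to write this up, that is the place to be precise.
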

However, if a theory is finitely axiomatizable we get a different outcome where, again, \ilp is defined below.
\begin{theorem}[Visser \cite{viss:inte90}] \label{thm:ilp}
If $T$ is finitely axiomatizable, $\Sigma_1$ sound, and extending  $\textsf{I}\Delta_0 + {\sf supexp}$, then $\intl{T}=\ilp$.
\end{theorem}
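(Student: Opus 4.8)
\textbf{Proof proposal for Theorem \ref{thm:ilp} (Visser's theorem).}

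The plan is to establish the two inclusions $\ilp \subseteq \intl{T}$ (arithmetical soundness) and $\intl{T} \subseteq \ilp$ (arithmetical completeness) separately, since they require quite different techniques. For soundness, I would verify that each axiom schema of \ilp, under every arithmetical realization $*$, is provable in $T$. The principle $\principle{P}$ (the \emph{persistence} principle, $A \rhd B \to \Box(A \rhd B)$ in one common formulation, or $A\rhd B \to (\Diamond A \rhd B)$ depending on the axiomatization being used) should be verified by a careful formalized argument: if $T+\beta$ is interpretable in $T+\alpha$ via some interpretation $j$, then because $T$ is finitely axiomatizable the whole interpretation statement is of bounded complexity and $T$ — provided it extends $\textsf{I}\Delta_0 + {\sf supexp}$, which guarantees the relevant cut-elimination and the formalizability of the Orey–Hájek style characterizations — can prove that this interpretability fact is itself provable. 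The hypothesis $\Sigma_1$-soundness is used to rule out pathological interpretations and to transfer the modal soundness argument for \gl (which already holds at this strength, as noted in the discussion around $\textsf{I}\Delta_0+\exp$) to the full interpretability language.

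For the completeness direction, the approach is a Solovay-style construction adapted to finitely axiomatizable theories. Given $\ilp \nvdash A$, one first invokes the modal completeness of \ilp with respect to its class of \emph{Veltman} (or simplified Veltman) frames — a finite such frame $\mathcal{M}$ falsifying $A$ — and then builds an arithmetical realization $*$ so that $T \nvdash A^*$. The crucial new ingredient over the pure provability case is that, for finitely axiomatizable $T$, one has available the Friedman-style or Orey–Hájek characterization of $\rhd_T$: $T+\alpha \rhd_T T+\beta$ is equivalent (provably, using ${\sf supexp}$ for the requisite proof-theoretic transformations) to the statement that $T+\alpha$ proves the consistency of every finite subtheory of $T+\beta$ — which in the finitely axiomatized case collapses to $\Box_\alpha \Diamond_\alpha \beta$-type conditions, in turn expressible with provability predicates of restricted complexity. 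One then runs a Solovay function on the frame $\mathcal{M}$ (with a new root adjoined), defines $p^* := \bigvee_{w \Vdash p} \lambda_w$ as before, and checks that the modal satisfaction relation is mirrored arithmetically; the $\rhd$-clauses are handled using the characterization just described together with the frame condition defining $\Vdash$ for $\rhd$.

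The main obstacle is the $\rhd$-case in the completeness proof: one must show that $\mathcal{M}, w \Vdash B \rhd C$ translates correctly, i.e. that $T + B^* $ (localized to the limit-statement situation at $w$) interprets $T + C^*$ exactly when the frame says so. This requires constructing an explicit interpretation when the frame relation holds — typically by a "definable cut" argument, shortening a cut in a model of $T+B^*$ to obtain a model of $T+C^*$, which is where finite axiomatizability and ${\sf supexp}$ are essential (one needs the consistency statements to be provable with enough room, and cut-shortening arguments to be formalizable) — and conversely showing non-interpretability when the frame relation fails, which uses $\Sigma_1$-soundness so that the arithmetized interpretability statement reflects down to a true statement. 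I expect the bookkeeping in coordinating the Solovay function's provable limit behavior with the Veltman-frame semantics for $\rhd$ (in particular handling the auxiliary relations $S_w$ in the Veltman semantics) to be the most delicate part, but each piece is by now standard in the interpretability-logic literature (\cite{viss:inte90}, \cite{Visser97}) and I would cite those sources for the detailed verifications rather than reproducing them.
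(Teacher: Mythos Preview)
The paper does not prove this theorem at all: it is stated as a known result due to Visser and simply cited to \cite{viss:inte90}, with no accompanying argument. So there is no ``paper's own proof'' to compare against; the theorem functions here purely as background for the later discussion of $\intl{\pra}$.

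Your sketch is a reasonable outline of how the result is actually established in the literature (modal completeness of \ilp with respect to Veltman frames, followed by a Solovay-style embedding using the Friedman/Orey--H\'ajek characterization of interpretability for finitely axiomatized theories, with ${\sf supexp}$ providing the needed cut-elimination), and it is appropriate that you cite \cite{viss:inte90} and \cite{Visser97} for the details rather than reproduce them. But for the purposes of this paper, nothing beyond the citation is required or given.
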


A prominent problem in formalized interpretability is to determine the maximal interpretability logic that is contained in any reasonable arithmetical theory. 

\begin{definition}\label{defi:ilall}
The interpretability logic of all reasonable arithmetical theories, 
written \intl{All}, is the set of formulas $\varphi$ such that for all $T$ and $*$,  $T\vdash \varphi^*$. Here we let $T$ range over all reasonable\footnote{The boundaries are not exactly determined and will depend a bit on the answer. It is legitimate to think of any theory extending $\textsf{I}\Delta_0 + \exp$.} arithmetical theories. 
\end{definition}

Clearly, \intl{All} is in the intersection of \ilm and \ilp but apparently it possesses a very rich structure (see \cite{joo:prol00}, and \cite{GorisJoostenNewPrinc}). In this paper, it is only important to know that a certain very weak logic to be defined below is part of \intl{\pra}.
\begin{fact}\label{fact:IlwInILall}
$\ilw \subset \intl{\pra}$
\end{fact}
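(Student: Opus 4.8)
\textbf{Proof proposal for Fact~\ref{fact:IlwInILall}.}

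The plan is to verify the two halves of the claimed strict inclusion separately: first that every theorem of $\ilw$ is in $\intl{\pra}$ (soundness), and then that some formula separates the two (strictness). For the soundness direction, the natural approach is not to reprove things from scratch but to invoke the known arithmetical soundness results for $\ilw$ with respect to finitely axiomatizable $\Sigma_1$-sound theories extending a weak base, together with the machinery of restricted realizations developed in this paper. More precisely, $\ilw$ is known to be arithmetically sound for a wide class of theories (it is one of the core principles that survives in $\intl{\text{All}}$), and in particular $\ilw\subseteq\ilp$, so by Theorem~\ref{thm:ilp} every axiom schema and rule of $\ilw$ is validated under all arithmetical realizations in any finitely axiomatizable $\Sigma_1$-sound $T$ extending $\textsf{I}\Delta_0+{\sf supexp}$. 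The point I would stress is that the principle $\ilw$ itself (unlike the full logic $\ilp$ or $\ilm$) holds already in all reasonable theories, hence in $\pra$: one checks that the extra axiom of $\ilw$ beyond $\il$ is provable under the arithmetical interpretation using only general properties of formalized interpretability (the fixed-point/Orey--H\'ajek style construction) that are available in $\pra$, not properties special to finitely axiomatized or to essentially reflexive theories. Closure under the rules is immediate since $\pra$ proves enough about $\rhd$ and ${\sf Bew}_{\pra}$.

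For the strictness, I would exhibit a concrete formula in $\intl{\pra}$ that is not a theorem of $\ilw$. The obvious candidate is $\ilm$'s characteristic axiom, or rather a consequence of it known to hold in $\pra$: since $\pra$ is $\Sigma_1$-sound and — although $\pra$ is \emph{not} essentially reflexive, this is exactly the subtlety flagged in the abstract — one still has that enough of the $\ilm$-style reasoning goes through for $\pra$ because of Parsons' theorem and the tight $\textsf{I}\Sigma_1$/$\pra$ relationship exploited earlier in the paper. Concretely I would argue that $\intl{\pra}\not\subseteq\ilw$ by noting that a principle separating $\ilw$ from $\ilp$ (for instance $\ilp$'s own axiom, $A\rhd B\to \Box(A\rhd B)$ in the relevant form, or the $\ilw^*$/$\ilp_0$ principles) is arithmetically valid in $\pra$: one verifies the required $\Sigma_1$-ness of interpretations in $\pra$ and concludes the schema holds. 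Since that schema is independent over $\ilw$ by the modal completeness of $\ilw$ with respect to its Veltman-frame semantics, strictness follows.

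The main obstacle I anticipate is the strictness half, specifically pinning down a separating principle that is genuinely provable in $\pra$. The subtlety is that $\pra$ sits awkwardly between the essentially reflexive regime (Theorem~\ref{theo:shav}, giving $\ilm$) and the finitely axiomatized regime (Theorem~\ref{thm:ilp}, giving $\ilp$), and $\intl{\pra}$ is a genuine open problem; so one must be careful to pick a principle that is robustly below this open target. I would resolve this by using only the \emph{lower} bounds: any principle known to be in $\intl{\text{All}}$ but independent over $\ilw$ works, and such principles are documented in \cite{joo:prol00} and \cite{GorisJoostenNewPrinc}. For the soundness half, the only care needed is to confirm that the $\ilw$-axiom's arithmetical proof does not secretly use reflexivity or finite axiomatizability; this is standard but should be stated explicitly. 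A clean writeup would therefore read: (1) cite/recall that $\ilw$ is arithmetically sound for all reasonable theories, hence $\ilw\subseteq\intl{\pra}$; (2) pick a principle $\pi\in\intl{\text{All}}\subseteq\intl{\pra}$ with $\ilw\nvdash\pi$, using modal completeness of $\ilw$ to witness the non-derivability; (3) conclude $\ilw\subsetneq\intl{\pra}$.
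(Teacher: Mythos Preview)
The paper does not actually prove this Fact; it is stated without proof as background, with the surrounding text pointing to the literature on $\intl{\text{All}}$ (notably \cite{joo:prol00} and \cite{GorisJoostenNewPrinc}) for the rich structure above \ilw. Your final three-step summary --- (1) $\ilw\subseteq\intl{\text{All}}\subseteq\intl{\pra}$ by general arithmetical soundness of $\principle{W}$, (2) pick some $\pi\in\intl{\text{All}}$ with $\ilw\nvdash\pi$ from the cited papers, (3) conclude strictness --- is exactly the intended justification and is correct.

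However, your middle paragraph contains a real error that you should excise, not merely hedge away. You propose witnessing strictness via the $\ilp$-axiom $A\rhd B\to\Box(A\rhd B)$, arguing that ``one verifies the required $\Sigma_1$-ness of interpretations in \pra''. This is false: \pra is not finitely axiomatized, so interpretability over \pra is genuinely $\Sigma_3$ (cf.\ the discussion around Theorem~\ref{theorem:OH} and \cite{Shav97}), and the $\Sigma_1$-completeness route to $\principle{P}$ is simply unavailable. In fact the paper later records (Subsection~\ref{subsection:refutingM}, citing \cite{Visser97}) that even the weaker instance $A\rhd\Diamond B\to\Box(A\rhd\Diamond B)$ is \emph{not} in $\intl{\pra}$. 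So $\principle{P}$ cannot serve as your separating principle, and the ``$\Sigma_1$-ness'' reasoning is exactly the mistake one makes by conflating \pra with the finitely axiomatized regime of Theorem~\ref{thm:ilp}. Your fallback to principles from $\intl{\text{All}}\setminus\ilw$ is the right repair; just present that directly and drop the $\principle{P}$ detour.
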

For most theories that do not fall under Theorems \ref{theo:shav} and \ref{thm:ilp}, the interpretability logic is  unknown. The theory \pra is a notable example: the logic \intl{\pra} is still unknown. The most recent results for \intl{\pra} are presented in \cite{BilDickJo}. 

\pra is known to be the same as \ir{1} where \ir{n} is defined as $\textsf{I}\Delta_0 + \exp$ plus the $\Sigma_n$ induction rule. See for example \cite{Bek:ReflArith}. In that paper a proof can also be found for the following theorem.

\begin{theorem}\label{theorem:ReflexiveExtensionsOfRules}
 \ir{n} is reflexive, as is any extension of \ir{n} by $\Sigma_{n+1}$ formulas.
\end{theorem}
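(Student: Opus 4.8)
The plan is to reduce both assertions to one uniform statement about $\ir{n}$ and prove that by induction on the number of applications of the $\Sigma_n$-induction rule; throughout $n\ge 1$, the case $n=0$ being excluded since $\ir{0}=\ea$ is finitely axiomatized and hence not reflexive. Say that a theory $U\supseteq\ea$ is \emph{$\Pi_{n+1}$-reflexive} if it proves the uniform reflection principle $\mathrm{RFN}_{\Pi_{n+1}}(S)$ for every finitely axiomatized $S\subseteq U$, where $\mathrm{RFN}_{\Pi_{n+1}}(S)$ is formulated by means of a $\Pi_{n+1}$ partial truth predicate and is itself, up to provable equivalence, a $\Pi_{n+1}$ sentence (the matrix $\neg\mathrm{Pr}_S(\cdot)\vee\mathrm{Tr}_{\Pi_{n+1}}(\cdot)$ is a disjunction of a $\Pi_1$ and a $\Pi_{n+1}$ formula, hence $\Pi_{n+1}$, and prefixing its two universal quantifiers keeps it $\Pi_{n+1}$). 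The key claim is that $\ir{n}$ is $\Pi_{n+1}$-reflexive; call this $(\ast)$. First I would record that $(\ast)$ implies the theorem. Every finite subtheory of $\ir{n}$ is contained in a finitely axiomatized one — just add the finitely many rule-conclusions occurring in the relevant derivations to $\ea$ — and $\mathrm{RFN}_{\Pi_{n+1}}(S)$ implies $\mathrm{RFN}_{\Pi_1}(S)$, which over $\ea$ is equivalent to $\mathrm{Con}(S)$; so $\ir{n}$ is reflexive. For $T=\ir{n}+\Phi$ with $\Phi$ a set of $\Sigma_{n+1}$ sentences, a finite subtheory of $T$ sits inside $S_0+\{\phi_1,\dots,\phi_k\}$ with $S_0\subseteq\ir{n}$ finitely axiomatized and $\phi_j\in\Phi$; inconsistency of this theory would give $S_0\vdash\neg\phi_1\vee\dots\vee\neg\phi_k$, a $\Pi_{n+1}$ sentence, whence by $\mathrm{RFN}_{\Pi_{n+1}}(S_0)$ (a theorem of $T$ by $(\ast)$) that disjunction would be true, contradicting $T\vdash\phi_j$ for all $j$. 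So $T$ is reflexive too.

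To prove $(\ast)$, write a finitely axiomatized $S\subseteq\ir{n}$ as $S\subseteq T_m:=\ea+\{\psi_1,\dots,\psi_m\}$, where each $\psi_i=\forall x\,\varphi_i(x)$ is a conclusion of the rule with $\varphi_i\in\Sigma_n$ and with $T_{i-1}\vdash\varphi_i(0)\wedge\forall x(\varphi_i(x)\to\varphi_i(x+1))$, and argue $\ir{n}\vdash\mathrm{RFN}_{\Pi_{n+1}}(T_m)$ by induction on $m$. For $m=0$ I would invoke the classical fact $\isig{n}\vdash\mathrm{RFN}_{\Pi_{n+1}}(\ea)$ (via a $\Sigma_n$ partial truth predicate and $\Sigma_n$-induction along proofs) together with the formalized generalized Parsons--Mints--Takeuti theorem that $\ir{n}$ and $\isig{n}$ prove the same $\Pi_{n+1}$ sentences; since $\mathrm{RFN}_{\Pi_{n+1}}(\ea)$ is $\Pi_{n+1}$, it is already a theorem of $\ir{n}$. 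For the step, reasoning inside $\ir{n}$: the formula $\mathrm{Pr}_{T_{m-1}}(\ulcorner\varphi_m(\dot b)\urcorner)$ is $\Sigma_1$, hence $\Sigma_n$, and $\ea$ proves its value at $0$ and its closure under successor (provable $\Sigma_1$-completeness plus formalized modus ponens, using the premises available in $T_{m-1}$); so closure of $\ir{n}$ under the $\Sigma_n$-induction rule gives $\forall b\,\mathrm{Pr}_{T_{m-1}}(\ulcorner\varphi_m(\dot b)\urcorner)$, and hence $\forall b\,\mathrm{Pr}_{T_{m-1}}(\ulcorner\bigwedge_{i\le b}\varphi_m(i)\urcorner)$. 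Given $\mathrm{Pr}_{T_m}(\ulcorner\chi\urcorner)$ for $\chi\in\Pi_{n+1}$, peel off from a $T_m$-proof of $\chi$ the finitely many instances of $\psi_m$ used — all instantiated at terms bounded by some $b$ read off the proof — so that the formalized deduction theorem yields $\mathrm{Pr}_{T_{m-1}}(\ulcorner(\bigwedge_{i\le b}\varphi_m(i))\to\chi\urcorner)$; formalized modus ponens then gives $\mathrm{Pr}_{T_{m-1}}(\ulcorner\chi\urcorner)$, and the inductive hypothesis applied to the standard $\Pi_{n+1}$ sentence $\chi$ (using the $\ea$-provable Tarski biconditional for $\chi$) gives $\chi$. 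This establishes $\mathrm{RFN}_{\Pi_{n+1}}(T_m)$.

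The combinatorial skeleton above is straightforward; the real work — and the reason the theorem is quoted with a reference to \cite{Bek:ReflArith} rather than proved here — is the complexity bookkeeping it conceals: one must check that the partial truth predicates have their compositional clauses and standard-instance Tarski biconditionals provable already in $\ea$, that $\mathrm{RFN}_{\Pi_{n+1}}(S)$ genuinely lands in $\Pi_{n+1}$, and, above all, one must have the formalized generalized Parsons theorem in hand for the base case. I would regard that last ingredient as the principal obstacle. A secondary point worth flagging is that the inductive step must be arranged exactly as above, so that reflection is applied only to the $\Pi_{n+1}$ sentence $\chi$ and never to the auxiliary conjunctions $\bigwedge_{i\le b}\varphi_m(i)$; otherwise one would need $\Sigma_n$-collection to control the complexity of those conjunctions, which is not available at this strength.
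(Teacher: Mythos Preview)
The paper does not prove this theorem; it merely quotes it from \cite{Bek:ReflArith}, as you yourself note. So the only question is whether your argument stands on its own, and the inductive step does not.

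The problematic move is the claim that from $\mathrm{Pr}_{T_m}(\ulcorner\chi\urcorner)$ one may ``peel off'' finitely many numeral instances of $\psi_m=\forall x\,\varphi_m(x)$ and obtain $\mathrm{Pr}_{T_{m-1}}\bigl(\ulcorner(\bigwedge_{i\le b}\varphi_m(i))\to\chi\urcorner\bigr)$. The deduction theorem only gives $\mathrm{Pr}_{T_{m-1}}(\ulcorner\psi_m\to\chi\urcorner)$; replacing the genuinely universal hypothesis $\psi_m$ by a finite conjunction of its numeral instances is a Herbrand-type step that simply fails here. For a direct counterexample take $\chi:=\psi_m$ itself (which is $\Pi_{n+1}$, so admissible): trivially $T_m\vdash\chi$, yet $T_{m-1}\vdash(\bigwedge_{i\le b}\varphi_m(i))\to\forall x\,\varphi_m(x)$ would force $T_{m-1}\vdash\psi_m$, since $T_{m-1}$ already proves each numeral instance $\varphi_m(\bar i)$; and generically $T_{m-1}\nvdash\psi_m$ (otherwise the rule application was redundant). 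Nor can you sidestep this by applying the inductive hypothesis to $\psi_m\to\chi$: that implication is $\Pi_{n+1}\to\Pi_{n+1}$, hence not $\Pi_{n+1}$, exactly the complexity obstacle you flag in your final paragraph. Switching to $\Sigma_{n+1}$-reflection would make the step go through, but then the base case collapses, since $\mathrm{RFN}_{\Sigma_{n+1}}(\ea)$ already axiomatizes $\isig{n}\supsetneq\ir{n}$ over $\ea$.

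What Beklemishev actually does in \cite{Bek:ReflArith} is prove that closure under the $\Sigma_n$-induction rule coincides, over $\ea$, with closure under an appropriate $\Pi_{n+1}$-\emph{reflection rule}; reflexivity (indeed $\Pi_{n+1}$-reflexivity) then drops out of the description of $\ir{n}$ as an iterated-reflection tower. The non-trivial direction of that equivalence---showing that a single $\Pi_{n+1}$-reflection step absorbs one layer of the induction rule---is precisely the content missing from your inductive step, and it requires a different manoeuvre (essentially, reflecting on the $\Sigma_n$ instances $\varphi_m(\dot b)$ uniformly rather than trying to reduce the $T_m$-proof of $\chi$ to a $T_{m-1}$-proof).
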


The logical complexity of interpretability is $\Sigma_3$ and in \cite{Shav97} it is shown that it is essentially so. However, by a theorem due to Orey and H\'ajek we can often reduce the $\Sigma_3$ notion of interpretability to the $\Pi_2$ notion of $\Pi_1$-conservativity. A theory $V$ is $\Pi_1$-conservative over $U$, we write $U\rhd_{\Pi_1}V$, whenever for all $\Pi_1$ sentences $\pi$ we have that [$V\vdash \pi$ implies $U\vdash \pi$]. 

\begin{theorem}[Orey-H\'ajek]\label{theorem:OH}
For reflexive theories $U$ and $V$ we have 
\[
(U\rhd V ) \ \ \ \Leftrightarrow \ \ \ (U \rhd_{\Pi_1}V)
\]
and this equivalence is provable in \ea.
\end{theorem}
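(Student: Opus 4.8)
\textbf{Proof proposal for the Orey--H\'ajek characterization (Theorem~\ref{theorem:OH}).}

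The plan is to prove the two implications separately, the ``$\Rightarrow$'' direction being essentially trivial and the ``$\Leftarrow$'' direction carrying all the content. For the easy direction, suppose $U \rhd V$ via an interpretation $j$. If $\pi$ is a $\Pi_1$ sentence with $V \vdash \pi$, then $U$ proves the translation $\pi^j$; using that $U$ (being reflexive, hence in particular extending enough arithmetic) proves that the domain of $j$ is nonempty and that $\Pi_1$ sentences are downward-absolute from the ambient model to any definable cut/subclass, one gets $U \vdash \pi$. This step is a routine formalization; the only care needed is that cut-elimination/relativization preserves $\Pi_1$ truth, which is standard and low-complexity, so it goes through in \ea.

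For the hard direction, assume $U \rhd_{\Pi_1} V$; we must build an interpretation of $V$ in $U$. The standard route is the Henkin--Feferman arithmetized completeness theorem: inside $U$ one defines a complete consistent extension of $V$ and interprets it via the definable term model. The point where $\Pi_1$-conservativity enters is that to run this construction one needs $U$ to prove $\mathrm{Con}(V + \mathrm{Th}_{\Pi_1}(\mathbb{N}))$, or more precisely, for each finite subtheory $V_0 \subseteq V$, that $U$ proves the consistency of $V_0$ together with the true $\Pi_1$ sentences (the latter being available to $U$ because $U$ is reflexive and $\Pi_1$-soundness of its own $\Pi_1$-theorems propagates). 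Reflexivity of $U$ gives $U \vdash \mathrm{Con}(U_0)$ for finite $U_0$; combining this with $U \rhd_{\Pi_1} V$ — which says every $\Pi_1$ theorem of $V$ is already a theorem of $U$ — lets $U$ verify the consistency of each finite fragment of $V$ relative to the $\Pi_1$ facts it needs, and then reflexivity again lets $U$ assemble these into an interpretation (using that an interpretation of $V$ only ever invokes finitely many axioms of $V$ in any given proof, so a ``local'' interpretation suffices, cf.\ the Orey compactness trick). All of this reasoning about provability predicates, consistency statements, and the arithmetized completeness theorem is formalizable at the $\Pi_2$/$\Sigma_1$ level, hence provable in \ea.

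The main obstacle is the bookkeeping in the ``$\Leftarrow$'' direction: one must be careful that the arithmetized completeness theorem is applied not to $V$ itself (whose consistency $U$ need not prove outright) but to each finite subtheory, and that the resulting ``piecewise'' interpretations cohere into a genuine interpretation of all of $V$ — this is exactly where reflexivity of $V$ (proving consistency of its finite subparts, which feeds the Orey-style compactness argument) and reflexivity of $U$ are both used. Rather than reproving all of this, I would cite the classical Orey--H\'ajek result (see \cite{JapJongh} or \cite{Visser97}) and simply remark that an inspection of the proof shows every step is elementary, so the biconditional is provable in \ea; the formalization is by now folklore and the details add little.
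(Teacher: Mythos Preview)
The paper does not supply a proof of this theorem; it simply quotes the Orey--H\'ajek characterization as a known result and uses it as a black box later (in the proof of Lemma~\ref{restrict}). So your closing suggestion---to cite the classical sources rather than reprove the details---is precisely what the paper does.

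That said, your sketch of the ``$\Rightarrow$'' direction contains a real error. You argue that from $U \vdash \pi^{j}$ one obtains $U \vdash \pi$ because ``$\Pi_1$ sentences are downward-absolute from the ambient model to any definable cut/subclass.'' This is off in two respects. First, the domain of an arbitrary interpretation $j$ is not in general a cut, or even a substructure, of the ambient natural numbers; it is a freshly defined structure whose arithmetic may sit skew to that of the ground model. Second, even when one arranges a common definable cut, the transfer you need runs \emph{from} the interpreted structure \emph{to} the full ambient model---that is upward absoluteness, which holds for $\Sigma_1$ sentences, not $\Pi_1$. The standard route instead passes through consistency statements: reflexivity of $U$ gives $U \vdash \mathrm{Con}(U_m)$ for each finite $U_m$; since some such $U_m$ already interprets the finite fragment $V_n$ deriving $\pi$, \ea proves $\mathrm{Con}(U_m) \to \mathrm{Con}(V_n)$; and provable $\Sigma_1$-completeness (applied to the $\Sigma_1$ sentence $\neg\pi$) yields $\mathrm{Con}(V_n) \to \pi$ in \ea, whence $U \vdash \pi$.

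Your ``$\Leftarrow$'' sketch is essentially on target: reflexivity of $V$ plus $\Pi_1$-conservativity give $U \vdash \mathrm{Con}(V_n)$ for every $n$, and then the arithmetized completeness theorem together with Orey's compactness argument produces the interpretation. The detour through $\mathrm{Th}_{\Pi_1}(\mathbb{N})$ is unnecessary, though not harmful.
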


One advantage of this characterization is evidently that the logical complexity of $\Pi_1$-conservativity is lower than that of interpretability. Another advantage is that the so-called $\Pi_1$-conservativity logic is a relatively stable notion. The $\Pi_1$-conservativity logic of a theory $T$ is just the set of modal formulas in \formil that are provable in $T$ under any arithmetical realization where the $\rhd$ modality is mapped to $\rhd_{\Pi_1}$.

\begin{theorem}\label{theorem:PiILM}
For any sound theory $T$ extending $\textnormal{\textsf{I}}\Pi_1^-$ we have that the $\Pi_1$-conservativity logic of $T$ is \ilm.
\end{theorem}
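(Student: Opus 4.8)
The statement to prove is that for any sound theory $T\supseteq \textsf{I}\Pi_1^-$, the $\Pi_1$-conservativity logic of $T$ is exactly \ilm. The plan is to establish the two inclusions separately, with soundness being largely a matter of verifying the arithmetical principles and completeness being the substantive half. For soundness, I would check that each axiom of \ilm\ is provable in $T$ under the $\rhd_{\Pi_1}$-interpretation: the base interpretability-logic axioms (\textbf{J1}--\textbf{J5}) plus the Montagna principle \textbf{M}, $A\rhd B \to (A\wedge\Box C)\rhd(B\wedge\Box C)$. The point of hypothesizing $\textsf{I}\Pi_1^-$ is precisely that this fragment suffices to formalize the relevant closure properties of $\Pi_1$-conservativity — in particular, that $\Pi_1$-conservativity is transitive, respects boolean combinations in the appropriate way, and validates \textbf{M} because adding a $\Box C$ to the conserving side does not disturb $\Pi_1$-conservativity (a $\Sigma_1$ fact is preserved). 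I would cite the known ALM/\ilm-soundness arguments for essentially reflexive theories (Theorem \ref{theo:shav}) and observe that for the $\rhd_{\Pi_1}$ reading the reflexivity hypothesis is not needed, only enough induction to carry out the formalization.

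For completeness, I would invoke the modal completeness of \ilm\ with respect to its standard (Veltman/ordinary) frame semantics, take a finite \ilm-countermodel $\mathcal{M}$ for a non-theorem $A$, and build an arithmetical realization $*$ with the $\rhd$ modality read as $\rhd_{\Pi_1}$ such that $T\nvdash A^*$. This is the analogue of Solovay's construction adapted to the conservativity setting: one defines a primitive recursive "Solovay function" chasing the countermodel, uses limit statements $\lambda_w$ for the realization of propositional variables, and then must verify that $w R v$ in $\mathcal{M}$ translates into a genuine $\Pi_1$-conservativity relation $\lambda_w \rhd_{\Pi_1} \lambda_v$ provably in $T$, while non-accessible worlds give non-conservativity (which is where soundness of $T$ — not merely $\Sigma_1$-soundness — is used, to pull the failure back to the standard model). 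The $\textsf{I}\Pi_1^-$ hypothesis again enters to make the relevant $\Pi_1$-conservativity facts $T$-provable rather than merely true.

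The main obstacle I expect is the completeness direction, specifically matching the \emph{Montagna/$M$-type} condition on the frame to an honest arithmetical $\Pi_1$-conservativity statement. In the usual \ilm-completeness proof for essentially reflexive $T$, the verification that the Solovay-style realization respects the $\rhd$-relation uses the Orey--H\'ajek characterization (Theorem \ref{theorem:OH}) to reduce interpretability to $\Pi_1$-conservativity; here we are working with $\rhd_{\Pi_1}$ directly, so that reduction step is free, but one still needs the fine control over which $\Pi_1$-sentences are provable from the limit statements, and one must check this goes through over the weak base $\textsf{I}\Pi_1^-$ rather than over a reflexive theory. I would structure the argument so that the only properties of $T$ invoked are (a) soundness, to transfer non-provability of $A^*$ from the standard model, and (b) $\textsf{I}\Pi_1^-$-strength, to formalize the positive conservativity assertions; everything else is the modal completeness of \ilm\ together with the standard Solovay machinery, which I would cite rather than reproduce.
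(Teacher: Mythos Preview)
The paper does not actually prove this theorem: it is stated as a known result and attributed to H\'ajek--Montagna \cite{HM90,HM92} (for $T\supseteq\isig{1}$) and to Beklemishev--Visser \cite{BeklVisser2005} (for the sharper bound $T\supseteq\textsf{I}\Pi_1^-$), with no argument supplied beyond those citations. The only further comment the paper makes is Remark~\ref{remark:Sigma2Subs}, which notes that the proof is ``rather similar to that of Solovay's original proof'' and that the realizations in the completeness half can be taken to be $\Sigma_2$ (indeed $\Delta_2(\textsf{I}\Pi_1^-)$).

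Your plan is therefore not comparable to a proof in the paper, but it is a faithful outline of what the cited references do: arithmetical soundness of the \ilm\ axioms for $\rhd_{\Pi_1}$ over a weak base, and a Solovay-style embedding of a finite \ilm-countermodel for completeness. Two small corrections to your sketch. First, your gloss on why \principle{M} is sound for $\rhd_{\Pi_1}$ (``adding a $\Box C$ \ldots\ a $\Sigma_1$ fact is preserved'') is too quick; the actual verification uses provable $\Sigma_1$-completeness together with the observation that $\Box C\to\pi$ is (provably equivalent to) a $\Pi_1$ sentence, so that $\Pi_1$-conservativity transfers it. Second, the hard part of the cited proofs is not the Orey--H\'ajek reduction (as you note, that step disappears when one works with $\rhd_{\Pi_1}$ directly) but rather constructing the Solovay function and verifying its properties over the weak base $\textsf{I}\Pi_1^-$; this is exactly the content of \cite{BeklVisser2005}, and you are right to flag it as the point where the $\textsf{I}\Pi_1^-$ hypothesis does real work.
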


The theorem was first proven by H\'ajek and Montagna in \cite{HM90} and \cite{HM92} to hold for any sound theory containing $\textsf{I}\Sigma_1$. Beklemishev and Visser in \cite{BeklVisser2005} lowered the threshold to the rather weak theory $\textsf{I}\Pi_1^-$ that allows only induction for parameter free formulas of complexity $\Pi_1$. It is well known that \pra extends $\textsf{I}\Pi_1^-$ (\cite{Bek:ReflArith}). 

\begin{remark}\label{remark:Sigma2Subs}
The proof of Theorem \ref{theorem:PiILM} is rather similar to that of Solovay's original proof and again (see Theorem \ref{theorem:BoolSigmaPL}), the substitutions in the completeness proof can be taken\footnote{Albert Visser (p.c.) notes that close inspection of the proof actually reveals that the substitutions can be taken to be $\Delta_2(\textsf{I}\Pi_1^-)$. That is, a $\Sigma_2$ sentences that is probably in $\textsf{I}\Pi_1^-$ equivalent to a $\Pi_2$ sentence.} to be $\Sigma_2$.
\end{remark}

The logics \ilm and \ilp have elegant syntactical presentations. We shall define them in parts. First, we define a logic \il that is present to all interpretability logics studied. Next this logic \il is extended by adding more axiom schemata.

(When we write formulas in \formil we adhere to the following binding conventions. We say that $\rhd$ binds stronger than $\to$ but weaker than all other connectives. Using this convention we can save a lot of brackets.)

\begin{definition}\label{defi:il}
The logic \il is the smallest set of formulas being closed under
the rules of Necessitation and of Modus Ponens, that contains
all tautological formulas and all instantiations of the following
axiom schemata.

\begin{enumerate}
\item[${\sf L1}$]\label{ilax:l1}
        $\Box(A\rightarrow B)\rightarrow(\Box A\rightarrow\Box B)$
\item[${\sf L2}$]\label{ilax:l2}
        $\Box A\rightarrow \Box\Box A$
\item[${\sf L3}$]\label{ilax:l3}
        $\Box(\Box A\rightarrow A)\rightarrow\Box A$
\item[${\sf J1}$]\label{ilax:j1}
        $\Box(A\rightarrow B)\rightarrow A\rhd B$
\item[${\sf J2}$]\label{ilax:j2}
        $(A\rhd B)\wedge (B\rhd C)\rightarrow A\rhd C$
\item[${\sf J3}$]\label{ilax:j3}
        $(A\rhd C)\wedge (B\rhd C)\rightarrow A\vee B\rhd C$
\item[${\sf J4}$]\label{ilax:j4}
        $A\rhd B\rightarrow(\Diamond A\rightarrow \Diamond B)$
\item[${\sf J5}$]\label{ilax:j5}
        $\Diamond A\rhd A$
\end{enumerate}
\end{definition}

Apart from the axiom schemata enumerated in Definition \ref{defi:il} we will
need consider other axiom schemata too.

\begin{enumerate}
\item[${\sf M}$] 
$A \rhd B \rightarrow A \wedge \Box C \rhd B \wedge \Box C$

\item[${\sf P}$]
$A \rhd B \rightarrow \Box (A \rhd B)$


\item[${\sf W}$]
$A \rhd B \rightarrow A \rhd B \wedge \Box \neg A$
%
%


\end{enumerate}

If $\sf X$ is a set of axiom schemata we will denote by \extil{X} the
logic that arises by adding the axiom schemata in $\sf X$ to \il.

\subsection{The closed fragment}
Because closed formulas in \ilw can be reduced to those of \gl (\cite{haje:norm91}) we can prove that $\restintl{\pra}{\mathcal{B}}$ is again the logic of linear frames.

\begin{definition} 
The logic $\mbox{\bf ILW.3}$ is obtained by adding 
the linearity axiom schema $\Box (\Box A \rightarrow B) \vee
\Box (\boxdot B \rightarrow A)$ to \ilw. 
\end{definition}

\begin{theorem}\label{ril}
$\restintl{\pra}{\mathcal{B}}=\mbox{\bf ILW.3} $
\end{theorem}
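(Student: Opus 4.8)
The plan is to mimic the proof strategy already used for Theorem~\ref{theorem:closedFragment}, namely to invoke the machinery of Theorem~\ref{theorem:mainTheorem}: I want to produce a model $\mathcal{M}$ based on the linear frame $\langle\omega,>\rangle$ such that, for every closed formula $A\in\mathcal{B}$ (where now $\Box$ is read as provability in \pra\ and $\rhd$ as formalized interpretability over \pra), we have $\pra\vdash A \Leftrightarrow \mathcal{M}\vDash A$, and such that each point is definable and the model is image-finite. Since $\langle\omega,>\rangle$ is image-finite and point $n$ is defined by $\Diamond^n\top\wedge\Box^{n+1}\bot$ exactly as before, the only real work is establishing Condition~\eqref{equation:FroedelCondition} and identifying the frame's logic with $\mathbf{ILW.3}$.

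First I would reduce the closed fragment of interpretability logic to the closed fragment of \gl. The key input is the cited result of H\'ajek (\cite{haje:norm91}): closed formulas in \ilw\ are provably equivalent (in \ilw, hence arithmetically over \pra\ by Fact~\ref{fact:IlwInILall} and soundness) to closed formulas of \gl. Concretely, one shows by induction on the build-up of $\mathcal{B}$ that every $A\in\mathcal{B}$ is \ilw-equivalent to a formula $A'$ in the $\Box$-only language; the $\rhd$-clauses are handled using the $\mathsf{J}$-axioms together with $\mathsf{W}$ to collapse $A\rhd B$ to a Boolean combination of consistency statements (essentially $A\rhd B$ reduces, in the closed fragment, to a statement about ranks / iterated consistency, just as reflection principles are linearly ordered). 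Having done this, for $A\in\mathcal{B}$ with \gl-reduct $A'$ we get $\pra\vdash A \Leftrightarrow \pra\vdash A'$, and by Theorem~\ref{theorem:closedFragment}'s proof $\pra\vdash A' \Leftrightarrow \langle\omega,>\rangle\vDash A'$; since the reduction is valid on $\langle\omega,>\rangle$ as well (interpret $\rhd$ semantically on the linear frame, matching the \ilw\ reduction), we obtain $\pra\vdash A \Leftrightarrow \langle\omega,>\rangle\vDash A$, which is exactly \eqref{equation:FroedelCondition}.

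Second, I need to identify $\mathcal{L}(\langle\omega,>\rangle)$ in the interpretability language: I must show the set of \formil-formulas (with variables) valid on the linear frame $\langle\omega,>\rangle$, with $\rhd$ given its standard frame semantics, is axiomatized by $\mathbf{ILW.3}$. Soundness of $\mathbf{ILW.3}$ on $\langle\omega,>\rangle$ is a routine frame-condition check: the \il-axioms and $\mathsf{W}$ hold on all finite transitive irreflexive frames with the usual Veltman/Visser semantics, and the linearity axiom $\Box(\Box A\to B)\vee\Box(\boxdot B\to A)$ holds on linear orders just as in the \gl\ case. Completeness — every non-theorem of $\mathbf{ILW.3}$ is refuted on $\langle\omega,>\rangle$ — is the part I expect to be the main obstacle: it requires a Solovay-style or canonical-model-plus-filtration argument for an interpretability logic, showing that a linear Veltman model suffices whenever $\mathbf{ILW.3}\nvdash A$. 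The cleanest route is probably to take a finite \ilw-Veltman countermodel (using known completeness of \ilw), then exploit the linearity axiom to linearize it — collapsing the $R$-structure to a chain while adjusting the $S_x$-relations — and check that validity of the relevant formula is preserved, much as one p-morphs arbitrary \gl-frames onto $\langle\omega,>\rangle$ for \gl.3.

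Finally, with \eqref{equation:FroedelCondition} in hand, with $\langle\omega,>\rangle$ image-finite, and with point $n$ definable by $\Diamond^n\top\wedge\Box^{n+1}\bot\in\mathcal{B}$, Theorem~\ref{theorem:mainTheorem} applies verbatim (the proof of that theorem only used the Boolean/$\Box$-structure of the definability formulas and never the absence of $\rhd$), yielding $\restintl{\pra}{\mathcal{B}}=\mathcal{L}(\langle\omega,>\rangle)=\mathbf{ILW.3}$. I should double-check one subtlety: Theorem~\ref{theorem:mainTheorem} as stated is about \textbf{PL}, so I either restate it for the interpretability language or observe that its proof is insensitive to the extra modality — the inductive clause for $\rhd$ in the two inductions ($\Leftarrow$ and $\Rightarrow$) goes through exactly as for $\Box$ since the realization $p^*:=\bigvee_{x\in V(p)\cap i\uparrow}D_x$ still lands in $\mathcal{B}$ and $\mathcal{M}$ still satisfies \eqref{equation:FroedelCondition} on all of $i\uparrow$.
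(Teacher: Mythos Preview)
Your plan is coherent and would work, but it takes a genuinely different route from the paper, and the step you yourself flag as ``the main obstacle'' is precisely the one the paper avoids. The paper does \emph{not} invoke Theorem~\ref{theorem:mainTheorem} or any Veltman semantics for this result. Instead it defines a syntactic translation $(\cdot)^{\sf tr}$ from \formil\ to \formgl\ by setting $(A\rhd B)^{\sf tr}:=\Box(A^{\sf tr}\to B^{\sf tr}\vee\Diamond B^{\sf tr})$ and proves two things: $(**)$ $\mathbf{ILW.3}\vdash\varphi\leftrightarrow\varphi^{\sf tr}$, using the linearity axiom in an essential way to show that $\rhd$ is outright \emph{definable} from $\Box$ in $\mathbf{ILW.3}$; and $(*)$ $\mathbf{ILW.3}\vdash\varphi\Leftrightarrow\mathbf{GL.3}\vdash\varphi^{\sf tr}$. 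Combined with arithmetical soundness of $\mathbf{ILW.3}$ over \pra\ (via Fact~\ref{fact:IlwInILall} plus the H\'ajek--\v{S}vejdar normal form for closed \ilw-formulas) and Theorem~\ref{theorem:closedFragment}, this yields the chain $\forall\,{*}\in\mathcal{B}\ \pra\vdash\varphi^*\Leftrightarrow\forall\,{*}\in\mathcal{B}\ \pra\vdash(\varphi^{\sf tr})^*\Leftrightarrow\mathbf{GL.3}\vdash\varphi^{\sf tr}\Leftrightarrow\mathbf{ILW.3}\vdash\varphi$ directly.

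What this buys: the paper never needs a modal completeness theorem for $\mathbf{ILW.3}$ with respect to any class of Veltman frames. Your route, by contrast, requires exactly that --- showing that $\mathbf{ILW.3}$ is the logic of the linear Veltman frame on $\langle\omega,>\rangle$ --- and your sketch (take an \ilw-countermodel and ``linearize'' it) is where the real work would lie; linearizing the $R$-relation while coherently adjusting the $S_w$-relations and preserving refutation of $A$ is not routine. Ironically, the easiest way to discharge your obstacle is the paper's key lemma $(**)$: once $\rhd$ is $\Box$-definable in $\mathbf{ILW.3}$, completeness for the linear frame (with $uS_wv$ iff $u=v$ or $uRv$) is an immediate corollary of $\mathbf{GL.3}$-completeness rather than something to be proved from scratch. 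So your approach is not wrong, and it would additionally yield a semantic characterization of $\mathbf{ILW.3}$ as a byproduct, but it is longer and leaves unproved the one step that the paper's translation dissolves.
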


\begin{proof}
We give a translation from formulas $\varphi$ 
in \formil
to formulas $\varphi^{\sf tr}$ in 
\formgl
such that 
\[
\begin{array}{cl}
\mbox{\bf ILW.3}\vdash \varphi \Leftrightarrow \mbox{\bf GL.3} \vdash 
\varphi^{\sf tr} & (*)\\ 
\mbox{ and } & \ \\
\mbox{\bf ILW.3} \vdash \varphi \leftrightarrow \varphi^{\sf tr}. & (**) 
\end{array}
\]
If we moreover know $(*{*}*): \  \  \mbox{\bf ILW.3} \vdash \varphi \Rightarrow 
\forall \, *{\in} \mathcal{B}\; \pra \vdash \varphi^*$
we would be done. For then we have by $(**)$ and $(*{*}*)$ that 
\[
\begin{array}{ll}
\forall \, * {\in} {\sf Sub}(\mathcal{B})\; \pra \vdash
\varphi^* \leftrightarrow (\varphi^{\sf tr})^*
\end{array}
\]
\begin{center}
and consequently
\end{center}
\[
\begin{array}{ll}
\forall \, * {\in} \mathcal{B}\; \pra \vdash \varphi^* & \Leftrightarrow \\
\forall \, * {\in} \mathcal{B}\; \pra \vdash {(\varphi^{\sf tr})}^* &
\Leftrightarrow \\
\mbox{\bf GL.3} \vdash \varphi^{\sf tr} & \Leftrightarrow \\ \mbox{\bf ILW.3}\vdash \varphi.
\end{array}
\]

We first see that $(*{*}*)$ holds. 
Certainly, by Fact \ref{fact:IlwInILall}, we have that 
$\ilw \subseteq \restintl{\pra}{\mathcal{B}}$. Thus it remains to show that 
$\pra \vdash \Box (\Box A^* \rightarrow B^*) \vee \Box (\boxdot B^* \rightarrow A^*)$ 
for any formulas $A$ and $B$ in \formil and any $*{\in}\mathcal{B}$.
As any formula in the closed fragment of \ilw is 
equivalent to a formula in the closed fragment of \gl (see \cite{haje:norm91}), Theorem 
\ref{theorem:closedFragment} gives us that indeed the linearity axiom holds for the 
closed fragment of \gl.

Our translation will be the identity translation except for $\rhd$. In that
case we define
\[
(A\rhd B)^{\sf tr} := \Box (A^{\sf tr} \rightarrow (B^{\sf tr}\vee 
\Diamond B^{\sf tr})).
\]

We first see that we have $(**)$. It is sufficient to show that
$\mbox{\bf ILW.3}\vdash p\rhd q \rightarrow \Box (p \rightarrow (q\vee \Diamond q))$.
We reason in $\mbox{\bf ILW.3}$.
An instantiation of the linearity axiom gives us 
$\Box (\Box \neg q \rightarrow (\neg p \vee q)) \vee 
\Box ((\neg p \vee q) \wedge \Box (\neg p \vee q)\rightarrow \neg q)$. The
first disjunct 
immediately yields
$ \Box (p \rightarrow (q\vee \Diamond q))$.

In case of the second 
disjunct we get by propositional logic
$\Box (q \rightarrow \Diamond (p\wedge \neg q))$ and thus 
also 
$\Box (q \rightarrow \Diamond p )$. Now we assume $p\rhd q$. By ${\sf W}$ we 
get $p\rhd q\wedge \Box \neg p$. Together 
with $\Box (q \rightarrow \Diamond p)$, this gives us
$p\rhd \bot$, that is $\Box \neg p$. Consequently we have
$ \Box (p \rightarrow (q\vee \Diamond q))$.

We now prove $(*)$.
By induction on $\mbox{\bf ILW.3}\vdash \varphi$ we see that 
$\mbox{\bf GL.3} \vdash \varphi^{\sf tr}$. All the specific interpretability 
axioms turn out to be provable under our translation in $\gl$.
The only axioms where the $\Box A \rightarrow \Box \Box A$ axiom scheme 
is really used is in ${\sf J_2}$ and  ${\sf J_4}$. To prove the 
translation of  ${\sf W}$ we also need ${\sf L_3}$.

If $\mbox{\bf GL.3} \vdash \varphi^{\sf tr}$ then certainly 
$\mbox{\bf ILW.3} \vdash \varphi^{\sf tr}$ and by $(**)$, 
$\mbox{\bf ILW.3} \vdash \varphi$.
\end{proof}

We thus see that $\mbox{\bf ILW.3}$ is an upperbound for $\intl{\pra}$. Using the
translation from the proof of Theorem \ref{ril}, it is not hard to see that both the
principles ${\sf P}$ and ${\sf M}$ are provable in $\mbox{\bf ILW.3}$. This tells us
that the upperbound is actually not very informative as we know that 
$\intl{\pra}\nvdash \principle{M}$.
By a straight-forward generalization of Lemma \ref{lemma:upperboundmethod} we see that choosing larger
$\Gamma$ will generally yield a smaller $\restintl{\pra}{\Gamma}$ and thus 
a sharper upperbound. Subsection \ref{subsection:refutingM} consists of reflections on just how large the $\Gamma$ should be as to refute \principle{M} in $\restintl{\pra}{\Gamma}$. First we shall include some observations on a fragment slightly larger than the closed fragment.

\subsection{The closed fragment with a constant for \isig{1}}

If we consider the proof of Theorem \ref{theorem:mainTheorem}, we see that it does not make any assumptions on the signature of the modal logic under considerations. In particular, the theorem still holds for interpretability logics. In the theorem below we use this to give a semantic characterization of $\restintl{\pra}{\mathcal{F}_1}$.

In \cite{Joo05} it is established that for a certain frame, that we will denote here by $\widetilde{\mathfrak{G}_1^{\bullet}}$, we have the following equivalence.
\[
\forall A \in \mathcal{F}_1\ [\ \widetilde{\mathfrak{G}_1^{\bullet}}\models A \ \ \Leftrightarrow \ \  \pra \vdash A\ ] \ \ \ \ \ \ (\dag)
\]
For the purpose of this paper it is not material to know what exactly the frame $\widetilde{\mathfrak{G}_1^{\bullet}}$ looks like and we shall refrain from giving a formal definition. It is only important to know that $\widetilde{\mathfrak{G}_1^{\bullet}}$ is just $\mathfrak{G}_1^{\bullet}$ with some additional accessibility relations to model the $\rhd$ modality. This, together with the mere equivalence $(\dag)$ suffices to obtain the following theorem.

\begin{theorem}
$\restintl{\pra}{\mathcal{F}_1} = \mathcal{L}(\widetilde{\mathfrak{G}_1^{\bullet}})$
\end{theorem}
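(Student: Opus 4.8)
The plan is to mimic the proof of Theorem~\ref{theorem:closedFragment} (and Corollary~\ref{corollary:logicofq}), using the general machinery of Theorem~\ref{theorem:mainTheorem}, which, as already noted, applies verbatim to the interpretability language since its proof is insensitive to the signature of the modal operators. Concretely, I would take $T:=\pra$, the fragment $\mathcal{F}:=\mathcal{F}_1$ in the interpretability language (with the single constant $s_1$ realized by $\sigma$, the finite axiomatization of \isig{1}), and the model $\mathcal{M}:=\widetilde{\mathfrak{G}_1^{\bullet}}$. The goal is to verify the two hypotheses of Theorem~\ref{theorem:mainTheorem}: first, the ``Froedel condition'' \eqref{equation:FroedelCondition}, i.e.\ $\pra\vdash A \Leftrightarrow \widetilde{\mathfrak{G}_1^{\bullet}}\models A$ for all $A\in\mathcal{F}_1$; and second, that $\widetilde{\mathfrak{G}_1^{\bullet}}$ is image-finite and that each of its points is uniquely definable by a formula in $\mathcal{F}_1$.

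The first hypothesis is exactly the equivalence $(\dag)$ imported from \cite{Joo05}, so no further work is needed there. For the side conditions, I would exploit the stated fact that $\widetilde{\mathfrak{G}_1^{\bullet}}$ is just $\mathfrak{G}_1^{\bullet}$ with extra accessibility relations added to interpret $\rhd$. Image-finiteness with respect to the $\rhd$-relations needs a brief check, but the underlying point set is the same $G_1=\{\langle m,i\rangle: m\in\omega, i<2\}$ and the relevant relations are bounded; more importantly, for the definability clause the key observation is that the $\rhd$-modality adds \emph{no new definable distinctions} among points — the definitions used in Corollary~\ref{corollary:logicofq} for $\mathfrak{G}_1^{\bullet}$ already separate all points, and those definitions live in $\mathcal{F}_1\subseteq\formil$ (the constant $s_1$ distinguishes the two points at each level, and a $\Box^{\alpha}\bot$-style formula pins down the level). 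Hence each point of $\widetilde{\mathfrak{G}_1^{\bullet}}$ remains uniquely definable by a formula $D_x\in\mathcal{F}_1$.

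With both hypotheses of Theorem~\ref{theorem:mainTheorem} in place, the theorem yields directly $\restintl{\pra}{\mathcal{F}_1}=\mathcal{L}(\widetilde{\mathfrak{G}_1^{\bullet}})$, where on the right the constant $s_1$ is treated as an ordinary propositional variable, as stipulated in Section~\ref{fragmentslogics}. I expect the only genuinely delicate point to be confirming image-finiteness and the definability of points once the $\rhd$-accessibility relations are present — that is, making sure that the added relations in $\widetilde{\mathfrak{G}_1^{\bullet}}$ do not ruin the finite-branching property nor require constants outside $\mathcal{F}_1$ to separate points. Both should follow immediately from the description of $\widetilde{\mathfrak{G}_1^{\bullet}}$ as a conservative enrichment of $\mathfrak{G}_1^{\bullet}$, and in the worst case one can cite \cite{Joo05} for the explicit frame data; everything else is a routine instantiation of the general method already developed in the earlier sections.
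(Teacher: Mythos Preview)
Your proposal is correct and follows essentially the same approach as the paper: invoke $(\dag)$ from \cite{Joo05} for the Froedel condition, note that image-finiteness and definability of points carry over from $\mathfrak{G}_1^{\bullet}$ because the interpretability model only adds accessibility relations on the same underlying set, and then apply Theorem~\ref{theorem:mainTheorem} (whose proof is signature-agnostic). The paper's own proof is just a two-line compression of exactly this argument.
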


\begin{proof}
Image-finiteness and definability of separate points is clear as interpretability logic is an extension of provability logic. Thus, by Theorem \ref{theorem:mainTheorem} we obtain the result.
\end{proof}

In \cite{Joo05}, also a logic \textbf{PIL} is given such that we actually have
\[
\forall A \in \mathcal{F}_1\ [\ \widetilde{\mathfrak{G}_1^{\bullet}}\models A \ \ \Leftrightarrow \ \  \pra \vdash A\ \ \Leftrightarrow \ \ \mathbf{PIL}\vdash A\ ] .\ \ \ \
\]
This suggests that the following conjecture should not be too hard to prove. In this conjecture, \textbf{ILM.4} denotes the logic that arises by joining \ilm and \textbf{GL.4}.
\begin{conjecture}
$\mathcal{L}(\widetilde{\mathfrak{G}_1^{\bullet}})= \ilm\textbf{.4}$ 
\end{conjecture}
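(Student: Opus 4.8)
The plan is to lift, one modality up, the route used for $\mathbf{GL.4}$ in Section~\ref{qgl}: first establish soundness and completeness of $\mathbf{ILM.4}$ with respect to a suitable class of \emph{finite} interpretability frames, and then show that every frame in that class is a p-morphic image of a generated subframe of $\widetilde{\mathfrak{G}_1^{\bullet}}$, so that a refutation on a finite frame transfers back. The inclusion $\mathbf{ILM.4}\subseteq\mathcal{L}(\widetilde{\mathfrak{G}_1^{\bullet}})$ (soundness) should be routine: by construction $\widetilde{\mathfrak{G}_1^{\bullet}}$ is an $\mathbf{ILM}$-Veltman frame --- its $\rhd$-relations were added precisely so as to validate \il and the principle $\mathbf{M}$ --- while its $\Box$-reduct is $\mathfrak{G}_1$, which was shown in Section~\ref{qgl} to satisfy the frame conditions \textit{C2} and \textit{C3}; since the extra axioms \textit{Q1} and \textit{Q2} of $\mathbf{GL.4}$ are purely $\Box$-modal, their validity is inherited from $\mathfrak{G}_1$ via Theorem~\ref{qglcom}, and all other axioms and rules are those of $\mathbf{ILM}$.

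For completeness I would first prove that $\mathbf{ILM.4}$ is complete with respect to the class of finite $\mathbf{ILM}$-Veltman frames whose $\Box$-reduct lies in $\mathcal{C}$. Starting from the known completeness of $\mathbf{ILM}$ for finite $\mathbf{ILM}$-Veltman frames, one adapts the canonical-model-plus-filtration argument of Theorem~\ref{qglcom}: axiom \textit{Q1} is canonical for non-triple-branching and \textit{Q2} for strong confluence, so \textit{C2} and \textit{C3} already hold on the canonical model of $\mathbf{ILM.4}$; filtrating through the finite set of subformulas of a fixed non-theorem $A$ --- now also collapsing the $\rhd$-relations in the standard interpretability-logic manner --- yields a finite model, and one must verify that it remains an $\mathbf{ILM}$-Veltman frame and still satisfies \textit{C2} and \textit{C3}. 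I expect this to be the main obstacle: simultaneously honouring the finite filtration, the frame condition attached to $\mathbf{M}$, and \textit{C2}/\textit{C3} is delicate, and it is likely to require a carefully engineered filtration together with a careful choice of the $\rhd$-relations on the quotient.

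Finally I would prove the interpretability analogue of Proposition~\ref{pmorph}: every finite $\mathbf{ILM}$-Veltman frame with $\Box$-reduct in $\mathcal{C}$ is a p-morphic image --- preserving and reflecting both $R$ and the $\rhd$-relations --- of a generated subframe of $\widetilde{\mathfrak{G}_1^{\bullet}}$. For the $R$-skeleton this is exactly Proposition~\ref{pmorph}; the new content is that the $\rhd$-relations of $\widetilde{\mathfrak{G}_1^{\bullet}}$, read off from the construction in \cite{Joo05}, are generous enough that the inductive extension used there continues to satisfy the back condition for $\rhd$ --- it may help to first observe that $\widetilde{\mathfrak{G}_1^{\bullet}}$ can be taken with the largest $\rhd$-relations compatible with its being an $\mathbf{ILM}$-Veltman frame without changing $\mathcal{L}(\widetilde{\mathfrak{G}_1^{\bullet}})$. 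Combining the pieces: if $\mathbf{ILM.4}\nvdash A$ then $A$ fails on some finite $\mathbf{ILM}$-Veltman frame in $\mathcal{C}$, hence --- pulling the valuation back along the p-morphism --- on $\widetilde{\mathfrak{G}_1^{\bullet}}$, so $A\notin\mathcal{L}(\widetilde{\mathfrak{G}_1^{\bullet}})$; with soundness this yields the conjecture. A translation-based shortcut in the style of the proof of Theorem~\ref{ril} is unlikely to help, since $\mathbf{ILM}$ does not reduce to $\mathbf{GL}$ outside the closed fragment.
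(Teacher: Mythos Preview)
There is nothing to compare your proposal against: in the paper this statement is explicitly a \emph{Conjecture}, not a theorem, and no proof is given. Immediately after stating it, the authors only remark that the inclusion $\mathcal{L}(\widetilde{\mathfrak{G}_1^{\bullet}})\supseteq \mathbf{ILM.4}$ ``is actually very easy and follows from a direct verification of the validity of the axioms on $\widetilde{\mathfrak{G}_1^{\bullet}}$'', and that ``the other direction is harder but not too interesting''. That is the entirety of what the paper says; the completeness direction is left open.

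Your soundness paragraph matches the paper's one-line observation. Your completeness strategy---transporting the canonical-model-plus-filtration argument of Theorem~\ref{qglcom} to the Veltman setting and then proving an interpretability analogue of Proposition~\ref{pmorph}---is a natural plan and is indeed what the paper's phrase ``should not be too hard to prove'' seems to anticipate. But you should be aware that what you have written is a plan, not a proof: the step you yourself flag as ``the main obstacle'' (building a finite filtration that simultaneously respects the $\mathbf{M}$-frame condition and the properties \textit{C2}, \textit{C3}) is genuinely nontrivial, since standard $\mathbf{ILM}$-filtrations already require care and adding further first-order frame conditions on the $R$-reduct can interact badly with the truncation of the $S_w$-relations. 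Likewise, the back condition for $\rhd$ in the p-morphism step depends on the precise definition of the $S_w$-relations in $\widetilde{\mathfrak{G}_1^{\bullet}}$ from \cite{Joo05}, which you would have to inspect rather than assume to be ``generous enough''. None of this is a reason to think the approach fails, but neither the paper nor your proposal actually carries it out.
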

The inclusion $\mathcal{L}(\widetilde{\mathfrak{G}_1^{\bullet}})\supseteq \ilm\textbf{.4}$ is actually very easy and follows from a direct verification of the validity of the axioms on $\widetilde{\mathfrak{G}_1^{\bullet}}$. The other direction is harder but not too interesting as we still have $\principle{M} \in \restintl{\pra}{\mathcal{F}_1}$.

\subsection{Fragments for refuting \principle{M} in $\restintl{\pra}{\Gamma}$}\label{subsection:refutingM}

In \cite{Visser97} it is shown that 
$\intl{\pra}\nvdash A\rhd \Diamond B \rightarrow \Box (A\rhd \Diamond B)$. It is easy to see that $\ilm \vdash  A\rhd \Diamond B \rightarrow \Box (A\rhd \Diamond B)$. This 
implies that \principle{M} is certainly not derivable in \intl{\pra}. We can also find explicit 
realizations that violate \principle{M}, as the following lemma tells us.

\begin{lemma}\label{lemm:vetgeenM}
For $n\geq 1$, we have that $\intl{\ir{n}}\nvdash \principle{M}$.
\end{lemma}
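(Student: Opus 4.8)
\emph{Proof plan.} For $n=1$ this is the result of \cite{Visser97} quoted just above, since $\ir{1}=\pra$; the general case will follow by the same method, and I first reduce the statement to the non-derivability in $\intl{\ir{n}}$ of the single schema $A\rhd\Diamond B\to\Box(A\rhd\Diamond B)$. Since $\ir{n}$ extends $\textsf{I}\Delta_0+\exp$, the base logic $\il$ is arithmetically sound for $\ir{n}$, so $\il\subseteq\intl{\ir{n}}$; hence if $\principle{M}$ were in $\intl{\ir{n}}$ we would get $\ilm=\il+\principle{M}\subseteq\intl{\ir{n}}$, and since $\ilm\vdash A\rhd\Diamond B\to\Box(A\rhd\Diamond B)$ (as noted just before the lemma) the schema itself would land in $\intl{\ir{n}}$. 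Thus it suffices to exhibit a realization $*$ and formulas $A,B$ with $\ir{n}\nvdash(A\rhd\Diamond B\to\Box(A\rhd\Diamond B))^*$; and for this it is enough to arrange that $(A\rhd\Diamond B)^*$ holds in $\mathbb{N}$ while $\ir{n}\nvdash(A\rhd\Diamond B)^*$, for then the consequent $(\Box(A\rhd\Diamond B))^*$ is false in $\mathbb{N}$, the implication fails in $\mathbb{N}$, and the sound theory $\ir{n}$ cannot prove it.

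The realization is obtained by lifting Visser's $\pra$-construction, the point being that $\ir{n}$ sits strictly between the finitely axiomatized and the essentially reflexive theories. By Theorem \ref{theorem:ReflexiveExtensionsOfRules}, $\ir{n}$ is reflexive and, moreover, every extension of $\ir{n}$ by a $\Sigma_{n+1}$ sentence is again reflexive; but $\ir{n}$ is not essentially reflexive, and the bound $\Sigma_{n+1}$ is sharp, so one can fix a true sentence $\theta_n$ of complexity $\Pi_{n+1}$ (for instance a uniform $\Sigma_n$-reflection principle for $\ir{n}$, or an iterated consistency statement of the matching complexity; for $n=1$ this is Visser's $\Pi_2$ sentence over $\pra$) with $\ir{n}+\theta_n$ not reflexive. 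One now realizes $A$ so that $A^*$ carries $\theta_n$ --- making $\ir{n}+A^*$ non-reflexive --- and chooses $B$ so that $\Diamond B^*$, which is a $\Pi_1$ sentence and hence keeps $\ir{n}+\Diamond B^*$ reflexive by Theorem \ref{theorem:ReflexiveExtensionsOfRules}, is interpretable in $\ir{n}+A^*$ via a definable cut. By the Orey--H\'ajek theorem (Theorem \ref{theorem:OH}) interpretability into the reflexive theory $\ir{n}+\Diamond B^*$ is just $\Pi_1$-conservativity, and one sets things up so that $\ir{n}+A^*$ is $\Pi_1$-conservative over $\ir{n}+\Diamond B^*$ in the real world --- whence $(A\rhd\Diamond B)^*$ is true --- yet $\ir{n}$ cannot verify this, because (via the formalized Orey--H\'ajek equivalence, provable in \ea) a derivation of $(A\rhd\Diamond B)^*$ in $\ir{n}$ would hand us a consistency assertion about the non-reflexive extension $\ir{n}+\theta_n$ that G\"odel's second theorem denies to $\ir{n}$. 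Alternatively, and more in the spirit of explicit realizations, one can refute an instance of $\principle{M}$ itself: taking $C$ with $C^*$ the sentence $0=1$, the instance $A\rhd B\to A\wedge\Box C\rhd B\wedge\Box C$ realizes to $A^*\rhd B^*\to(A^*\wedge\neg\mathrm{Con}(\ir{n}))\rhd(B^*\wedge\neg\mathrm{Con}(\ir{n}))$, and failure of the consequent for a suitably wild $A^*$ is again forced by the non-reflexivity of $\ir{n}+A^*$, which leaves the cuts available to any interpretation too short to transport the $\Sigma_1$ sentence across.

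The hard part is precisely this last step: checking that Visser's $\pra$-level argument really does lift to level $n$. Concretely this means (i) pinning down the correct true sentence $\theta_n$ just beyond the $\Sigma_{n+1}$ threshold with $\ir{n}+\theta_n$ non-reflexive --- which is exactly where the sharpness of Theorem \ref{theorem:ReflexiveExtensionsOfRules} is used, together with its positive content to keep $\ir{n}+\Diamond B^*$ reflexive --- and (ii) formalizing inside $\ir{n}$ both the cut-interpretation witnessing $(A\rhd\Diamond B)^*$ and the matching unprovability estimate, i.e. verifying that the appropriate direction of the Orey--H\'ajek machinery is in force at each point where it is invoked. Everything else is bookkeeping, and the base case $n=1$ is \cite{Visser97}.
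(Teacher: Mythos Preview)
Your proposal is not a proof but a plan, and you yourself flag that the ``hard part'' --- actually specifying $\theta_n$, the realization, and verifying that the Visser construction lifts through the Orey--H\'ajek machinery at level $n$ --- remains undone. As written there is no concrete $A^*,B^*$ (or $p^*,q^*,r^*$) on the table, and the sentences ``one sets things up so that\ldots'' and ``failure of the consequent \ldots\ is again forced by the non-reflexivity'' are promissory notes, not arguments. So there is a genuine gap: the construction is never carried out.

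More importantly, the paper's route is far shorter and avoids all of this. Rather than redoing Visser's analysis at level $n$, the paper simply observes that $\isig{n}$ is finitely axiomatized, so by Theorem~\ref{thm:ilp} we have $\intl{\isig{n}}=\ilp$, and $\ilp\nvdash\principle{M}$. Hence there exist $\alpha_n,\beta_n,\gamma_n$ with
\[
\isig{n}\nvdash \alpha_n\rhd\beta_n \to \alpha_n\wedge\Box\gamma_n \rhd \beta_n\wedge\Box\gamma_n.
\]
Now one translates this counterexample down to $\ir{n}$ using the (\ea-provable) equivalences
\[
\alpha\rhd_{\isig{n}}\beta \;\leftrightarrow\; (\sigma_n\wedge\alpha)\rhd_{\ir{n}}(\sigma_n\wedge\beta)
\qquad\text{and}\qquad
\Box_{\isig{n}}\gamma \;\leftrightarrow\; \Box_{\ir{n}}(\sigma_n\to\gamma),
\]
where $\sigma_n$ is a single axiom for $\isig{n}$. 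Setting $p^*=\sigma_n\wedge\alpha_n$, $q^*=\sigma_n\wedge\beta_n$, $r^*=\sigma_n\to\gamma_n$ gives the desired realization over $\ir{n}$ immediately. This bypasses reflexivity considerations, cut-constructions, and any need to reprove or lift Visser's argument: the heavy lifting is outsourced to the already-known $\intl{\isig{n}}=\ilp$.
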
 

\begin{proof}
We define a realization $*$
such that 
$\ir{n} \nvdash (p\rhd q \rightarrow p\wedge \Box r \rhd q \wedge \Box r)^*$.

It is well-known that 
$\ir{n} \subsetneq
\isig{n}\subsetneq \ir{n+1}$ and that, for every  
$n{\geq} 1$, $\isig{n}$ is finitely axiomatized.
Let $\sigma_n$ be the single sentence axiomatizing $\isig{n}$.
It is also known that (for $n\geq 1$) 
$\intl{\isig{n}}=\ilp$ and that
$\ilp \nvdash p\rhd q \rightarrow p\wedge \Box r \rhd q \wedge \Box r$.
Thus, for any $n{\geq} 1$ we can find $\alpha_n ,\beta_n$ and $\gamma_n$
such that 
\[
\isig{n}\nvdash \alpha_n\rhd\beta_n\rightarrow 
\alpha_n\wedge \Box \gamma_n \rhd \beta_n \wedge \Box \gamma_n.
\]
Note that 
\[
\ea \vdash \alpha_n\rhd_{\isig{n}}\beta_n \leftrightarrow 
\sigma_n \wedge \alpha_n\rhd_{\ir{n}} \sigma_n \wedge \beta_n
\]
and 
\[
\ea \vdash \Box_{\isig{n}}\gamma_n \leftrightarrow 
\Box_{\ir{n}}(\sigma_n \rightarrow \gamma_n).
\]
Thus, we
have
\[
\ir{n}\nvdash
\sigma_n \wedge \alpha_n\rhd \sigma_n \wedge \beta_n \rightarrow
\sigma_n \wedge \alpha_n \wedge \Box (\sigma_n \rightarrow \gamma_n)
\rhd 
\sigma_n \wedge \beta_n \wedge \Box (\sigma_n \rightarrow \gamma_n)
\]
and we can take $p^*=\sigma_n \wedge \alpha_n$, $q^*=\sigma_n \wedge \beta_n $ 
and $r^*=\sigma_n \rightarrow \gamma_n$.
\end{proof}

We see that the realizations used in the proof of Lemma \ref{lemm:vetgeenM}
get higher and higher complexities. The complexity is certainly 
higher than $\Sigma_2$. 

By Theorem 1 from \cite{BilDickJo} (Theorem 12.1.1 from \cite{Joo04}) we know that for $\alpha,\beta \in \Sigma_2$ we have 
\[
\pra \vdash (\alpha \rhd \beta) \to ((\alpha \wedge \Box \gamma)\rhd (\beta \wedge \Box \gamma))
\]
for any sentence $\gamma$.
This translates to $\restintl{\pra}{\Sigma_2}\vdash \principle{M}$ and indicates that an
arithmetical completeness proof for \intl{\pra} can not work with only 
$\Sigma_2$-realizations.

For 
$\ir{n}$, $n\geq 2$ we know that $\intl{\ir{n}}\subset \ilm$. This 
follows from the next lemma.


\begin{lemma}\label{restrict}
$\restintl{\ir{n}}{\Sigma_2}=\restintl{\ir{n}}{\Delta_{n+1}}=\ilm$
whenever $n\geq 2$.
\end{lemma}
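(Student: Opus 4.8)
\textbf{Proof plan for Lemma \ref{restrict}.}

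The plan is to prove the chain of equalities $\restintl{\ir{n}}{\Sigma_2}=\restintl{\ir{n}}{\Delta_{n+1}}=\ilm$ by sandwiching. First I would recall from Theorem \ref{theorem:ReflexiveExtensionsOfRules} that $\ir{n}$ is reflexive and, moreover, remains reflexive under extension by $\Sigma_{n+1}$ sentences. This is the crucial structural fact: it allows us to invoke the Orey--H\'ajek characterization (Theorem \ref{theorem:OH}), so that for realizations $*$ taking values in $\Delta_{n+1}$ (in particular in $\Sigma_2\subseteq\Delta_{n+1}$ for $n\geq 2$), the interpretability statement $(A\rhd B)^*$ is $\ea$-provably equivalent to the $\Pi_1$-conservativity statement $(A^*\rhd_{\Pi_1}B^*)$, because $\ir{n}+A^*$ is still reflexive. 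Hence on such realizations $\rhd_{\ir{n}}$ and $\rhd_{\Pi_1}$ coincide provably, and $\restintl{\ir{n}}{\Gamma}$ equals the $\Pi_1$-conservativity logic of $\ir{n}$ restricted to $\Gamma$-realizations, for $\Gamma\in\{\Sigma_2,\Delta_{n+1}\}$.

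Next I would handle the two inclusions between $\ilm$ and $\restintl{\ir{n}}{\Gamma}$. For $\ilm\subseteq\restintl{\ir{n}}{\Gamma}$: $\ir{n}$ is a sound theory extending $\textsf{I}\Pi_1^-$ (as remarked in the text, $\pra=\ir{1}$ extends $\textsf{I}\Pi_1^-$, and the same holds a fortiori for $\ir{n}$), so by Theorem \ref{theorem:PiILM} the full $\Pi_1$-conservativity logic of $\ir{n}$ is $\ilm$; since $\rhd_{\ir{n}}$ agrees with $\rhd_{\Pi_1}$ on $\Gamma$-realizations by the previous paragraph, every theorem of $\ilm$ is provable in $\ir{n}$ under every $\Gamma$-realization, giving the inclusion. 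For the reverse inclusion $\restintl{\ir{n}}{\Delta_{n+1}}\subseteq\ilm$ (which is the sharper of the two, since $\Sigma_2\subseteq\Delta_{n+1}$ and Lemma \ref{lemma:upperboundmethod} already gives $\restintl{\ir{n}}{\Delta_{n+1}}\subseteq\restintl{\ir{n}}{\Sigma_2}$): I would use that Theorem \ref{theorem:PiILM} is really a completeness result whose witnessing realizations can, by Remark \ref{remark:Sigma2Subs} and the footnote there, be taken to lie in $\Delta_2(\textsf{I}\Pi_1^-)\subseteq\Sigma_2\subseteq\Delta_{n+1}$. So if $A\notin\ilm$ there is a $\Sigma_2$ (hence $\Delta_{n+1}$) realization $*$ with $\ir{n}\nvdash(A^*)$ where $\rhd$ is read as $\rhd_{\Pi_1}$; by Orey--H\'ajek this is the same as reading $\rhd$ as $\rhd_{\ir{n}}$, so $A\notin\restintl{\ir{n}}{\Delta_{n+1}}$. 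Combining, all three logics coincide.

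The main obstacle I anticipate is the careful bookkeeping of complexity classes and the formalizability of Orey--H\'ajek. One has to check that when $*$ ranges over $\Delta_{n+1}$ the theory $\ir{n}+A^*$ is genuinely reflexive so that Theorem \ref{theorem:OH} applies with the equivalence being $\ea$-provable, which is exactly what Theorem \ref{theorem:ReflexiveExtensionsOfRules} is for (note $\Delta_{n+1}\subseteq\Sigma_{n+1}$ up to provable equivalence, so extensions by such sentences stay reflexive); and that the completeness direction of Theorem \ref{theorem:PiILM} really delivers realizations of the claimed low complexity uniformly, which is the content of Remark \ref{remark:Sigma2Subs}. A secondary point requiring care is that $n\geq 2$ is used precisely to ensure $\Sigma_2\subseteq\Delta_{n+1}$ up to equivalence (so the chain of restrictions is monotone in the right direction) and that $\Sigma_2$-realizations are themselves $\Delta_{n+1}$; for $n=1$ this would collapse differently, which is consistent with the fact that $\intl{\ir{1}}=\intl{\pra}$ is exactly the open case and is known \emph{not} to be $\ilm$. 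Once these complexity and reflexivity checks are in place, the rest is the routine sandwich argument sketched above.
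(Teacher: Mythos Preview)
Your plan is essentially the same as the paper's and the main ingredients are all correctly identified: Orey--H\'ajek to switch between $\rhd$ and $\rhd_{\Pi_1}$, Theorem~\ref{theorem:ReflexiveExtensionsOfRules} to justify reflexivity, Theorem~\ref{theorem:PiILM} for soundness of \ilm, and Remark~\ref{remark:Sigma2Subs} to get $\Sigma_2$ witnessing realizations for completeness. The sandwich $\ilm\subseteq\restintl{\ir{n}}{\Delta_{n+1}}\subseteq\restintl{\ir{n}}{\Sigma_2}\subseteq\ilm$ is exactly what the paper proves.

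There is one point where your sketch is too quick and which the paper handles by an explicit induction. You write that ``$\ir{n}+A^*$ is still reflexive'' because $*$ takes values in $\Delta_{n+1}\subseteq\Sigma_{n+1}$. But $A$ here is an arbitrary \formil-formula, possibly with nested $\rhd$'s, and under the interpretability reading $A^*_\rhd$ is a priori $\Sigma_3$ or worse; Theorem~\ref{theorem:ReflexiveExtensionsOfRules} does not apply directly. The paper therefore proves, by induction on the build-up of $A$, simultaneously that $\ir{n}\vdash A^*_{\Pi_1}\leftrightarrow A^*_{\rhd}$ \emph{and} that $A^*_{\Pi_1}$ has complexity at most $\Delta_{n+1}$. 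The second clause is what lets you re-enter Orey--H\'ajek at the next nesting level, and it is precisely here that $n\geq 2$ is used: the inductive step $(B\rhd C)^*_{\Pi_1}$ is a $\Pi_2$ sentence, and one needs $\Pi_2\subseteq\Delta_{n+1}$. Your remark that $n\geq 2$ is needed ``to ensure $\Sigma_2\subseteq\Delta_{n+1}$'' is a consequence of this, but not the operative reason in the proof. Once you add this inductive bookkeeping your argument becomes the paper's.
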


\begin{proof}
We shall use that the logic 
of $\Pi_1$-conservativity for theories containing $\textsf{I}\Pi_1^-$ is 
\ilm as mentioned in Theorem \ref{theorem:PiILM}. 
 
If, for two classes of sentences we have 
$X\subseteq Y$, then $\restintl{T}{Y} \subseteq \restintl{T}{X}$. 
We will thus show that 
$\restintl{\ir{n}}{\Sigma_2}\subseteq \ilm$ 
and
$\ilm \subseteq \restintl{\ir{n}}{\Delta_{n+1}}$.

First, we prove by induction on the complexity of a modal formula $A$
that 
for all $ * {\in} \Delta_{n+1}\ \ \ir{n}\vdash A^*_{\Pi_1}
\leftrightarrow A^*_{\rhd}$ and that the 
logical complexity of $A^*_{\Pi_1}$ is at most $\Delta_{n+1}$.
The basis is trivial and the only interesting induction step is 
whenever $A= (B\rhd C)$. We reason in $\ir{n}$:

\[
\begin{array}{cl}

(B\rhd C)^*_{\rhd}  		&	\leftrightarrow_{\mbox{def.}}\\

\ir{n} + B^*_{\rhd} \rhd \ir{n} + C^*_{\rhd}  
&\leftrightarrow_{\mbox{i.h.}}\\

\ir{n} + B^*_{\Pi_1} \rhd \ir{n} + C^*_{\Pi_1}
&\leftrightarrow_{\mbox{Orey-H\'ajek}}\\

\ir{n} +B^*_{\Pi_1}  \rhd_{\Pi_1} \ir{n} + C^*_{\Pi_1}
&\leftrightarrow_{\mbox{def.}}\\

(B\rhd C)^*_{\Pi_1}& \\

\end{array}
\]

Note that we have access to the Orey-H\'ajek characterization as 
$B^*_{\Pi_1}$ is at most of complexity $\Delta_{n+1}$ and thus 
$\ir{n} +B^*_{\Pi_1}$ is a reflexive theory by Theorem \ref{theorem:ReflexiveExtensionsOfRules}. Also note that 
$(B\rhd C)^*_{\Pi_1}$ is a $\Pi_2$-sentence and thus certainly 
$\Delta_{n+1}$ whenever $n\geq 2$.

If now $\ilm \vdash A$
then 
$\ir{n}\vdash  A^*_{\Pi_1}$
and thus whenever $* \in \Delta_{n+1}$,
$\ir{n}\vdash  A^*_{\rhd}$ and 
$\ilm \subseteq \restintl{\ir{n}}{\Delta_{n+1}}$.

If $\ilm \nvdash A$ then by Remark \ref{remark:Sigma2Subs} for some $* \in \Sigma_2$ we
have $\ir{n}\nvdash  A^*_{\Pi_1}$ whence 
$\ir{n}\nvdash  A^*_{\rhd}$. We may conclude that  
$\restintl{\ir{n}}{\Sigma_2}\subseteq \ilm$.
\end{proof}

\begin{theorem}
$\intl{\pra}\subset \ilm$
\end{theorem}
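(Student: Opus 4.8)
The plan is to prove the inclusion $\intl{\pra}\subseteq\ilm$ and then note that it is proper. Properness is already in hand: since $\pra=\ir{1}$, Lemma~\ref{lemm:vetgeenM} with $n=1$ gives $\intl{\pra}\nvdash\principle{M}$, whereas $\principle{M}$ is an axiom of $\ilm$; so $\intl{\pra}\neq\ilm$ (alternatively, one invokes Visser's non-derivability of $A\rhd\Diamond B\to\Box(A\rhd\Diamond B)$ in $\intl{\pra}$, recorded above). The substance is therefore the inclusion, which I would obtain via the upper-bound method: by the evident interpretability-logic analogue of Lemma~\ref{lemma:upperboundmethod} one has $\intl{\pra}\subseteq\restintl{\pra}{\Gamma}$ for every class $\Gamma$ of arithmetical sentences, so it suffices to exhibit a $\Gamma$ with $\restintl{\pra}{\Gamma}\subseteq\ilm$.

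The natural choice, copying the proof of Lemma~\ref{restrict}, is to take $\Gamma$ to be the class of $\Delta_2(\pra)$-sentences. Given $\ilm\nvdash A$, Theorem~\ref{theorem:PiILM} (the $\Pi_1$-conservativity logic of $\pra$ is $\ilm$) together with Remark~\ref{remark:Sigma2Subs} and its footnote supplies a realization $*$ sending each variable to a $\Delta_2(\pra)$-sentence with $\pra\nvdash A^{*}_{\Pi_1}$, where $A^{*}_{\Pi_1}$ reads $\rhd$ as $\Pi_1$-conservativity over $\pra$. One then shows $\pra\vdash A^{*}_{\rhd}\leftrightarrow A^{*}_{\Pi_1}$ by induction on $A$, exactly as in Lemma~\ref{restrict}: at the only nontrivial step $A=(B\rhd C)$, rewrite $(B\rhd C)^{*}_{\rhd}$ as $\pra+B^{*}_{\rhd}\rhd\pra+C^{*}_{\rhd}$, apply the inductive hypothesis to the two sides, and pass to $\rhd_{\Pi_1}$ by the Orey--H\'ajek Theorem~\ref{theorem:OH}. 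The Orey--H\'ajek step requires $\pra+B^{*}_{\Pi_1}$ and $\pra+C^{*}_{\Pi_1}$ to be reflexive, which by Theorem~\ref{theorem:ReflexiveExtensionsOfRules} holds provided $B^{*}_{\Pi_1}$ and $C^{*}_{\Pi_1}$ are (provably in $\pra$) $\Sigma_2$; this complexity bound must be maintained by the induction. From $\pra\nvdash A^{*}_{\Pi_1}$ and the equivalence one reads off $\pra\nvdash A^{*}_{\rhd}$, hence $A\notin\restintl{\pra}{\Gamma}$.

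The hard part — and the reason this does not simply fall under Lemma~\ref{restrict} — will be exactly this complexity bookkeeping for $\ir{1}$. In Lemma~\ref{restrict} one takes $\ir{n}$ with $n\geq 2$ and $\Delta_{n+1}$-realizations; there a $\Pi_1$-conservativity statement $\pra+\alpha\rhd_{\Pi_1}\pra+\beta$ is prima facie only $\Pi_2$, but $\Pi_2\subseteq\Delta_{n+1}$ when $n\geq 2$, so the induction closes with room to spare. For $n=1$, $\Pi_2\not\subseteq\Delta_2$, so the class of $\Delta_2(\pra)$-sentences is not manifestly closed under $(\alpha,\beta)\mapsto(\pra+\alpha\rhd_{\Pi_1}\pra+\beta)$, and one cannot rescue the argument by enlarging $\Gamma$ to $\Pi_2$- or $\Delta_3$-realizations: that would forfeit the reflexivity input, since Theorem~\ref{theorem:ReflexiveExtensionsOfRules} only delivers reflexivity of $\pra$ extended by a $\Sigma_2$-sentence. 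So the key new ingredient I would have to supply is that $\Pi_1$-conservativity \emph{between finite $\Sigma_2$-extensions of the reflexive theory $\pra$} is, over $\pra$, equivalent to a $\Delta_2(\pra)$-sentence — trading the defining $\forall\,\Pi_1(\dots)$ for a uniform-provable-consistency formulation and using reflexivity of both extensions to bring it down to $\Sigma_2$. Granting this one lemma, the induction of Lemma~\ref{restrict} transfers verbatim to $\ir{1}$, giving $\restintl{\pra}{\Delta_2(\pra)}\subseteq\ilm$ and hence $\intl{\pra}\subseteq\ilm$; with properness, $\intl{\pra}\subset\ilm$. I expect this complexity-reduction lemma, rather than any of the modal manipulation, to be where the real difficulty lies.
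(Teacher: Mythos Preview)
Your route is the paper's route: obtain $\intl{\pra}\subseteq\ilm$ by running the argument of Lemma~\ref{restrict} at $n=1$ (the paper phrases it as $\restintl{\ir{1}}{\Sigma_2}\subseteq\ilm$ rather than your $\Delta_2(\pra)$, but that is immaterial), and combine with the already-established properness. The paper's proof is literally two sentences: it simply \emph{asserts} that the proof of Lemma~\ref{restrict} already yields the inclusion half for $n=1$, without re-running the induction or discussing the complexity bookkeeping.

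The difficulty you isolate is genuine and is not addressed in the paper. In the induction of Lemma~\ref{restrict}, the Orey--H\'ajek step for $B\rhd C$ needs $\ir{n}+B^{*}_{\Pi_1}$ and $\ir{n}+C^{*}_{\Pi_1}$ reflexive, which Theorem~\ref{theorem:ReflexiveExtensionsOfRules} supplies only for $\Sigma_{n+1}$-extensions; since $(D\rhd E)^{*}_{\Pi_1}$ is $\Pi_2$, for $n=1$ the induction as written does not close once a $\rhd$ is nested inside another $\rhd$. The paper's two-line proof simply does not engage with this. Your proposed patch --- showing that $\Pi_1$-conservativity between $\Sigma_2$-extensions of $\pra$ is $\Delta_2(\pra)$ --- would indeed repair the induction, and you are right to locate the real work there; note, however, that nothing in the paper supplies such a lemma, and the obvious formulations of $\rhd_{\Pi_1}$ and $\rhd$ are $\Pi_2$ and $\Sigma_3$, so a $\Sigma_2$ equivalent is not immediate. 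An alternative to your complexity-reduction lemma would be to argue directly that $\pra$ plus the relevant $\Pi_2$ sentences remains reflexive, bypassing the $\Sigma_2$ bound; either way, you have correctly identified the one point at which the paper's terse proof leaves something to be filled in.
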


\begin{proof}
Although the proof of Lemma \ref{restrict} does not give us that $\restintl{\ir{1}}{\Sigma_2}= \ilm$, it does give us that $\restintl{\ir{1}}{\Sigma_2}\subseteq \ilm$. By earlier observations we saw that 
$\intl{\pra}\neq \ilm$.
\end{proof}

\section{Future research}
We have seen that adding a constant for \isig{1} to \pra is sufficient to obtain a non-trivial provability logic. By a Theorem of Leivant it is known that 
$\isig{1} \equiv <2>_{\ea}\top$. An interesting fragment to consider next for \pra would be the closed fragment together with the set of constants 
\[
\{   (<1>_{\ea}<2>_{\ea})^n\top \mid n\in \omega\}
\]
or variants thereof.


\section{Acknowledgements}
We would like to thank Lev Beklemishev, Dick de Jongh and Albert Visser for fruitful comments and discussions.



\begin{thebibliography}{restr}
   
\bibitem{Bek96}
	L.D. Beklemishev. Bimodal logics for extensions of arithmetical theories. \emph{Journal of Symbolic Logic}, 61(1):91-124.
   
\bibitem{Bek:ReflArith}
	L.D. Beklemishev. Reflection principles and provability algebras in formal arithmetic. \emph{Russian Mathematical Surveys} 60(2): 197-268 2005.   
   
 \bibitem{Bek09}
	L.D. Beklemishev. Kripke semantics for provability logic \textbf{GLP}. \emph{Annals of Pure and Applied Logic}, In press.

\bibitem{BBI09}
	L.D. Beklemishev, G. Bezhanishvili, and T. Icard. On topological semantics of \textbf{GLP}. Forthcoming in R. Schindler, editor, \emph{Ways of Proof Theory}, Ontos Verlag, 2010.

\bibitem{BJV}
	L.D. Beklemishev, J.J. Joosten, and M. Vervoort. A Finitary Treatment of the Closed Fragment of Japaridze's Provability Logic. \textit{Journal of Logic and Computation}, 15(4):447-463, 2005.

\bibitem{BeklVisser2005}
	L.D. Beklemishev, A. Visser. On the Limit Existence Principles in 
                           Elementary Arithmetic and {$\Sigma^0_n$}-consequences
                           of Theories. \emph{Annals of Pure and Applied Logic}, 136(1-2): 56--74, 2005.


\bibitem{bera:inte90}	
	A. Berarducci. The interpretability logic of {P}eano arithmetic. \emph{Journal of Symbolic Logic}, 55:1059--1089, 1990.

\bibitem{BerarducciVerbrugge}
	A. Berarducci, and R. Verbrugge. On the provability logic of bounded arithmetic. \emph{Annals of Pure and Applied Logic}, 61(1-2): 75-93, 1993.
	
\bibitem{BilDickJo}
	M. Bilkova, D. de Jongh, J. J. Joosten. Interpretability in \pra. \emph{Annals of Pure and Applied Logic}, 161: 128--138, 2009.
	
\bibitem{Bla01}
	P. Blackburn, M. de Rijke, and Y. Venema. Modal Logic, Cambridge University Press, Cambridge, 2001.

\bibitem{Bool93}
	G. Boolos. The Logic of Provability. Cambridge University Press, ISBN 0-521-43342-8, 1993.

\bibitem{Buss86}
	S. Buss, Bounded Arithmetic, PhD Thesis. Bibliopolis, Napoli, 1986.

\bibitem{GorisJoostenNewPrinc}
	E. Goris, and J. J. Joosten. A new principle in the interpretability logic of all reasonable arithmetical theories. \emph{Logic Journal of the IGPL}, accepted for publication, 2010.

\bibitem{HM90}
	P. H\'ajek, and F. Montagna. The logic of $\Pi_1$-conservativity. \emph{Archiv f\"ur Mathematische Logik und Grundlagenforschung}, 30: 113-123, 1990.

\bibitem{HM92}
	P. H\'ajek, and F. Montagna. The logic of $\Pi_1$-conservativity continued.  \emph{Archiv f\"ur Mathematische Logik und Grundlagenforschung}, 32: 57-63, 1992.

\bibitem{haje:norm91}
	P. Hajek, and V. \v{S}vejdar. A note on the normal form of closed formulas of interpretability logic. \emph{Studia Logica}, 50: 25-38, 1991.
	
\bibitem{Ica09}
	T. Icard. A topological study of the closed fragment of \textbf{GLP}. \emph{Journal of Logic and Computation}, In press.

\bibitem{Ica07}
	T. Icard. Towards an Alternative Proof of Solovay's Arithmetical Completeness Therem. In \emph{Proceedings of the 12$^{\textit{th}}$ ESSLLI Student Session}, Dublin, 2007.
	
\bibitem{Ign93}
	K. Ignatiev. On Strong Provability predicates and the associated modal logics. \emph{Journal of Symbolic Logic}, 58:249-290, 1993.
 	
\bibitem{Jap86}
	G. Japaridze. \emph{Modal Logical Means of Investigation of Provability}. Ph.D. thesis (in Russian), Moscow State University, 1986.

\bibitem{JapJongh}
	J. Japaridze, and D. de Jongh. The Logic of Provability. In S. R. Buss, editor, \emph{Handbook of Proof Theory}. Studies in Logic and the Foundations of Mathematics, 137: 475-546. Elsevier, Amsterdam, 1998.

\bibitem{JongJumeletMontagna} D. de Jongh, M. Jumelet, and F. Montagna. On the Proof of Solovay's Theorem. \emph{Studia Logica} 50(1): 51-69, 1991.

\bibitem{Joo03}
	J. J. Joosten. Formalized Interpretability in Primitive Recursive Arithmetic. \emph{Proceedings of the ESSLLI student session}, Vienna, 2003.
	
  \bibitem{Joo04}
 	J.J. Joosten. \emph{Intepretability Formalized}, Ph.D. thesis, Department of Philosophy, University of Utrecht, 2004.

\bibitem{Joo05}
	J.J. Joosten. The closed fragment of the intepretability logic of \textsf{PRA} with a constant for I$\Sigma _1$. \emph{Notre Dame Journal of Formal Logic}, 46(2):127-146, 2005.
	
\bibitem{joo:prol00}
	J.J. Joosten and A. Visser. The interpretability logic of \emph{all} reasonable arithmetical
  theories. \emph{Erkenntnis}, 53(1--2):3--26, 2000.

	
\bibitem{Loeb55}
	M. L\"ob. Solution of a problem of Leon Henkin. \emph{Journal of Symbolic Logic}, 20:115-118, 1955.	

\bibitem{Min72}
	G. Mints. Quantifier-free and one-quantifier systems. \emph{Journal of Soviet Mathematics}, 1:71-84, 1972.
	
\bibitem{Par72}
	C. Parsons. On $n$-quantifier induction. \emph{Journal of Symbolic Logic}, 37:466-482, 1972.

\bibitem{Sambin}
	G. Sambin. An effective fixed point theorem in intuitionistic diagonalizable algebras. \emph{Studia Logica} 35: 345 361, 1975.

\bibitem{shav:logi88}
	V.~Shavrukov. The logic of relative interpretability over {P}eano arithmetic (in {R}ussian). Technical Report Report No.5, Steklov Mathematical Institute, Moscow, 1988.

\bibitem{Shav97} 
	V. Y. Shavrukov. Interpreting reflexive theories in finitely many axioms. \emph{Fundamenta Mathematicae}, 152:99-116,1997.

\bibitem{Sol76}
	R. M. Solovay. Provability interpretations of modal logic. \emph{Israel Journal of Mathematics}, 28: 33-71, 1976.

\bibitem{Tarski}
	A. Tarski, A. Mostowski, and R.M. Robinson. \emph{Undecidable Theories}. North Holland Publishing Company, 1953.
		
\bibitem{viss:81}
	A. Visser. A propositional logic with explicit fixed points. \emph{Studia Logica}, 40:155-175, 1981.
	
\bibitem{viss:inte90}
	A. Visser. Interpretability logic. In \emph{P.P. Petkov, editor. Mathematical logic, {P}roceedings of the {H}eyting 1988 summer
  school in {V}arna, {B}ulgaria.} Plenum Press, Boston: 175--209, 1990. 


\bibitem{Visser97}
	A. Visser, An overview of interpretability logic. In M. Kracht, M. de Rijke and H. Wansing, editors, \emph{Advances in Modal Logic '96}: 307-359. CSLI Publications, Stanford, CA, 1997.


   \end{thebibliography}
\end{document}